\numberwithin{table}{section}
\numberwithin{figure}{section}
\numberwithin{equation}{section}
\definecolor{darkblue}{rgb}{.2, 0.2,.8}
\definecolor{darkgreen}{rgb}{0,0.5,0.3}
\definecolor{darkred}{rgb}{.8, .1,.1}
\newcommand{\bfalp}{\vect{\alpha}}
\newcommand{\bfT}{\mat{T}}
\newcommand{\bfU}{\mat{U}}
\newcommand{\bfQ}{\mat{Q}}
\newcommand*\dd{\mathop{}\!\mathrm{d}}
\renewcommand{\P }{{\mathbb P}}
\newtheorem{lemma}{Lemma}[section]
\newtheorem{theorem}[lemma]{Theorem}
\newtheorem{definition}[lemma]{Definition}
\newtheorem{example}[lemma]{Example}
\newtheorem{remark}{Remark}[section]
\newcommand{\vect}[1]{\pmb{#1}}
\newcommand{\mat}[1]{\bm{\bm #1}}
\title{\bfseries Assessing continuous common-shock risk through matrix distributions}
\author[1]{Martin Bladt\thanks{Corresponding author. Email: martinbladt@math.ku.dk}}
\author[2]{Oscar Peralta}
\author[3]{Jorge Yslas}
\affil[1]{Department of Mathematical Sciences, University of Copenhagen, Denmark}
\affil[2]{Department of Actuarial Science and Insurance, ITAM, Mexico}
\affil[3]{Department of Mathematical Sciences, University of Liverpool, United Kingdom}
\date{} 
\begin{document}

\maketitle
\begin{abstract}
We introduce a class of continuous-time bivariate phase-type distributions for modeling dependencies from common shocks. The construction uses continuous-time Markov processes that evolve identically until an internal common-shock event, after which they diverge into independent processes. We derive and analyze key risk measures for this new class, including joint cumulative distribution functions, dependence measures, and conditional risk measures. Theoretical results establish analytically tractable properties of the model. For parameter estimation, we employ efficient gradient-based methods. Applications to both simulated and real-world data illustrate the ability to capture common-shock dependencies effectively. Our analysis also demonstrates that common-shock continuous phase-type distributions may capture dependencies that extend beyond those explicitly triggered by common shocks.
\end{abstract}

\section{Introduction}

Risk dependencies play a central role in insurance risk assessment. Multiple lines of business often experience correlated losses due to shared exposure to external factors or shocks. These common factors may include environmental hazards, regulatory changes, or catastrophic events. For example, natural disasters such as hurricanes or earthquakes can simultaneously trigger substantial claims across different insurance portfolios. Explicitly modeling such scenarios requires capturing both the initial shared influence and the subsequent independent evolution of each affected process. Representing these dependencies accurately is essential in practice for aggregate risk management, setting appropriate insurance premiums or managing portfolios.

Continuous-time Markov processes provide a versatile framework for modeling stochastic dynamics. Among these, phase-type ($\mbox{PH}$) distributions are particularly useful. Defined as the distribution of absorption times in finite-state continuous-time Markov chains, their underlying Markovian structure enables a rich theoretical framework with closed-form expressions for probability density functions, cumulative distribution functions, and other key quantities \cite{neuts1981use,bladt2017matrix}. This has made them popular in reliability theory, queueing systems, and risk modeling, and has led to numerous extensions to multivariate settings. One early approach for multivariate risk modeling using continuous-time Markov processes was provided by \cite{assaf1984multivariate}, where vectors of absorption times into distinct subsets of states was examined. In such models, a continuous-time Markov chain is designed with multiple collections of absorbing states, and each component of the multivariate vector denotes the time to absorption in one of these collections. A second method was proposed by \cite{kulkarni1989new}, who introduced vector-valued accumulated rewards. In this setting, the Markov chain accumulates rewards at specified rates while visiting particular states, and the total reward vector upon absorption follows a multivariate $\mbox{PH}$ distribution. {Recent advances in multivariate phase-type distributions have further expanded their modeling capabilities. The multivariate phase-type by mixtures (mPH) construction of \cite{bladt2023tractable} generates dependence through a shared initial state, while subsequent developments have incorporated mixture-of-experts structures for both frequencies and severities \citep{bladt2023robust,bladt2023PHMOEseverity}. These approaches capture flexible marginal and dependence behaviors, but they do not represent scenarios where a single triggering event simultaneously affects multiple components, which is a feature central to common-shock modeling.}

Capturing dependencies is essential for a realistic assessment of joint risk behavior, particularly in insurance applications where multiple risk components interact. The presence of dependencies has inspired a wide range of actuarial research, from models of multivariate claim counts \cite{Shi2014multiBNcounts, Pechon2018MultiClaimCount, Fung2019MOEcorrelatedcounts} and aggregate losses \cite{lee2012MixEr, willmot2015MixEr}, to multivariate risk measures \cite{Cossette2016MultiTVaR, landsman2016multivariate} and ruin probabilities \cite{Badila2015multiruin, Albrecher2022multiruinLaguerre}. While copulas remain a popular tool for modeling dependence \cite[see e.g.][]{Frees1998copula}, their flexibility often comes at the cost of analytical tractability in continuous-time settings. In contrast, structured stochastic models that encode dependence directly in the underlying dynamics can yield both insight and closed-form results. This has motivated the use of common-shock frameworks across various actuarial domains, including credit risk \cite{lindskog2003common}, capital modeling \cite{furman2010multivariate}, collective risk \cite{meyers2007common}, mortality and longevity modeling \cite{alai2013lifetime, alai2016multivariate}, { claims reserving \cite{avanzi2018common, avanzi2021unbalanced, taylor2023auto}, and optimal reinsurance \cite{yuen2015optimal, ceci2022optimal}}. These models reflect the need to incorporate latent shared risk factors that simultaneously impact multiple components. 

Our work contributes to this growing body of literature by introducing a new family of latent Markovian models that explicitly encode common shocks while retaining the analytical strengths of the $\mbox{PH}$ distribution framework. Concretely, this paper proposes a novel class of continuous-time bivariate $\mbox{PH}$ distributions specifically designed to capture explicit dependencies induced by common shocks, complementing recent developments for discrete-time models \cite{Bladt2025}. The proposed model uses continuous-time Markov processes that initially share a common evolution phase, representing the common shock, and subsequently diverge into independent processes. Dependencies may arise both from the occurrence of the common shock itself and from the shared state at the point of divergence. We name this new class of distributions \emph{common-shock continuous $\mbox{PH}$ distributions} ($\mbox{CSPH}$). The $\mbox{CSPH}$ framework directly addresses practical challenges in insurance risk modeling, particularly when multiple lines of business share latent risk factors. 

As a primary goal in risk management is quantifying the dependence structure between multiple perils, we derive and analyze relevant dependence and risk measures within the $\mbox{CSPH}$ framework. We also provide explicit expressions for joint survival probabilities (the likelihood of simultaneous large claims),  dependence measures such as {Pearson's correlation}, and conditional risk measures (expected losses conditional on certain events). The Markovian structure and matrix-analytic methods is particularly well-suited for this task, as the derivation of explicit analytical results is often available for our model.

Estimating the parameters of $\mbox{CSPH}$ distributions is also possible, although it presents significant challenges, particularly since we are dealing with incomplete data and possibly high-dimensional parameter spaces. Traditional estimation methods, such as the expectation maximization (EM) algorithm, have been effective in simpler $\mbox{PH}$ distribution contexts, treating $\mbox{PH}$ data as partially observed jump processes \citep[cf.][]{asmussen1996fitting}. However, the presence of common shocks significantly complicates computations, often leading to slower convergence and numerical instability. To address these limitations, we instead propose using approximate gradient-based methods for parameter estimation. These methods are computationally efficient, relatively straightforward to implement, parallelizable and scalable, making them well-suited for the $\mbox{CSPH}$ model.

The paper is organized as follows. Section \ref{sec:ph} provides essential background on univariate and multivariate $\mbox{PH}$ distributions, motivating the need for specialized models for common-shock scenarios. In Section \ref{sec:common-shocks}, we formally introduce the bivariate common-shock $\mbox{PH}$ ($\mbox{CSPH}$) distribution and discuss its construction. Section \ref{sec:model-properties} is dedicated to deriving its fundamental distributional properties, including marginal and joint distributions. Further theoretical properties, such as denseness, the size-biased Esscher transform, and a master formula for moment calculations, along with several risk measures derived from it, are explored in Section \ref{sec:further-model-properties}. Section \ref{sec:conditional_dependence} delves into conditional dependence measures, analyzing the dependence structure of the residual components after the common shock. Finally, Section \ref{sec:inference_examples} discusses parameter estimation using gradient-based methods and illustrates the model's application through numerical examples with real-world and synthetic data.

\section{$\mbox{PH}$ Distributions}\label{sec:ph}

This section details the construction of the univariate and multivariate $\mbox{PH}$ distributions referenced in the introduction, and can be considered as background material.

\subsection{Univariate $\mbox{PH}$ Distributions}

A (continuous) univariate $\mbox{PH}$ distribution describes the distribution of the time until absorption in a finite-state continuous-time Markov chain with one absorbing state. Concretely, consider a time-homogeneous continuous-time Markov chain with state space \(\{1,2,\dots,p,p+1\}\), where states \(1,2,\dots,p\) are transient and state \(p+1\) is absorbing. The chain begins in one of the transient states according to an initial distribution and eventually transitions to the absorbing state. The time elapsed until absorption is termed a $\mbox{PH}$-distributed random variable.

Formally, let \(\bm{\alpha}\) be a \(1 \times p\)-dimensional probability vector representing the initial distribution over the \(p\) transient states, and let \(\bm{T}\) be a \(p \times p\) subintensity matrix governing transitions among these transient states. By construction, \(\bm{T}\) has nonpositive diagonal entries and nonnegative off-diagonal entries, with each row summing to a nonpositive value. The off-diagonal entries correspond to transition rates between distinct transient states, while the negative diagonal entries represent exit rates from each transient state.

Let \(\bm{1}\) denote a \(p \times 1\) column vector of ones. The absorption rate vector is then given by \(\bm{t} := -\bm{T}\bm{1}\), whose entries specify the rates at which the process leaves the transient states to enter the absorbing state. Starting from the initial distribution \(\bm{\alpha}\), define \(X\) as the time to absorption. The distribution of \(X\) is referred to as a $\mbox{PH}$ distribution with parameters \((\bm{\alpha}, \bm{T})\), denoted
\[
X \sim \mbox{PH}(\bm{\alpha}, \bm{T}).
\]

Standard matrix-analytic arguments \citep[see][]{bladt2017matrix} yield the cumulative distribution function and probability density function for \(X\). These are
\begin{align*}
F_X(t) &= 1 - \bm{\alpha} e^{\bm{T} t} \bm{1}, \quad t \ge 0, \\
f_X(t) &= \bm{\alpha} e^{\bm{T} t}\bm{t}, \quad t \ge 0, \label{eq:ph_pdf}
\end{align*}
where the matrix exponential of a matrix $\mat{M}$ is defined by the power series
\begin{align*}
	e^{\mat{M}} = \sum_{n= 0}^{\infty} \frac{\mat{M}^{n}}{n!} .
\end{align*}

The moment generating function of \(X\) also admits a closed-form expression as
\[
M_X(s) = \mathbb{E}[e^{sX}] = \bm{\alpha}(-\bm{T} - s\bm{I})^{-1}\bm{t}, \quad \text{for } s \text{ in a neighborhood of } 0,
\]
where $\bm{I}$ denotes the identity matrix of appropriate dimension.

A fundamental property of $\mbox{PH}$ distributions is their denseness in the class of all nonnegative distributions with respect to weak convergence \citep[cf.][]{Asmussen2023}. In other words, any distribution supported on \([0,\infty)\) can be arbitrarily well approximated by a $\mbox{PH}$ distribution, provided that one is willing to use a sufficiently large number of phases \(p\). This flexibility, coupled with closed-form expressions and a Markovian interpretation, makes $\mbox{PH}$ distributions an effective modeling tool in a wide range of applications, including the lifetimes of intricate systems in insurance and finance, see for instance \cite{Ahn2012logPH,ALBRECHER202268}.

\subsection{Multivariate $\mbox{PH}$ Distributions}
As discussed in the introduction, extending $\mbox{PH}$ distributions to the multivariate realm is necessary for modeling dependent risks. While several constructions exist, this paper builds upon the structure of mixture-based models. A relevant construction for comparison is the multivariate $\mbox{PH}$ by mixtures ($\mbox{mPH}$) introduced in \cite{bladt2023tractable}.

In a bivariate $\mbox{mPH}$ model, a random vector $(Y_1, Y_2)$ is constructed by selecting an initial state $J_0 \in \{1, \dots, p\}$ from an initial distribution $\bm{\pi}_{\text{mPH}}$. Conditional on $J_0=j$, the components $Y_1$ and $Y_2$ are independent random variables from univariate $\mbox{PH}$ distributions, $Y_1 \sim \mbox{PH}(\bm{e}_j^\top, \bm{S}_1)$ and $Y_2 \sim \mbox{PH}(\bm{e}_j^\top, \bm{S}_2)$, where $\bm{e}_j^\top$ is the $j$-th standard basis vector. Dependence arises exclusively from the shared initial state. The resulting distribution is denoted $\mbox{mPH}(\bm{\pi}_{\text{mPH}}, \{\bm{S}_1, \bm{S}_2\})$. This construction yields explicit formulas for the joint cumulative distribution function and joint density:
\begin{align*}
F_{Y_1, Y_2}(y_1, y_2) &= \sum_{j=1}^{p} (\bm{\pi}_{\text{mPH}})_j \prod_{i=1}^{2} (1 - \bm{e}_j^\top e^{\bm{S}_i y_i} \bm{1}), \quad y_1, y_2 \ge 0, \\
f_{Y_1, Y_2}(y_1, y_2) &= \sum_{j=1}^{p} (\bm{\pi}_{\text{mPH}})_j \prod_{i=1}^{2} (\bm{e}_j^\top e^{\bm{S}_i y_i} \bm{s}_i), \quad y_1, y_2 \ge 0,
\end{align*}
{where} $(\bm{\pi}_{\text{mPH}})_j$ denotes the $j$-th component of $\bm{\pi}_{\text{mPH}}$ and $\bm{s}_i = -\bm{S}_i \bm{1}$. As will be shown, the dependence structure of our $\mbox{CSPH}$ model, conditional on the common shock, is related to this class.

\section{Bivariate Common-Shock $\mbox{PH}$ Distributions}\label{sec:common-shocks}

To model correlated failures in insurance and risk contexts, we incorporate a common-shock segment into the $\mbox{PH}$ framework. Following \cite{Bladt2025}, we adapt their discrete-time model to a continuous-time Markovian setting.

Consider two interconnected processes, \(J_1 = \{J_1(t)\}_{t \ge 0}\) and \(J_2 = \{J_2(t)\}_{t \ge 0}\), both evolving on a shared state space \(\mathcal{E} \cup \mathcal{S}\). States in \(\mathcal{E}\) represent the pre-shock phase, while states in \(\mathcal{S}\) represent the post-shock phase. While both processes remain in \(\mathcal{E}\), including the moment of first entry into \(\mathcal{S}\), they coincide with a joint process \(J_{1,2} = \{J_{1,2}(t)\}_{t \ge 0}\), which evolves according to a subintensity matrix of the block form
\[
\begin{pmatrix} \bm{T} & \bm{U} \\ \bm{0} & \bm{0} \end{pmatrix},
\]
where \(\bm{T}\) governs transitions within \(\mathcal{E}\), and \(\bm{U}\) governs transitions from \(\mathcal{E}\) to \(\mathcal{S}\). Here, and throughout the paper, \(\bm{0}\) denotes a matrix or vector of zeros of appropriate dimension. The condition \(\bm{T}\bm{1} + \bm{U}\bm{1} = \bm{0}\) guarantees that the process will eventually exit \(\mathcal{E}\) to a state in \(\mathcal{S}\).
The corresponding exit time is defined as
\[
\tau_{1,2} = \inf\{ t \ge 0 : J_{1,2}(t) \notin \mathcal{E} \},
\]
and represents the moment of the common shock. Note that this shock is not an exogenous or independent event, but rather emerges from the Markovian evolution.

We set \(J_1(t) = J_2(t) = J_{1,2}(t)\) for all \(t \le \tau_{1,2}\), so both processes evolve identically until the internal shock mechanism is triggered. Upon entering \(\mathcal{S}\), the joint process \(J_{1,2}\) remains absorbed in the state it reaches at time \(\tau_{1,2}\), since we focus on the dynamics up to the shock. At this point, each process \(J_1\) and \(J_2\) continues independently, starting from the common-shock state $K = J_{1,2}(\tau_{1,2})$. Each enters its own copy of the states \(\mathcal{S}\) and evolves under the subintensity matrices \(\bm{Q}_1\) and \(\bm{Q}_2\), respectively, until absorption into a final state outside $\mathcal{E} \cup \mathcal{S}$. The time spent by process $J_i$ in $\mathcal{S}$ after the shock is denoted $\tilde{\tau}_i$. Note that the total time until absorption for process $J_i$ starting from $\mathcal{E}$ would be $\tau_i = \tau_{1,2} + \tilde{\tau}_i$. This total time follows a standard univariate $\mbox{PH}$ distribution with initial distribution $(\bm{\alpha}, \bm{0})$ over $\mathcal{E} \cup \mathcal{S}$ and subintensity matrix
\[
\begin{pmatrix} \bm{T} & \bm{U} \\ \bm{0} & \bm{Q}_i \end{pmatrix}.
\]

The dependence between \(\tau_1\) and \(\tau_2\) arises from the shared time \(\tau_{1,2}\) and the common state $K$ entered at that time. To increase modeling flexibility, the construction allows each component to register the impact of the shock differently through scaling factors $a_1, a_2 > 0$. The variables of interest are defined as
\[
X_i = a_i \tau_{1,2} + \tilde{\tau}_i, \quad i=1,2.
\]
When components have different sensitivities to the shock, this structure accommodates asymmetric responses. When \(a_1 = a_2 = 1\), both components are equally affected, and $X_i = \tau_i$, recovering the total absorption time mentioned above.

\begin{definition}[Bivariate Common-Shock $\mbox{PH}$ Distribution]\label{def:csph}
A bivariate random vector \((X_1, X_2)\) follows a {bivariate common-shock $\mbox{PH}$ ($\mbox{CSPH}$) distribution} with parameters \((\bm{\alpha}, \bm{T}, \bm{U}, \bm{Q}_1, \bm{Q}_2, a_1, a_2)\), where $\bm{\alpha}$ is the initial distribution on $\mathcal{E}$, if two underlying continuous-time Markov processes \(J_1\) and \(J_2\) evolve identically on $\mathcal{E}$ according to $\bm{T}$ starting from $\bm{\alpha}$, until a common exit time $\tau_{1,2}$ where they transition to a state $K \in \mathcal{S}$ according to rates in $\bm{U}$. Subsequently, for $t > \tau_{1,2}$, $J_1$ and $J_2$ evolve independently on $\mathcal{S}$ starting from $K$, governed by subintensity matrices $\bm{Q}_1$ and $\bm{Q}_2$ respectively, until absorption. Let $\tilde{\tau}_i$ be the time spent by $J_i$ in $\mathcal{S}$. The random variables \(X_1\) and \(X_2\) are defined as
\[
X_i = a_i \tau_{1,2} + \tilde{\tau}_i, \quad i=1,2.
\]
\end{definition}

An important property of this construction is that the marginal distributions of $X_1$ and $X_2$ are themselves univariate $\mbox{PH}$. To see this, consider the absorption time of a related Markov process.  The process begins in the pre-shock phase $\mathcal{E}$, with initial distribution $\bm{\alpha}$. The component $a_i \tau_{1,2}$ corresponds to the absorption time from $\mathcal{E}$ where the rates within $\bm{T}$ and exits $\bm{U}$ are scaled by $1/a_i$. The subsequent time $\tilde{\tau}_i$ corresponds to the absorption time from $\mathcal{S}$ governed by $\bm{Q}_i$, starting from the state $K$ determined by the (scaled) exit from $\mathcal{E}$. Combining these, $X_i$ represents the total absorption time of a Markov process on the combined state space $\mathcal{E} \cup \mathcal{S}$ governed by the subintensity matrix
\[
\bm{G}_i = \begin{pmatrix} \bm{T}/a_i & \bm{U}/a_i \\ \bm{0} & \bm{Q}_i \end{pmatrix},
\]
starting from the initial distribution $(\bm{\alpha}, \bm{0})$, which assigns zero initial probability to the states in $\mathcal{S}$. Therefore,
\begin{align}\label{eq:aux-Xi-PH}
X_i \sim \mbox{PH}((\bm{\alpha}, \bm{0}), \bm{G}_i).
\end{align}
Each marginal component thus retains the analytical tractability associated with $\mbox{PH}$ distributions.

By maintaining essential Markovian and $\mbox{PH}$ properties while incorporating correlated risks through a shared event, the model provides closed-form marginal distributions and tractable matrix-analytic methods. The $\mbox{CSPH}$ framework enables the derivation of joint tail probabilities, conditional risk measures, and moment-based metrics. The scaling factors $a_1$ and $a_2$ accommodate different degrees of susceptibility, modeling scenarios where one line of business experiences a shock with a different intensity than another. The $\mbox{CSPH}$ distribution thus provides a framework for analyzing bivariate risks under a single random shock event while maintaining compatibility with standard matrix-analytic techniques for parameter estimation and risk analysis.

\section{Fundamental Properties of Bivariate $\mbox{CSPH}$}
\label{sec:model-properties}

This section presents the fundamental properties of the $\mbox{CSPH}$ family of bivariate distributions. We derive explicit expressions for the marginal and joint probability density functions, the joint cumulative distribution function, and the joint moment generating function. These properties support using the $\mbox{CSPH}$ class as a tractable framework for risk analysis of common-shock scenarios.

\subsection{Shock-Time Distribution and Conditional Residuals}
\label{subsec:shock_residuals}

We decompose the vector $(X_1, X_2)$ by conditioning on the common-shock time $\tau_{1,2}$ and the state $K \in \mathcal{S}$ entered at that time. Recall from Section \ref{sec:common-shocks} that
\[
K = J_{1,2}(\tau_{1,2}) \in \mathcal{S}
\]
is the state reached when the joint process $J_{1,2}$ exits the pre-shock states $\mathcal{E}$. The selection of $K$ is governed by the transition rates in the matrix $\bm{U}$. Upon entering state $K$ at time $\tau_{1,2}$, the individual processes $J_1$ and $J_2$ proceed independently within $\mathcal{S}$, governed by subintensity matrices $\bm{Q}_1$ and $\bm{Q}_2$, respectively. The time process $J_i$ subsequently spends in $\mathcal{S}$ until absorption is the residual lifetime $\tilde{\tau}_i$.

A key property is conditional independence. Given the state $K=k$ entered at time $\tau_{1,2}$, the residual lifetimes $\tilde{\tau}_1$ and $\tilde{\tau}_2$ are independent of each other and of $\tau_{1,2}$. The conditional distribution of $\tilde{\tau}_i$ given $\{K=k\}$ is univariate $\mbox{PH}$ with {representation} $\mbox{PH}(\bm{e}_k^\top, \bm{Q}_i)$.

We characterize the distributions of the components involved in the decomposition of $(X_1, X_2)$ via $(\tau_{1,2}, K, \tilde{\tau}_1, \tilde{\tau}_2)$.

\medskip
\noindent\textbf{Common-shock time \(\tau_{1,2}\) and state \(K\).}
The joint distribution of $(\tau_{1,2}, K)$ is described by the defective probability density function $f_{\tau_{1,2}}^{(k)}(t)$, representing the density associated with the shock occurring at time $t$ and the process entering state $k \in \mathcal{S}$. This is given by the standard $\mbox{PH}$ formula for absorption into state $k$ from $\mathcal{E}$:
\begin{equation} \label{eq:defective_density_shock}
f_{\tau_{1,2}}^{(k)}(t)
=
\bm{\alpha}
e^{\bm{T}t}
\bm{u}^{(k)},
\quad
t \ge 0,
\end{equation}
where 
\(\bm{u}^{(k)} = \bm{U}\bm{e}_k\) is the $k$-th column of \(\bm{U}\).
The marginal probability density function of the common-shock time \(\tau_{1,2}\) is obtained by summing over all possible destination states $k \in \mathcal{S}$:
\begin{equation*} \label{eq:marginal_density_shock}
f_{\tau_{1,2}}(t)
=
\sum_{k \in \mathcal{S}}
f_{\tau_{1,2}}^{(k)}(t)
=
\bm{\alpha}
e^{\bm{T}t}
\bm{U}\bm{1},\quad
t \ge 0.
\end{equation*}
The transform associated with the defective distribution, $\mathbb{E}[e^{s\tau_{1,2}}\mathds{1}_{\{K = k\}}]$, is
\begin{equation} \label{eq:mgf_defective_shock}
M_{\tau_{1,2}}^{(k)}(s)
=
\mathbb{E}[e^{s\tau_{1,2}}\mathds{1}_{\{K = k\}}]
=
\bm{\alpha}
{(- \bm{T}- s\bm{I} )^{-1}}
\bm{u}^{(k)},
\end{equation}
valid for $s$ such that ${(- \bm{T}-s\bm{I})}$ is invertible.

\medskip
\noindent\textbf{Residual lifetimes conditioned on \(\{K = k\}\).}
Given that the common shock leads to state $k \in \mathcal{S}$, the subsequent residual lifetime $\tilde{\tau}_i$ follows the univariate $\mbox{PH}$ distribution $\mbox{PH}(\bm{e}_k^\top, \bm{Q}_i)$. Its probability density function is
\begin{equation} \label{eq:pdf_residual}
f_{\tilde{\tau}_i \mid K=k}(y)
=
\bm{e}_k^\top
e^{\bm{Q}_i y}
\bm{q}_i,
\quad
y \ge 0,
\end{equation}
its cumulative distribution function is
\begin{equation} \label{eq:cdf_residual}
F_{\tilde{\tau}_i \mid K=k}(y)
=
\P(\tilde{\tau}_i \le y \mid K=k) = 1 - \bm{e}_k^\top e^{\bm{Q}_i y} \bm{1}, \quad y \ge 0,
\end{equation}
and its moment generating function is
\begin{equation} \label{eq:mgf_residual}
M_{\tilde{\tau}_i \mid K=k}(s)
=
\mathbb{E}[e^{s \tilde{\tau}_i} \mid K=k]
=
\bm{e}_k^\top
{ (- \bm{Q}_i-s\bm{I} )^{-1}}
\bm{q}_i,
\end{equation}
where \(\bm{q}_i := -\bm{Q}_i\bm{1}\) is the vector of absorption rates from $\mathcal{S}$ for process $J_i$. As noted, \(\tilde{\tau}_1\) and \(\tilde{\tau}_2\) are conditionally independent given $K=k$.

\subsection{Joint Density}
\label{subsec:joint_density}

The joint probability density function $f_{X_1, X_2}(z_1, z_2)$ for the $\mbox{CSPH}$ distributed random vector $(X_1, X_2) = (a_1\tau_{1,2}+\tilde{\tau}_1, a_2\tau_{1,2}+\tilde{\tau}_2)$ can be expressed using a block-matrix exponential representation.

Let $z_1 \curlywedge z_2 = (z_1/a_1)\wedge (z_2/a_2) = \min(z_1/a_1, z_2/a_2)$. We define the matrix function $\bm{V}$ from Van Loan's formula \citep{van1978computing} as
\begin{align}
\bm{V}[\bm{A}, \bm{B}, \bm{C}, u] = \begin{pmatrix} \bm{I} & \bm{0} \end{pmatrix} \exp\left( \begin{pmatrix} \bm{A} & \bm{B} \\ \bm{0} & \bm{C} \end{pmatrix}u \right) \begin{pmatrix} \bm{0} \\ \bm{I} \end{pmatrix},\quad u\ge 0,\label{eq:van-loan-func-V}
\end{align}
which evaluates the integral $\int_0^{u} e^{\bm{A}t} \bm{B} e^{\bm{C}(u-t)} \dd t$. The notation $\otimes$ denotes the Kronecker product and $\oplus$ denotes the Kronecker sum.

\begin{theorem}[Joint Density] \label{thm:joint_density_scaled_block}
Let \(g(t, y_1, y_2)\) denote the trivariate probability density function of $(\tau_{1,2},\,\tilde{\tau}_1,\,\tilde{\tau}_2)$. Then, for $t, y_1, y_2 \ge 0$,
\begin{align}\label{eq:trivariate_density_g}
g(t, y_1, y_2)=\bm{\alpha} e^{\bm{T} t} \bm{P}\Bigl( (e^{\bm{Q}_1 y_1}\bm{q}_1) \otimes (e^{\bm{Q}_2 y_2}\bm{q}_2) \Bigr),
\end{align}
where $\bm{q}_i$ is defined in (\ref{eq:mgf_residual}), $\bm{u}^{(k)} = \bm{U} \bm{e}_k$, and
\[
\bm{P}=\sum_{k\in\mathcal{S}} \bm{u}^{(k)} (\bm{e}_k^\top \otimes \bm{e}_k^\top).
\]
Let $f_{X_1, X_2}(z_1, z_2)$ be the bivariate probability density function of $(X_1, X_2)$. For $z_1, z_2 \ge 0$, this density is given by
\[
f_{X_1, X_2}(z_1, z_2)
=
\bm{\alpha} \bm{V}[\bm{T}, \bm{P}, (a_1\bm{Q}_1)\oplus (a_2\bm{Q}_2), z_1\curlywedge z_2]
\Bigl( \bm{q}_1(z_1, z_2) \otimes \bm{q}_2(z_1, z_2) \Bigr),
\]
where for $i=1,2$,
\begin{align*}
\bm{q}_i(z_1, z_2)&=e^{\bm{Q}_i(z_i-a_i(z_1\curlywedge z_2))} \bm{q}_i.
\end{align*}
\end{theorem}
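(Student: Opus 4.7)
The plan is to derive the theorem in two stages. First I would establish the trivariate density $g(t, y_1, y_2)$ of $(\tau_{1,2}, \tilde{\tau}_1, \tilde{\tau}_2)$ by conditioning on the common-shock destination state $K$. Then I would convert this into the bivariate density $f_{X_1, X_2}$ by a linear change of variables followed by a commuting-exponential shift that produces the Van Loan representation.

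For the first stage, the conditional independence recorded in Section \ref{subsec:shock_residuals} gives
\[
g(t, y_1, y_2) = \sum_{k \in \mathcal{S}} f^{(k)}_{\tau_{1,2}}(t) \, f_{\tilde{\tau}_1 \mid K=k}(y_1) \, f_{\tilde{\tau}_2 \mid K=k}(y_2).
\]
Substituting (\ref{eq:defective_density_shock}) and (\ref{eq:pdf_residual}) produces, inside each summand, the product of two scalars $(\bm{e}_k^\top v_1)(\bm{e}_k^\top v_2)$ with $v_i = e^{\bm{Q}_i y_i}\bm{q}_i$. By the mixed-product property of the Kronecker product, this scalar product equals $(\bm{e}_k^\top \otimes \bm{e}_k^\top)(v_1 \otimes v_2)$. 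Factoring $\bm{\alpha} e^{\bm{T}t}$ on the left and pulling the sum over $k$ inside then produces the matrix $\bm{P}=\sum_{k\in\mathcal{S}} \bm{u}^{(k)}(\bm{e}_k^\top \otimes \bm{e}_k^\top)$ and yields (\ref{eq:trivariate_density_g}).

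For the second stage, the change of variables $(t, y_1, y_2) \mapsto (z_1, z_2, t)$ with $z_i = a_i t + y_i$ has unit Jacobian, and the nonnegativity constraints $y_i \ge 0$ reduce to $t \in [0, z_1 \curlywedge z_2]$. Writing $s = z_1 \curlywedge z_2$, this gives
\[
f_{X_1, X_2}(z_1, z_2) = \int_0^{s} g(t, z_1 - a_1 t, z_2 - a_2 t) \, dt.
\]
The pivotal step is to split the residual exponential as
\[
e^{\bm{Q}_i(z_i - a_i t)}\bm{q}_i = e^{(a_i \bm{Q}_i)(s - t)} \, e^{\bm{Q}_i(z_i - a_i s)}\bm{q}_i = e^{(a_i \bm{Q}_i)(s - t)} \, \bm{q}_i(z_1, z_2),
\]
valid because $\bm{Q}_i(z_i - a_i s)$ and $a_i\bm{Q}_i(s - t)$ commute. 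Applying the mixed-product identity $(A\otimes B)(v\otimes w)=(Av)\otimes(Bw)$ and the Kronecker-sum identity $e^{(a_1\bm{Q}_1)(s-t)} \otimes e^{(a_2\bm{Q}_2)(s-t)} = e^{((a_1\bm{Q}_1)\oplus(a_2\bm{Q}_2))(s-t)}$, I can then pull the $t$-independent vector $\bm{q}_1(z_1,z_2) \otimes \bm{q}_2(z_1,z_2)$ outside the integral. What remains inside is precisely the Van Loan integrand (\ref{eq:van-loan-func-V}) with $\bm{A}=\bm{T}$, $\bm{B}=\bm{P}$, $\bm{C}=(a_1\bm{Q}_1)\oplus(a_2\bm{Q}_2)$, and $u=s$, yielding the claimed formula.

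The main obstacle is the shift-and-factor trick. Without extracting a $t$-independent terminal factor $e^{\bm{Q}_i(z_i - a_i s)}\bm{q}_i$, the two exponentials inside the Kronecker product have mismatched "remaining times" $z_i - a_i t$ that cannot be merged via a single Kronecker sum, so the Van Loan representation is unavailable. Once this reparametrization is in place the rest reduces to standard Kronecker-algebra identities and a direct application of (\ref{eq:van-loan-func-V}).
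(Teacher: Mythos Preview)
Your proposal is correct and follows essentially the same approach as the paper: conditioning on $K$ to obtain $g$ via the Kronecker identity $(\bm{e}_k^\top v_1)(\bm{e}_k^\top v_2)=(\bm{e}_k^\top\otimes\bm{e}_k^\top)(v_1\otimes v_2)$, then integrating out $t$ after the same commuting-exponential split $e^{\bm{Q}_i(z_i-a_it)}=e^{a_i\bm{Q}_i(s-t)}e^{\bm{Q}_i(z_i-a_is)}$ and applying the Kronecker-sum identity to reach the Van Loan form. The only cosmetic difference is that you frame the passage from $g$ to $f_{X_1,X_2}$ as a change of variables with unit Jacobian, whereas the paper states it directly as marginalization over $t$; the content is identical.
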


\begin{proof}
We derive the expression for the trivariate probability density function \(g(t, y_1, y_2)\). Condition on the event that absorption occurs in state \(k\) at time \(\tau_{1,2}=t\). The defective probability density function is $f^{(k)}_{\tau_{1,2}}(t)$ given by (\ref{eq:defective_density_shock}). Given this event, $\tilde{\tau}_1$ and $\tilde{\tau}_2$ are independent with conditional probability density functions $f_{\tilde{\tau}_1 \mid K=k}(y_1)$ and $f_{\tilde{\tau}_2 \mid K=k}(y_2)$ from (\ref{eq:pdf_residual}).
By the law of total probability, summing over all \(k\in\mathcal{S}\) gives
\begin{align*}
g(t, y_1, y_2)
&=\sum_{k\in\mathcal{S}} f^{(k)}_{\tau_{1,2}}(t) f_{\tilde{\tau}_1 \mid K=k}(y_1)f_{\tilde{\tau}_2 \mid K=k}(y_2) \\
&= \bm{\alpha} e^{\bm{T} t} \left(\sum_{k\in\mathcal{S}}\bm{u}^{(k)} (\bm{e}_k^\top \otimes \bm{e}_k^\top)\right)
\big(( e^{\bm{Q}_1 y_1} \bm{q}_1 )\otimes (e^{\bm{Q}_2 y_2} \bm{q}_2 )\big) \\
&= \bm{\alpha} e^{\bm{T} t} \bm{P}\big(( e^{\bm{Q}_1 y_1} \bm{q}_1 )\otimes (e^{\bm{Q}_2 y_2} \bm{q}_2 )\big).
\end{align*}
Since \(X_i=a_i\tau_{1,2}+\tilde{\tau}_i\), the bivariate probability density function $f_{X_1, X_2}(z_1, z_2)$ can be obtained by integrating over all possible realizations of the common shock.  Specifically, the corresponding range of $\tau_{1,2}$ is $0 \le t \le z_1 \wedge z_2$, yielding:
\[
f_{X_1, X_2}(z_1, z_2)=\int_0^{z_1\curlywedge z_2} g(t, z_1-a_1 t, z_2-a_2 t) \dd t.
\]
Substituting \(g\),
\[
f_{X_1, X_2}(z_1, z_2)=\int_0^{z_1\curlywedge z_2} \bm{\alpha} e^{\bm{T} t}\bm{P}\big(( e^{\bm{Q}_1 (z_1-a_1t)} \bm{q}_1 )\otimes (e^{\bm{Q}_2 (z_2-a_2t)} \bm{q}_2 )\big) \dd t.
\]
Giving that $e^{\bm{Q}_i(z_i-a_i t)} = e^{\bm{Q}_i(z_i-a_i(z_1\curlywedge z_2))} e^{a_i\bm{Q}_i((z_1\curlywedge z_2)-t)}$ and using $\bm{q}_i(z_1, z_2)$ as defined above, the term in the integral becomes
\[
\bm{\alpha} e^{\bm{T} t} \bm{P} \Bigl( (e^{a_1\bm{Q}_1((z_1\curlywedge z_2)-t)} \bm{q}_1(z_1, z_2) ) \otimes (e^{a_2\bm{Q}_2((z_1\curlywedge z_2)-t)} \bm{q}_2(z_1, z_2) ) \Bigr).
\]
Applying the identity $e^{a_1\bm{A}r} \otimes e^{a_2\bm{B}r} = e^{((a_1\bm{A})\oplus(a_2\bm{B}))r}$, the expression rewrites as
\[
\bm{\alpha} e^{\bm{T} t} \bm{P} e^{((a_1\bm{Q}_1)\oplus(a_2\bm{Q}_2))( z_1\curlywedge z_2 - t)} (\bm{q}_1(z_1, z_2) \otimes \bm{q}_2(z_1, z_2)).
\]
The integral is therefore
\[
\bm{\alpha} \left( \int_0^{z_1\curlywedge z_2} e^{\bm{T} t} \bm{P} e^{((a_1\bm{Q}_1)\oplus(a_2\bm{Q}_2))((z_1\curlywedge z_2)-t)} \dd t \right) (\bm{q}_1(z_1, z_2) \otimes \bm{q}_2(z_1, z_2)).
\]
Employing (\ref{eq:van-loan-func-V}), this becomes
$\bm{\alpha} \bm{V}[\bm{T}, \bm{P}, (a_1\bm{Q}_1)\oplus (a_2\bm{Q}_2), z_1\curlywedge z_2] (\bm{q}_1(z_1, z_2) \otimes \bm{q}_2(z_1, z_2))$.
\end{proof}

\subsection{Joint Cumulative Distribution Function}
\label{subsec:joint_cdf}

The closed-form expression for the joint cumulative distribution function $F_{X_1, X_2}(z_1, z_2) = \P(X_1 \le z_1, X_2 \le z_2)$ for the scaled variables $X_i = a_i \tau_{1,2} + \tilde{\tau}_i$ also uses the notation $z_1 \curlywedge z_2$ and the function $\bm{V}[\bm{A}, \bm{B}, \bm{C}, u]$ as defined in (\ref{eq:van-loan-func-V}).

\begin{theorem}[Joint Cumulative Distribution Function] \label{thm:joint_cdf_scaled_block_corrected}
Let $(X_1, X_2)$ be a $\mbox{CSPH}$ distributed random vector with parameters $(\bm{\alpha}, \bm{T}, \bm{U}, \bm{Q}_1, \bm{Q}_2, a_1, a_2)$. For $z_1, z_2 \ge 0$, define \[\bm{S}_i(z_1, z_2) = e^{\bm{Q}_i (z_i - a_i (z_1 \curlywedge z_2))} \bm{1}.\]
The joint cumulative distribution function $F_{X_1, X_2}(z_1, z_2)$ is given by
\begin{align} F_{X_1, X_2}(z_1, z_2) = &\bm{\alpha} \bm{V}[\bm{T}, \bm{P}, \bm{0} \oplus \bm{0}, z_1 \curlywedge z_2](\bm{1} \otimes \bm{1}) \nonumber\\
&- \bm{\alpha} \bm{V}[\bm{T}, \bm{P}, a_1\bm{Q}_1 \oplus \bm{0}, z_1 \curlywedge z_2](\bm{S}_1(z_1, z_2) \otimes \bm{1}) \nonumber\\
&- \bm{\alpha} \bm{V}[\bm{T}, \bm{P}, \bm{0} \oplus a_2\bm{Q}_2, z_1 \curlywedge z_2](\bm{1} \otimes \bm{S}_2(z_1, z_2)) \nonumber\\
&+ \bm{\alpha} \bm{V}[\bm{T}, \bm{P}, (a_1\bm{Q}_1)\oplus(a_2\bm{Q}_2), z_1 \curlywedge z_2](\bm{S}_1(z_1, z_2) \otimes \bm{S}_2(z_1, z_2)).\label{eq:aux-joint-2}
\end{align}
\end{theorem}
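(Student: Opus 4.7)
The plan is to mirror the conditioning argument used in the proof of Theorem~\ref{thm:joint_density_scaled_block}, but work at the level of cumulative distribution functions rather than densities. Given $\tau_{1,2}=t$ and $K=k$, the residuals $\tilde{\tau}_1$ and $\tilde{\tau}_2$ are conditionally independent with conditional cumulative distribution functions given by (\ref{eq:cdf_residual}), and the event $\{X_1\le z_1, X_2\le z_2\}$ reduces to $\{\tilde{\tau}_1\le z_1-a_1 t,\ \tilde{\tau}_2\le z_2-a_2 t\}$, which can have positive probability only when $t\le z_1\curlywedge z_2$. By the tower property and the defective density (\ref{eq:defective_density_shock}), I would write
\[
F_{X_1,X_2}(z_1,z_2)=\int_0^{z_1\curlywedge z_2}\sum_{k\in\mathcal{S}} \bm{\alpha} e^{\bm{T} t}\bm{u}^{(k)}\bigl(1-\bm{e}_k^\top e^{\bm{Q}_1(z_1-a_1 t)}\bm{1}\bigr)\bigl(1-\bm{e}_k^\top e^{\bm{Q}_2(z_2-a_2 t)}\bm{1}\bigr)\,\dd t.
\]

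Expanding the product as $(1-A)(1-B)=1-A-B+AB$ splits the integrand into four pieces carrying signs $+,-,-,+$. For each piece involving $e^{\bm{Q}_i(z_i-a_i t)}\bm{1}$, I would use the same factorisation employed in the proof of Theorem~\ref{thm:joint_density_scaled_block}, namely $e^{\bm{Q}_i(z_i-a_i t)}\bm{1}=e^{a_i\bm{Q}_i((z_1\curlywedge z_2)-t)}\bm{S}_i(z_1,z_2)$, which extracts the $t$-independent vector $\bm{S}_i(z_1,z_2)$ and leaves an exponential of the Van~Loan form. Reassembling the $k$-sums into the matrix $\bm{P}=\sum_{k\in\mathcal{S}}\bm{u}^{(k)}(\bm{e}_k^\top\otimes\bm{e}_k^\top)$ requires inserting a trivial factor $\bm{1}$ or $e^{\bm{0}\cdot}\bm{1}$ on whichever side carries the constant $1$. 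Combined with the identity $e^{(a_1\bm{Q}_1\oplus a_2\bm{Q}_2)r}(\bm{v}_1\otimes\bm{v}_2)=(e^{a_1\bm{Q}_1 r}\bm{v}_1)\otimes(e^{a_2\bm{Q}_2 r}\bm{v}_2)$ and the convention $e^{\bm{0}r}=\bm{I}$, each of the four integrands then takes the generic shape $\bm{\alpha} e^{\bm{T} t}\bm{P}\,e^{\bm{C}_{\delta_1,\delta_2}((z_1\curlywedge z_2)-t)}(\bm{w}_1\otimes\bm{w}_2)$, where $\bm{C}_{\delta_1,\delta_2}\in\{\bm{0}\oplus\bm{0},\ a_1\bm{Q}_1\oplus\bm{0},\ \bm{0}\oplus a_2\bm{Q}_2,\ a_1\bm{Q}_1\oplus a_2\bm{Q}_2\}$ and $\bm{w}_i\in\{\bm{1},\bm{S}_i(z_1,z_2)\}$. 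Applying (\ref{eq:van-loan-func-V}) to each of the four integrals then produces exactly the four terms in (\ref{eq:aux-joint-2}).

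The main obstacle is the Kronecker bookkeeping: the operator $\bm{P}$ enforces a common index $k$ in both residual factors, so when the CDF factor for one component is replaced by the constant $1$, one must be careful to represent that constant as $\bm{e}_k^\top\bm{1}$ and absorb $\bm{1}$ (together with a trivial $e^{\bm{0}\cdot}$) into the Kronecker structure so that the common-$k$ coupling is preserved and the expression fits the Van~Loan template. Verifying that $\bm{P}(\bm{1}\otimes\bm{1})=\bm{U}\bm{1}$, $\bm{P}(\bm{S}_1\otimes\bm{1})=\sum_k\bm{u}^{(k)}\bm{e}_k^\top\bm{S}_1$, and the two analogous identities turns these algebraic manipulations into a routine check, after which summation of the four pieces with the appropriate signs yields the claimed formula.
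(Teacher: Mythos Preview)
Your proposal is correct and follows essentially the same approach as the paper: condition on $(\tau_{1,2},K)$ to obtain the integral over $t\in[0,z_1\curlywedge z_2]$, expand $(1-S_1)(1-S_2)$ into four terms, factor $e^{\bm{Q}_i(z_i-a_it)}\bm{1}=e^{a_i\bm{Q}_i((z_1\curlywedge z_2)-t)}\bm{S}_i(z_1,z_2)$, and apply Van~Loan's formula (\ref{eq:van-loan-func-V}) via the Kronecker structure induced by $\bm{P}$. Your explicit handling of the constant factor $1=\bm{e}_k^\top\bm{1}=\bm{e}_k^\top e^{\bm{0}\cdot}\bm{1}$ to maintain the common-$k$ coupling is in fact more detailed than the paper, which simply refers back to the Kronecker manipulations of Theorem~\ref{thm:joint_density_scaled_block}.
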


\begin{proof}
The joint cumulative distribution function is given by $F_{X_1, X_2}(z_1, z_2) = \P(X_1 \leq z_1, X_2 \leq z_2)$. Conditioning on $(\tau_{1,2}, K)$ leads to the integral form
\begin{align}\label{eq:aux-joint-1}
F_{X_1, X_2}(z_1, z_2) = \int_{t=0}^{z_1 \curlywedge z_2} \sum_{k \in \mathcal{S}} f_{\tau_{1,2}}^{(k)}(t) F_{\tilde{\tau}_1 \mid K=k}(z_1 - a_1 t) F_{\tilde{\tau}_2 \mid K=k}(z_2 - a_2 t)  dt,
\end{align}
where $f_{\tau_{1,2}}^{(k)}(t)$ is given by (\ref{eq:defective_density_shock}) and $F_{\tilde{\tau}_i \mid K=k}(y)$ by (\ref{eq:cdf_residual}). Let 
\[S_{\tilde{\tau}_i \mid K=k}(y) = 1 - F_{\tilde{\tau}_i \mid K=k}(y) = \bm{e}_k^\top e^{\bm{Q}_i y} \bm{1}.\] The product of conditional cumulative distribution functions is $(1 - S_{\tilde{\tau}_1 \mid K=k}(z_1 - a_1 t))(1 - S_{\tilde{\tau}_2 \mid K=k}(z_2 - a_2 t))$. Expanding this product and substituting into \eqref{eq:aux-joint-1} yields $F_{X_1, X_2}(z_1, z_2) = I_1 - I_2 - I_3 + I_4$ where
\begin{align*}
I_1 &= \int_0^{z_1 \curlywedge z_2} \sum_{k\in\mathcal{S}} (\bm{\alpha} e^{\bm{T}t} \bm{u}^{(k)}) dt \\
I_2 &= \int_0^{z_1 \curlywedge z_2} \sum_{k\in\mathcal{S}} (\bm{\alpha} e^{\bm{T}t} \bm{u}^{(k)}) (\bm{e}_k^\top e^{\bm{Q}_1 (z_1-a_1 t)}\bm{1}) dt \\
I_3 &= \int_0^{z_1 \curlywedge z_2} \sum_{k\in\mathcal{S}} (\bm{\alpha} e^{\bm{T}t} \bm{u}^{(k)}) (\bm{e}_k^\top e^{\bm{Q}_2 (z_2-a_2 t)}\bm{1}) dt \\
I_4 &= \int_0^{z_1 \curlywedge z_2} \sum_{k\in\mathcal{S}} (\bm{\alpha} e^{\bm{T}t} \bm{u}^{(k)}) (\bm{e}_k^\top e^{\bm{Q}_1 (z_1-a_1 t)}\bm{1}) (\bm{e}_k^\top e^{\bm{Q}_2 (z_2-a_2 t)}\bm{1}) dt.
\end{align*}
Each integral $I_j$ is evaluated using the matrix function $\bm{V}[\bm{A}, \bm{B}, \bm{C}, u]$ defined in (\ref{eq:van-loan-func-V}). This evaluation involves factoring terms 
\[e^{\bm{Q}_i (z_i - a_i t)}\bm{1}=e^{a_i \bm{Q}_i ((z_1 \curlywedge z_2)-t)} e^{\bm{Q}_i (z_i - a_i (z_1 \curlywedge z_2))} \bm{1}=e^{a_i \bm{Q}_i ((z_1 \curlywedge z_2)-t)} \bm{S}_i(z_1, z_2).\] 
Subsequent application of Kronecker product properties, analogous to those employed in the proof of Theorem \ref{thm:joint_density_scaled_block}, leads to the expressions for $I_j$ that align with \eqref{eq:aux-joint-2}.
\end{proof}

\subsection{Joint Moment Generating Function}
\label{subsec:joint_mgf}

The joint moment generating function provides a compact representation of the moments of the distribution.

\begin{theorem}[Joint Moment Generating Function] \label{thm:joint_mgf_scaled}
The joint moment generating function of the $\mbox{CSPH}$ distributed random vector $(X_1, X_2)$ is given by
\begin{align*}
M_{X_1, X_2}(s_1, s_2) &= \mathbb{E}[e^{s_1 X_1 + s_2 X_2}] \\
&= \sum_{k \in \mathcal{S}} M_{\tau_{1,2}}^{(k)}(a_1 s_1 + a_2 s_2) M_{\tilde{\tau}_1 \mid K=k}(s_1) M_{\tilde{\tau}_2 \mid K=k}(s_2),
\end{align*}
where $M_{\tau_{1,2}}^{(k)}(s)$ and $M_{\tilde{\tau}_i \mid K=k}(s)$ are the transforms defined in (\ref{eq:mgf_defective_shock}) and (\ref{eq:mgf_residual}) respectively, valid for $s_1, s_2$ in a neighborhood of 0 such that the underlying matrix inverses exist.
\end{theorem}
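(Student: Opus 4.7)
The plan is to exploit the linear decomposition $s_1 X_1 + s_2 X_2 = (a_1 s_1 + a_2 s_2)\,\tau_{1,2} + s_1 \tilde{\tau}_1 + s_2 \tilde{\tau}_2$, which follows directly from the definition $X_i = a_i \tau_{1,2} + \tilde{\tau}_i$, and then to apply the tower property conditioning on the pair $(\tau_{1,2}, K)$. The central structural fact, already established in Section \ref{subsec:shock_residuals}, is that given $K = k$, the residual lifetimes $\tilde{\tau}_1$ and $\tilde{\tau}_2$ are mutually independent and independent of $\tau_{1,2}$, with distribution $\mbox{PH}(\bm{e}_k^\top, \bm{Q}_i)$. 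This conditional independence is what makes the joint m.g.f.\ factor into a single sum over the shock-destination states.

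Concretely, I would first write
\[
\mathbb{E}[e^{s_1 X_1 + s_2 X_2}] = \mathbb{E}\!\left[ e^{(a_1 s_1 + a_2 s_2)\tau_{1,2}}\, e^{s_1 \tilde{\tau}_1}\, e^{s_2 \tilde{\tau}_2}\right],
\]
then insert $\sum_{k\in\mathcal{S}} \mathds{1}_{\{K=k\}}$ inside the expectation and condition on $(\tau_{1,2}, K)$. On the event $\{K = k\}$, the exponential $e^{(a_1 s_1 + a_2 s_2)\tau_{1,2}}$ is $(\tau_{1,2}, K)$-measurable, while $e^{s_i \tilde{\tau}_i}$ has conditional expectation $M_{\tilde{\tau}_i \mid K=k}(s_i)$ and is conditionally independent of the other residual. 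Factoring these out and using the defective transform $M_{\tau_{1,2}}^{(k)}(s) = \mathbb{E}[e^{s\tau_{1,2}}\mathds{1}_{\{K=k\}}]$ from \eqref{eq:mgf_defective_shock} with argument $s = a_1 s_1 + a_2 s_2$ yields the stated formula.

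The only non-mechanical step is justifying that all objects are well defined in a neighbourhood of $0$: the conditional m.g.f.\ $M_{\tilde{\tau}_i \mid K=k}(s_i)$ requires $(s_i \bm{I} - \bm{Q}_i)$ to be invertible, and $M_{\tau_{1,2}}^{(k)}(a_1 s_1 + a_2 s_2)$ requires $((a_1 s_1 + a_2 s_2)\bm{I} - \bm{T})$ to be invertible. Since $\bm{T}, \bm{Q}_1, \bm{Q}_2$ are subintensity matrices with strictly negative real spectra, all three inverses exist on an open neighbourhood of the origin in $(s_1, s_2)$, which is precisely the domain of validity claimed in the theorem. An alternative route, integrating the joint density from Theorem \ref{thm:joint_density_scaled_block} via the Van Loan representation, would recover the same result but at the cost of heavier Kronecker-algebra bookkeeping; the conditioning argument is both shorter and makes the common-shock mechanism transparent, so it is the route I would take.
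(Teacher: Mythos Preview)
Your proposal is correct and follows essentially the same route as the paper's own proof: expand $s_1 X_1 + s_2 X_2$ using $X_i = a_i\tau_{1,2} + \tilde{\tau}_i$, condition on the shock state $K=k$, and exploit the conditional independence of $\tau_{1,2}, \tilde{\tau}_1, \tilde{\tau}_2$ given $K$ to factor the expectation into the defective transform $M_{\tau_{1,2}}^{(k)}$ times the two residual m.g.f.'s. The only cosmetic difference is that you condition on the pair $(\tau_{1,2}, K)$ whereas the paper conditions on $K$ alone and then invokes iterated expectations; the underlying argument is identical.
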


\begin{proof}
By definition, $M_{X_1, X_2}(s_1, s_2) = \mathbb{E}[e^{s_1 X_1 + s_2 X_2}]$. Substituting $X_i = a_i \tau_{1,2} + \tilde{\tau}_i$ gives
\[
M_{X_1, X_2}(s_1, s_2) = \mathbb{E}\left[ e^{s_1(a_1 \tau_{1,2} + \tilde{\tau}_1) + s_2(a_2 \tau_{1,2} + \tilde{\tau}_2)} \right]
= \mathbb{E}\left[ e^{(a_1 s_1 + a_2 s_2)\tau_{1,2}} e^{s_1 \tilde{\tau}_1} e^{s_2 \tilde{\tau}_2} \right].
\]
Using the law of total expectation, conditioning on the state $K=k$ entered at time $\tau_{1,2}$, results in
\[
M_{X_1, X_2}(s_1, s_2) = \sum_{k \in \mathcal{S}} \mathbb{E}\left[ e^{(a_1 s_1 + a_2 s_2)\tau_{1,2}} e^{s_1 \tilde{\tau}_1} e^{s_2 \tilde{\tau}_2} \mathds{1}_{\{K=k\}} \right].
\]
By the conditional independence property, given $K=k$, $\tau_{1,2}$ is independent of $(\tilde{\tau}_1, \tilde{\tau}_2)$, and $\tilde{\tau}_1$ is independent of $\tilde{\tau}_2$. Using iterated expectations gives
\begin{align*}
\mathbb{E}&\left[ e^{(a_1 s_1 + a_2 s_2)\tau_{1,2}} e^{s_1 \tilde{\tau}_1} e^{s_2 \tilde{\tau}_2} \mathds{1}_{\{K=k\}} \right]\\ &= \mathbb{E}\left[ \mathds{1}_{\{K=k\}} \mathbb{E}\left[ e^{(a_1 s_1 + a_2 s_2)\tau_{1,2}} e^{s_1 \tilde{\tau}_1} e^{s_2 \tilde{\tau}_2} \mid K=k \right] \right] \\
&= \mathbb{E}\left[ \mathds{1}_{\{K=k\}} \mathbb{E}\left[ e^{(a_1 s_1 + a_2 s_2)\tau_{1,2}} \mid K=k \right] \mathbb{E}\left[ e^{s_1 \tilde{\tau}_1} \mid K=k \right] \mathbb{E}\left[ e^{s_2 \tilde{\tau}_2} \mid K=k \right] \right] \\
&= \mathbb{E}\left[ e^{(a_1 s_1 + a_2 s_2)\tau_{1,2}} \mathds{1}_{\{K=k\}} \right] M_{\tilde{\tau}_1 \mid K=k}(s_1) M_{\tilde{\tau}_2 \mid K=k}(s_2).
\end{align*}
The first term is precisely $M_{\tau_{1,2}}^{(k)}(a_1 s_1 + a_2 s_2)$ from (\ref{eq:mgf_defective_shock}). Summing over $k \in \mathcal{S}$ gives the stated result.
\end{proof}

\section{Further Properties of Bivariate $\mbox{CSPH}$}
\label{sec:further-model-properties}

Building upon the fundamental distributional properties of bivariate $\mbox{CSPH}$ distributions established in Section \ref{sec:model-properties}, this section explores further theoretical aspects and practical applications. We begin by establishing the denseness of the $\mbox{CSPH}$ class, underscoring its capacity to approximate a wide range of bivariate distributions. Subsequently, we explore the incorporation of size-biasing and Esscher transforms, techniques crucial for various applications in risk theory and actuarial science. A key development is a ``master formula'' that provides a unified framework for computing expectations involving these transformed distributions and indicator functions. Finally, we demonstrate the utility of this master formula by deriving expressions for several relevant risk measures, highlighting the analytical tractability of the $\mbox{CSPH}$ framework for risk assessment.

\subsection{Denseness Property}
\label{subsec:density_prop}

We establish that the class of $\mbox{CSPH}$ distributions is dense within the space of all bivariate distributions supported on $[0,\infty)^2$. This property ensures that $\mbox{CSPH}$ models can approximate any such target distribution arbitrarily well, highlighting their flexibility.

\begin{theorem} \label{thm:denseness_csph}
Let $F$ be any bivariate distribution function with support contained in $[0,\infty)^2$. Then, for any fixed scaling factors $a_1 > 0, a_2 > 0$, there exists a sequence of bivariate $\mbox{CSPH}$ distributions with these scaling factors that converges weakly to $F$.
\end{theorem}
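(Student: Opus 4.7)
The plan is a two-step approximation that reduces $\mbox{CSPH}$ denseness to the (known or easily established) denseness of the $\mbox{mPH}$ class recalled in Section \ref{sec:ph}. Since $\mbox{mPH}$ distributions are finite mixtures of products of univariate $\mbox{PH}$ laws and univariate $\mbox{PH}$ is weakly dense on $[0,\infty)$, a standard finite-mixture and atomic-approximation argument shows that $\mbox{mPH}$ is weakly dense among bivariate distributions on $[0,\infty)^2$. Granting this, I need only prove that every bivariate $\mbox{mPH}$ distribution arises as a weak limit of $\mbox{CSPH}$ distributions with the prescribed scaling factors $a_1, a_2 > 0$.

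For the reduction, given a target $\mbox{mPH}(\bm{\pi}_{\text{mPH}}, \{\bm{S}_1, \bm{S}_2\})$ on $p$ phases, I would construct a sequence of $\mbox{CSPH}$ distributions indexed by $n \in \mathbb{N}$ by taking $|\mathcal{E}| = |\mathcal{S}| = p$ and setting $\bm{\alpha} = \bm{\pi}_{\text{mPH}}$, $\bm{T}_n = -n\bm{I}$, $\bm{U}_n = n\bm{I}$ (so that $\bm{T}_n\bm{1} + \bm{U}_n\bm{1} = \bm{0}$), and $\bm{Q}_i = \bm{S}_i$, while holding $a_1, a_2$ fixed. Because $\bm{T}_n$ and $\bm{U}_n$ are both diagonal, the pre-shock chain cannot change phase within $\mathcal{E}$, so the entry state $K$ equals the initial state almost surely and has distribution $\bm{\alpha} = \bm{\pi}_{\text{mPH}}$, independently of $\tau_{1,2}^{(n)} \sim \mbox{Exp}(n)$. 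Consequently the residual pair $(\tilde{\tau}_1, \tilde{\tau}_2)$ already follows exactly the target $\mbox{mPH}$ law for every $n$, while $a_i\tau_{1,2}^{(n)} \to 0$ in probability as $n \to \infty$. Slutsky's theorem applied coordinatewise to the independent pair $((\tilde{\tau}_1, \tilde{\tau}_2), \tau_{1,2}^{(n)})$ then yields $(X_1^{(n)}, X_2^{(n)}) = (a_1 \tau_{1,2}^{(n)} + \tilde{\tau}_1,\, a_2 \tau_{1,2}^{(n)} + \tilde{\tau}_2) \Rightarrow (\tilde{\tau}_1, \tilde{\tau}_2)$ weakly.

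Combining the two steps via the triangle inequality for the L\'evy--Prokhorov metric (or via a diagonal extraction) completes the proof: for any target $F$ and $\varepsilon > 0$, first approximate $F$ to within $\varepsilon/2$ by an $\mbox{mPH}$, then approximate that $\mbox{mPH}$ to within $\varepsilon/2$ by a $\mbox{CSPH}$ from the construction above. The main obstacle I anticipate is the $\mbox{mPH}$ denseness input, since the paper has not stated it as an explicit theorem; once this is secured (either by citation to the $\mbox{mPH}$ literature in the introduction or by a self-contained mixture-and-atom argument combined with univariate $\mbox{PH}$ denseness), the $\mbox{CSPH}$-to-$\mbox{mPH}$ reduction is routine, because letting the pre-shock exit rate $n$ grow collapses the common-shock time to zero while preserving the mixture structure that encodes the bivariate dependence.
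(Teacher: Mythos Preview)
Your proposal is correct and follows essentially the same approach as the paper: reduce to the weak denseness of the $\mbox{mPH}$ class (which the paper secures by citation to the $\mbox{mPH}$ literature), then approximate any $\mbox{mPH}$ law by $\mbox{CSPH}$ distributions whose pre-shock exit rate tends to infinity and apply Slutsky's theorem. The only cosmetic difference is that the paper uses a single pre-shock state with $\bm{T}=(-\lambda)$ and $\bm{U} = \lambda\,\bm{\pi}_{\text{mPH}}$, whereas you use $p$ pre-shock states with $\bm{\alpha} = \bm{\pi}_{\text{mPH}}$ and diagonal $\bm{T}_n = -n\bm{I}$, $\bm{U}_n = n\bm{I}$; both constructions collapse the shock time while delivering $K \sim \bm{\pi}_{\text{mPH}}$ independently of $\tau_{1,2}$.
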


\begin{proof}
The proof relies on the denseness property of the $\mbox{mPH}$ class of multivariate PH distributions introduced in \cite{bladt2023tractable}. This class is weakly dense in the set of all multivariate distributions on $[0,\infty)^d$. In our case, we apply this result in the bivariate case ($d=2$).

Let $F_{\text{mPH}}$ be an arbitrary bivariate $\mbox{mPH}$ distribution with parameters $(\bm{\alpha}_{\text{mPH}}, \bm{S}_1, \bm{S}_2)$, where $\bm{\alpha}_{\text{mPH}}$ is the $1 \times p$ initial probability vector and $\bm{S}_i$ is the $p \times p$ subintensity matrix defining the underlying univariate $\mbox{PH}$ structure for the $i$-th coordinate. A random vector $(Y_1, Y_2) \sim F_{\text{mPH}}$ can be realized by first selecting an initial state $J_0 \in \{1, \dots, p\}$ according to $\bm{\alpha}_{\text{mPH}}$, and then letting $Y_1$ and $Y_2$ be independent random variables following $\mbox{PH}(\bm{e}_{J_0}^\top, \bm{S}_1)$ and $\mbox{PH}(\bm{e}_{J_0}^\top, \bm{S}_2)$, respectively.

Now, for any $\lambda > 0$, consider a $\mbox{CSPH}$ distribution constructed as follows. The pre-shock state space is $\mathcal{E} = \{0\}$ (a single state) with initial distribution $\bm{\alpha}_{\text{CSPH}} = (1)$ and subintensity matrix $\bm{T} = (-\lambda)$. The post-shock state space $\mathcal{S} = \{1, \dots, p\}$ consists of the transient states of the target $\mbox{mPH}$ distribution. Transitions from $\mathcal{E}$ to $\mathcal{S}$ are governed by the $1 \times p$ matrix $\bm{U} = \lambda \bm{\alpha}_{\text{mPH}}$. The evolution within $\mathcal{S}$ after the shock is governed by the subintensity matrices $\bm{Q}_1 = \bm{S}_1$ and $\bm{Q}_2 = \bm{S}_2$. The scaling factors $a_1$ and $a_2$ remain fixed as per the theorem statement. This construction satisfies the condition $\bm{T}\bm{1} + \bm{U}\bm{1} = -\lambda + \lambda \bm{\alpha}_{\text{mPH}} \bm{1} = -\lambda + \lambda (1) = 0$, since $\bm{\alpha}_{\text{mPH}}$ is a probability vector. Let $(X_{1,\lambda}, X_{2,\lambda})$ denote a random vector following this specific $\mbox{CSPH}(\bm{\alpha}_{\text{CSPH}}, \bm{T}, \bm{U}, \bm{Q}_1, \bm{Q}_2, a_1, a_2)$ distribution, where $X_{i,\lambda} = a_i \tau_{1,2}^{(\lambda)} + \tilde{\tau}_i^{(\lambda)}$.

By construction, the common-shock time $\tau_{1,2}^{(\lambda)}$ (time spent in state 0) follows an Exponential distribution with rate $\lambda$. The state $K \in \mathcal{S}$ entered at time $\tau_{1,2}^{(\lambda)}$ has its distribution determined by the absorption probabilities from state 0 into $\mathcal{S}$. 
The probability of entering state $k$ is
\[
\P(K=k) = \bm{\alpha}_{\text{CSPH}} (-\bm{T})^{-1} \bm{u}^{(k)} = (1) (\lambda^{-1}) (\lambda (\bm{\alpha}_{\text{mPH}})_k) = (\bm{\alpha}_{\text{mPH}})_k.
\]
Thus, the state $K$ upon entering $\mathcal{S}$ is selected according to the target initial distribution $\bm{\alpha}_{\text{mPH}}$.

Given $K=k$, the residual times $\tilde{\tau}_1^{(\lambda)}$ and $\tilde{\tau}_2^{(\lambda)}$ (representing time spent in $\mathcal{S}$) are independent, following $\mbox{PH}(\bm{e}_k^\top, \bm{S}_1)$ and $\mbox{PH}(\bm{e}_k^\top, \bm{S}_2)$, respectively. 

Now, consider the limit as $\lambda \to \infty$. We have $\tau_{1,2}^{(\lambda)} \sim \text{Exp}(\lambda)$, which converges in probability to 0 (i.e., $\tau_{1,2}^{(\lambda)} \xrightarrow{P} 0$). Let $(Y_1, Y_2)$ be a random vector with the target $\mbox{mPH}$ distribution $F_{\text{mPH}}$. As established, the distribution of $(Y_1, Y_2)$ is obtained by choosing $K$ according to $\bm{\alpha}_{\text{mPH}}$ and then setting $Y_1$ and $Y_2$ as independent draws from $\mbox{PH}(\bm{e}_K^\top, \bm{S}_1)$ and $\mbox{PH}(\bm{e}_K^\top, \bm{S}_2)$, respectively. The random vector $(\tilde{\tau}_1^{(\lambda)}, \tilde{\tau}_2^{(\lambda)})$ (where the superscript $(\lambda)$ indicates association with the $\mbox{CSPH}$ parameter $\lambda$, though the distribution given $K$ is independent of $\lambda$) has the same distribution as $(Y_1, Y_2)$. Since $\tau_{1,2}^{(\lambda)} \xrightarrow{P} 0$ as $\lambda \to \infty$, by Slutsky's theorem,
\[
(X_{1,\lambda}, X_{2,\lambda}) = (a_1 \tau_{1,2}^{(\lambda)} + \tilde{\tau}_1^{(\lambda)}, a_2 \tau_{1,2}^{(\lambda)} + \tilde{\tau}_2^{(\lambda)}) \xrightarrow{d} (a_1 \cdot 0 + Y_1, a_2 \cdot 0 + Y_2) = (Y_1, Y_2),
\]
where $\xrightarrow{d}$ denotes convergence in distribution. The limit $(Y_1, Y_2)$ has the target mPH distribution $F_{\text{mPH}}$.

We have shown that any $\mbox{mPH}$ distribution $F_{\text{mPH}}$ can be obtained as the weak limit of a sequence of $\mbox{CSPH}$ distributions (indexed by $\lambda \to \infty$) with the given fixed scaling factors $a_1, a_2$. Since the class of $\mbox{mPH}$ distributions is weakly dense in the set of all bivariate distributions on $[0,\infty)^2$ \citep[see][]{bladt2023tractable}, it follows that the class of $\mbox{CSPH}$ distributions (with fixed $a_1, a_2 > 0$) is also weakly dense in this space. This completes the proof.
\end{proof}

\begin{remark}\rm
The denseness property implies that $\mbox{CSPH}$ distributions can, in principle, be employed to model any type of dependence structure on $[0,\infty)^2$. This is noteworthy because common-shock modeling often intuitively suggests positive correlation between components. While the interpretation of the common shock naturally lends itself to positive dependence, the $\mbox{CSPH}$ framework is flexible enough to accommodate other dependence structures, including negative dependence, by appropriately choosing the underlying phase-type parameters.
\end{remark}

\subsection{Size-Biased Esscher Transform Distribution and the Master Formula}
\label{sec:size-biased-esscher}

In many applications, especially in risk theory, actuarial science, and financial mathematics, it is useful to consider transformed versions of a probability distribution that emphasize large outcomes or modify the probability measure to reflect risk preferences. Two common such transforms are the size-biased distribution and the Esscher transform. 

A size-biased distribution reweights the original probability density function in proportion to powers of the size of the outcome. If $X$ is a nonnegative random variable with probability density function $f_X(x)$ and finite moment $\mathbb{E}[X^n]$ for some $n \in \{0,1,2,\dots\} =: \mathbb{N}_0$, its size-biased version has probability density function given by
\begin{equation*}
\label{eq:size-biased-density}
f_X^{\text{SB}}(x) = \frac{x^n \, f_X(x)}{\mathbb{E}[X^n]}.
\end{equation*}
This transform naturally arises in contexts where larger outcomes are more likely to be observed (for instance, when sampling by size).

The Esscher transform is a change-of-measure technique that tilts the original probability density function by an exponential factor. For a given random variable $X$ with probability density function $f_X(x)$, the Esscher-transformed probability density function with parameter $\theta$ is defined by
\begin{equation*}
\label{eq:esscher-density}
f_X^{\text{Ess}}(x) = \frac{e^{-\theta x} \, f_X(x)}{\mathbb{E}[e^{-\theta X}]},
\end{equation*}
provided that $\mathbb{E}[e^{-\theta X}]$ is finite. This transform is widely used in premium calculation and option pricing as it adjusts the measure to incorporate a market price of risk. Both transforms can be extended to the multivariate case as follows.

\begin{definition}[Multivariate Size-Biased Esscher Transform]
\label{def:multivariate-sb-esscher}
Let $(\tau_{1,2}, \tilde{\tau}_1, \tilde{\tau}_2)$ be a multivariate random vector with joint probability density function $g(x_{1,2}, x_1, x_2)$ given by \eqref{eq:trivariate_density_g}. Then, for $x_{1,2},x_1,x_2\ge 0$, the multivariate size-biased Esscher-transformed probability density function of $(\tau_{1,2}, \tilde{\tau}_1, \tilde{\tau}_2)$ is defined as
\begin{equation}
\label{eq:sb-esscher-density}
g^{\text{SB-Ess}}(x_{1,2}, x_1, x_2) =
\frac{x_{1,2}^{n_{1,2}} \, x_1^{n_1} \, x_2^{n_2} \, e^{-\theta_{1,2} x_{1,2} -\theta_1 x_1 -\theta_2 x_2} \, g(x_{1,2}, x_1, x_2)}
{\mathbb{E}\big[\tau_{1,2}^{n_{1,2}} \, \tilde{\tau}_1^{n_1} \, \tilde{\tau}_2^{n_2} \, e^{-\theta_{1,2} \tau_{1,2} -\theta_1 \tilde{\tau}_1 -\theta_2 \tilde{\tau}_2}\big]},
\end{equation}
where $n_{1,2}, n_1, n_2 \in \mathbb{N}_0$ are the size-biasing powers, and $\theta_{1,2}, \theta_1, \theta_2$ are the Esscher transform parameters (typically $\theta_j \ge 0$ for risk applications, but defined as long as the expectation in the denominator is finite and non-zero).
\end{definition}

While the numerator in \eqref{eq:sb-esscher-density} involves the probability density function $g$ from \eqref{eq:trivariate_density_g}, its computation for practical purposes, especially the expectation in the denominator, requires further development. The next result exhibits a form for the unnormalized size-biased Esscher transformed probability density function that lends itself well to these computations.

\begin{theorem}\label{thm:sb-Ess1_numerator}
For $n_{1,2}, n_1, n_2 \in \mathbb{N}_0$ and parameters $\theta_{1,2}, \theta_1, \theta_2$, the numerator of the size-biased Esscher transformed probability density function is given by
\begin{align*}
&x_{1,2}^{n_{1,2}} \, x_1^{n_1} \, x_2^{n_2} \, e^{-\theta_{1,2} x_{1,2} -\theta_1 x_1 -\theta_2 x_2} \, g(x_{1,2}, x_1, x_2)\\
&\quad = (n_{1,2}!\,n_1!\,n_2!)\,\bm{\alpha}^{(n_{1,2})} e^{\bm{T}^{(n_{1,2}, \theta_{1,2})} x_{1,2}}
\bm{I}^{(\downarrow,n_{1,2})}
\bm{P} \\
&\qquad \otimes
\left(\bm{I}^{(\leftarrow,n_{1})} \, e^{\bm{Q}_1^{(n_1, \theta_1)} x_1} \bm{q}_1^{(n_1)}\right)
\otimes
\left(\bm{I}^{(\leftarrow,n_{2})}\, e^{\bm{Q}_2^{(n_2, \theta_2)} x_2} \bm{q}_2^{(n_2)}\right),
\end{align*}
where $g(x_{1,2}, x_1, x_2)$ is defined in \eqref{eq:trivariate_density_g}, and we define the following $(N+1)$-block matrices and vectors (where $N$ takes values $n_{1,2}, n_1, n_2$ as appropriate for each component):
\begin{enumerate}
\item The augmented initial vector $\bm{\alpha}^{(N)}$ (for $\tau_{1,2}$) is a $1 \times (N+1)p_0$ row vector (if $\bm{\alpha}$ is $1 \times p_0$) as
\[
\bm{\alpha}^{(N)} =
\begin{pmatrix}
\bm{\alpha} & \bm{0} & \bm{0} & \cdots & \bm{0}
\end{pmatrix}.
\]
\item The augmented subintensity matrix $\bm{M}^{(N,\theta)}$ (representing $\bm{T}^{(n_{1,2},\theta_{1,2})}$, $\bm{Q}_1^{(n_1,\theta_1)}$, or $\bm{Q}_2^{(n_2,\theta_2)}$) is an $(N+1)p \times (N+1)p$ matrix (if $\bm{M}$ is $p \times p$) as
\[
\bm{M}^{(N,\theta)} =
\begin{pmatrix}
\bm{M} - \theta \bm{I} & \bm{I} & \bm{0} & \cdots & \bm{0}\\[1mm]
\bm{0} & \bm{M} - \theta \bm{I} & \bm{I} & \cdots & \bm{0}\\[1mm]
\vdots & \vdots & \ddots & \ddots & \vdots\\[1mm]
\bm{0} & \bm{0} & \cdots & \bm{M} - \theta \bm{I} & \bm{I}\\[1mm]
\bm{0} & \bm{0} & \cdots & \bm{0} & \bm{M} - \theta \bm{I}
\end{pmatrix}.
\]
\item For $i=1,2$, the augmented exit vectors $\bm{q}_i^{(N)}$ (for $\tilde{\tau}_i$) are $(N+1)p_i \times 1$ column vectors (if $\bm{q}_i$ is $p_i \times 1$) as
\[
\bm{q}_i^{(N)} =
\begin{pmatrix}
\bm{0}\\[2mm]\bm{0}\\[2mm]\vdots\\[2mm]\bm{q}_i
\end{pmatrix}.
\]
\item The selector matrices $\bm{I}^{(\downarrow,N)}$ and $\bm{I}^{(\leftarrow,N)}$ are block matrices. $\bm{I}^{(\downarrow,N)}$ is an $(N+1)p \times p$ matrix with $\bm{I}$ (a $p \times p$ identity matrix) in the last block and zeros elsewhere. $\bm{I}^{(\leftarrow,N)}$ is a $p \times (N+1)p$ matrix with $\bm{I}$ in the first block and zeros elsewhere. More specifically, they are given by
\[
\bm{I}^{(\downarrow,N)} =
\begin{pmatrix}
\bm{0} \\ \bm{0} \\ \vdots \\ \bm{I}
\end{pmatrix},
\quad
\bm{I}^{(\leftarrow,N)} =
\begin{pmatrix}
\bm{I} & \bm{0} & \cdots & \bm{0}
\end{pmatrix}.
\]
\end{enumerate}
\end{theorem}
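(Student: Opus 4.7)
My plan is to reduce the statement to a single block-matrix exponential identity, and then apply it separately to each of the three exponential factors appearing in the density $g(x_{1,2}, x_1, x_2)$ from Theorem~\ref{thm:joint_density_scaled_block}.

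The central building block is the identity, valid for any $p \times p$ matrix $\bm{M}$, scalar $\theta$, and $N \in \mathbb{N}_0$,
\[
x^N e^{-\theta x}\, e^{\bm{M} x} \;=\; N! \cdot \bm{I}^{(\leftarrow, N)}\, e^{\bm{M}^{(N,\theta)} x}\, \bm{I}^{(\downarrow, N)}.
\]
To prove this, I would recognize $\bm{M}^{(N,\theta)}$ as the Kronecker sum $\bm{N}_{N+1} \oplus (\bm{M} - \theta \bm{I})$, where $\bm{N}_{N+1}$ is the $(N+1) \times (N+1)$ nilpotent matrix with ones on the superdiagonal and zeros elsewhere. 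By the standard identity $e^{\bm{A} \oplus \bm{B}} = e^{\bm{A}} \otimes e^{\bm{B}}$, this yields $e^{\bm{M}^{(N,\theta)} x} = e^{\bm{N}_{N+1} x} \otimes e^{(\bm{M} - \theta \bm{I}) x}$. Because $\bm{N}_{N+1}^{N+1} = \bm{0}$, the exponential $e^{\bm{N}_{N+1} x}$ is upper-triangular with entry $x^k / k!$ on the $k$-th superdiagonal; in particular its $(1, N+1)$ entry equals $x^N / N!$. Left- and right-multiplying by the selectors $\bm{I}^{(\leftarrow, N)}$ and $\bm{I}^{(\downarrow, N)}$ extracts the top-right $p \times p$ block, which is $(x^N / N!)\, e^{(\bm{M} - \theta \bm{I}) x} = (x^N e^{-\theta x} / N!)\, e^{\bm{M} x}$, since $\bm{M}$ and $\theta \bm{I}$ commute.

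With this building block established, the rest is assembly. Starting from $g(x_{1,2}, x_1, x_2) = \bm{\alpha}\, e^{\bm{T} x_{1,2}}\, \bm{P} \bigl((e^{\bm{Q}_1 x_1} \bm{q}_1) \otimes (e^{\bm{Q}_2 x_2} \bm{q}_2)\bigr)$, I would substitute the identity into each of the three exponential factors: once for $x_{1,2}^{n_{1,2}} e^{-\theta_{1,2} x_{1,2}} e^{\bm{T} x_{1,2}}$, and once for each $x_i^{n_i} e^{-\theta_i x_i} e^{\bm{Q}_i x_i}$ with $i = 1, 2$. Three cosmetic identifications then finish the proof: $\bm{\alpha}\, \bm{I}^{(\leftarrow, n_{1,2})} = \bm{\alpha}^{(n_{1,2})}$ follows from the block structure of the selector acting on the row vector $\bm{\alpha}$; $\bm{I}^{(\downarrow, n_i)} \bm{q}_i = \bm{q}_i^{(n_i)}$ by direct inspection of the block definitions; and the mixed-product rule for Kronecker products lets me distribute the selectors $\bm{I}^{(\leftarrow, n_1)}$ and $\bm{I}^{(\leftarrow, n_2)}$ into the individual arms of the $\otimes$, yielding the expression in the statement.

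The only step with any mathematical content is the block-bidiagonal exponential identity; everything else is bookkeeping. The Kronecker-sum shortcut reduces that identity to essentially one line, so no substantive obstacle is anticipated. An alternative route, slightly more cumbersome, would be induction on $N$ using the block ODE satisfied by the entries of $e^{\bm{M}^{(N,\theta)} x}$, but the Kronecker-sum argument is cleaner and is the one I would adopt.
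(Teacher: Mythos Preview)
Your proposal is correct and follows essentially the same strategy as the paper: isolate the block identity $\bm{I}^{(\leftarrow,N)} e^{\bm{M}^{(N,\theta)} x}\, \bm{I}^{(\downarrow,N)} = (x^N/N!)\,e^{-\theta x} e^{\bm{M}x}$ and then substitute it into each of the three exponential factors of $g$, absorbing the selectors into $\bm{\alpha}^{(n_{1,2})}$ and $\bm{q}_i^{(n_i)}$. The only difference is that the paper quotes this block identity as a known PH-calculus fact (pointing to Van Loan's formula as background), whereas you supply a self-contained derivation via the Kronecker-sum decomposition $\bm{M}^{(N,\theta)} = \bm{N}_{N+1} \oplus (\bm{M}-\theta\bm{I})$, which is a clean and slightly more explicit justification of the same lemma.
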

\begin{proof}
The result uses the known property for PH calculus that relates moments and exponential tilting to augmented matrix exponentials. For a subintensity matrix $\bm{M}$, exit vector $\bm{m}$,  and initial vector $\bm{\pi}$, the quantity $x^N e^{-\theta x} \bm{\pi} e^{\bm{M}x} \bm{m}$ can be expressed using augmented matrices. The $(1, N+1)$ block (or $(0,N)$ block depending on indexing) of the matrix exponential $e^{\bm{M}^{(N,\theta)} x}$ is given by $e^{(\bm{M}-\theta\bm{I})x} \frac{x^N}{N!}$ { \citep[cf.][Lemma 3.1]{cheung2022multivariate}}.
Thus, we can write
\[
\bm{\alpha} \left( \bm{I}^{(\leftarrow,N)} e^{\bm{M}^{(N,\theta)} x} \bm{I}^{(\downarrow,N)} \right) = \bm{\alpha} \left( e^{(\bm{M}-\theta\bm{I})x} \frac{x^N}{N!} \right) = \frac{x^N}{N!} e^{-\theta x} \bm{\alpha} e^{\bm{M}x}.
\]
Applying this to $\tau_{1,2}$ with $N=n_{1,2}$, $\bm{M}=\bm{T}$, $\theta=\theta_{1,2}$, the factor $\bm{\alpha}^{(n_{1,2})} e^{\bm{T}^{(n_{1,2}, \theta_{1,2})} x_{1,2}} \bm{I}^{(\downarrow,n_{1,2})}$ effectively computes $ \bm{\alpha} e^{(\bm{T}-\theta_{1,2}\bm{I})x_{1,2}} \frac{x_{1,2}^{n_{1,2}}}{n_{1,2}!}$. Multiplying by $n_{1,2}!$ yields $x_{1,2}^{n_{1,2}} e^{-\theta_{1,2}x_{1,2}} \bm{\alpha} e^{\bm{T}x_{1,2}}$.

For the terms involving $\tilde{\tau}_i$, we have $e^{\bm{Q}_i x_i} \bm{q}_i$. The expression $\bm{I}^{(\leftarrow,n_i)} e^{\bm{Q}_i^{(n_i, \theta_i)} x_i} \bm{q}_i^{(n_i)}$ corresponds to $\left( \bm{I}^{(\leftarrow,n_i)} e^{\bm{Q}_i^{(n_i, \theta_i)} x_i} \bm{I}^{(\downarrow,n_i)} \right) \bm{q}_i$. This evaluates to $\left( e^{(\bm{Q}_i-\theta_i\bm{I})x_i} \frac{x_i^{n_i}}{n_i!} \right) \bm{q}_i$. Multiplying by $n_i!$ gives $x_i^{n_i} e^{-\theta_i x_i} e^{\bm{Q}_i x_i} \bm{q}_i$.

Combining these parts with the structure of $$g(x_{1,2}, x_1, x_2) = \bm{\alpha} e^{\bm{T} x_{1,2}} \bm{P} \left( (e^{\bm{Q}_1 x_1}\bm{q}_1) \otimes (e^{\bm{Q}_2 x_2}\bm{q}_2) \right)$$ and the factors $n_{1,2}!, n_1!, n_2!$ leads to the stated result. The use of Van Loan's integral formula \citep{van1978computing} provides the foundation for the structure of such matrix exponential blocks, although here we appeal to the direct result for this specific block structure.
\end{proof}

These relations yield the following {master formula} for computing joint moments of tilted and truncated versions of $(\tau_{1,2}, \tilde{\tau}_1, \tilde{\tau}_2)$.

\begin{theorem}[Master Formula]\label{thm:master-formula}
Let $\bm{n}=(n_{1,2},n_1,n_2)$ with $n_j \in \mathbb{N}_0$, $\bm{\theta}=(\theta_{1,2},\theta_1,\theta_2)$, and $\bm{y}=(y_{1,2},y_1,y_2)$ with $y_j \ge 0$. Then, assuming convergence of the integrals and invertibility of the relevant matrices,
\begin{align}
\label{eq:master-formula}
&\mathcal{M}_{\bm{n}, \bm{\theta}, \bm{y}} := \mathbb{E}\Biggl[\left(\tau_{1,2}^{n_{1,2}} e^{-\theta_{1,2} \tau_{1,2}}\right)
\left(\tilde{\tau}_1^{\,n_1} e^{-\theta_1 \tilde{\tau}_1}\right)
\left(\tilde{\tau}_2^{\,n_2} e^{-\theta_2 \tilde{\tau}_2}\right)
\, \mathds{1}_{\{\tau_{1,2} > y_{1,2},\, \tilde{\tau}_1 > y_1,\, \tilde{\tau}_2 > y_2\}}\Biggr] \nonumber \\
&\quad = (n_{1,2}!\,n_1!\,n_2!)\,\bm{\alpha}^{(n_{1,2})}
 e^{\bm{T}^{(n_{1,2}, \theta_{1,2})} y_{1,2}}\, \left(-\bm{T}^{(n_{1,2}, \theta_{1,2})}\right)^{-1}
\bm{I}^{(\downarrow,n_{1,2})}
\bm{P} \nonumber \\
&\qquad \otimes \left(\bm{I}^{(\leftarrow,n_{1})} \, e^{\bm{Q}_1^{(n_1, \theta_1)} y_1}\,\left(-\bm{Q}_1^{(n_1, \theta_1)}\right)^{-1}\,\bm{q}_1^{(n_1)} \right) \nonumber \\
&\qquad \otimes \left(\bm{I}^{(\leftarrow,n_{2})} \, e^{\bm{Q}_2^{(n_2, \theta_2)} y_2}\,\left(-\bm{Q}_2^{(n_2, \theta_2)}\right)^{-1}\,\bm{q}_2^{(n_2)} \right).
\end{align}
This is derived by integrating the expression from Theorem \ref{thm:sb-Ess1_numerator} over the region $x_{1,2} > y_{1,2}, x_1 > y_1, x_2 > y_2$, using the property $\int_y^\infty e^{\bm{A}x} \dd x = e^{\bm{A}y}(-\bm{A})^{-1}$ for a nonsingular matrix $\bm{A}$.
\end{theorem}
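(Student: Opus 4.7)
My plan is to reduce the master formula to Theorem \ref{thm:sb-Ess1_numerator} combined with the standard matrix identity $\int_{y}^{\infty} e^{\bm{A}x}\,\dd x = e^{\bm{A}y}(-\bm{A})^{-1}$ (valid whenever the spectrum of $\bm{A}$ lies strictly in the left half-plane). Specifically, I would begin by writing the expectation in its integral form,
\[
\mathcal{M}_{\bm{n}, \bm{\theta}, \bm{y}}
= \int_{y_{1,2}}^{\infty}\!\!\int_{y_1}^{\infty}\!\!\int_{y_2}^{\infty}
x_{1,2}^{n_{1,2}} x_1^{n_1} x_2^{n_2} e^{-\theta_{1,2}x_{1,2} - \theta_1 x_1 - \theta_2 x_2}\, g(x_{1,2}, x_1, x_2)\,\dd x_2\,\dd x_1\,\dd x_{1,2},
\]
and substitute the closed-form expression for the integrand provided by Theorem \ref{thm:sb-Ess1_numerator}.

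The next step is to exploit the tensor-product structure of that expression. The dependence on $x_{1,2}$ is confined to the left factor $\bm{\alpha}^{(n_{1,2})} e^{\bm{T}^{(n_{1,2},\theta_{1,2})}x_{1,2}}$, while the dependence on $x_1$ and $x_2$ lives in the two Kronecker factors $\bm{I}^{(\leftarrow,n_i)} e^{\bm{Q}_i^{(n_i,\theta_i)}x_i}\bm{q}_i^{(n_i)}$. By Fubini and the bilinearity of the Kronecker product, the triple integral factorizes into a product of three independent matrix integrals of the form $\int_{y}^{\infty} e^{\bm{A}x}\,\dd x$, with $\bm{A}$ equal to $\bm{T}^{(n_{1,2},\theta_{1,2})}$, $\bm{Q}_1^{(n_1,\theta_1)}$, and $\bm{Q}_2^{(n_2,\theta_2)}$ respectively. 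Applying the identity $\int_{y}^{\infty} e^{\bm{A}x}\,\dd x = e^{\bm{A}y}(-\bm{A})^{-1}$ to each of these, and reinserting the constant $\bm{I}^{(\downarrow,n_{1,2})} \bm{P}$ and the terminal vectors $\bm{q}_i^{(n_i)}$ in their original positions, reproduces the right-hand side of \eqref{eq:master-formula} verbatim.

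The main technical obstacle is justifying convergence and the invocation of the identity $\int_{y}^{\infty} e^{\bm{A}x}\,\dd x = e^{\bm{A}y}(-\bm{A})^{-1}$, which requires $\bm{A}$ to be Hurwitz (all eigenvalues in the open left half-plane). For each augmented matrix, this is a genuine but mild condition: since $\bm{T}^{(n,\theta)}$ and $\bm{Q}_i^{(n_i,\theta_i)}$ are block upper-triangular with identical diagonal blocks $\bm{T} - \theta\bm{I}$ and $\bm{Q}_i - \theta_i\bm{I}$, their spectra coincide with those of these diagonal blocks. Because $\bm{T}$ and $\bm{Q}_i$ are subintensity matrices with eigenvalues of strictly negative real part, the required Hurwitz property (and hence invertibility of $-\bm{T}^{(n,\theta)}$ and $-\bm{Q}_i^{(n_i,\theta_i)}$, as well as absolute integrability of the integrand) holds for every $\theta, \theta_i$ in a neighborhood of $0$ — precisely the regime flagged in the statement. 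Once this is noted, the computation reduces to the algebraic manipulations above, which involve no further subtleties beyond carefully tracking the positions of the selector matrices $\bm{I}^{(\downarrow,\cdot)}$ and $\bm{I}^{(\leftarrow,\cdot)}$ through the Kronecker products.
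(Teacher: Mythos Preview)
Your proposal is correct and mirrors the paper's own argument exactly: the paper states (without a separate proof environment) that the formula follows by integrating the expression from Theorem~\ref{thm:sb-Ess1_numerator} over $x_{1,2}>y_{1,2}$, $x_1>y_1$, $x_2>y_2$ and applying $\int_y^\infty e^{\bm{A}x}\,\dd x = e^{\bm{A}y}(-\bm{A})^{-1}$. Your added remarks on the Hurwitz property of the augmented block-triangular matrices and the resulting validity of Fubini are a welcome elaboration, but the core route is identical.
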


\begin{remark}\rm
Choosing $\bm{y}=\bm{0}=(0,0,0)$ in \eqref{eq:master-formula} yields the expectation in the denominator of \eqref{eq:sb-esscher-density}. Thus, together with Theorem \ref{thm:sb-Ess1_numerator}, we can compute the size-biased Esscher transformed probability density function $g^{\text{SB-Ess}}$ explicitly. By adjusting the parameters $\bm{n}$ and $\bm{\theta}$, various transforms (size-biasing, Esscher transform, or combinations) are naturally incorporated into this matrix-analytic framework.
\end{remark}

\subsection{Risk Measures Derived from the Master Formula}
\label{sec:risk-measures-master}

The master formula \eqref{eq:master-formula} is a versatile tool that enables the calculation of a range of risk measures associated with the $\mbox{CSPH}$ distribution, particularly for the components $\tau_{1,2}$, $\tilde{\tau}_1$, and $\tilde{\tau}_2$, and sums thereof. The development and analysis of risk measures are crucial in actuarial science (see, e.g., \cite{ArtznerMF1999, goovaerts2010RM}, \cite{Cousin2014MultiCTE, Cossette2016MultiTVaR}). In what follows, $\bm{0}$ as a subscript or argument denotes appropriate zero vectors or scalars, e.g., $\bm{y}=\bm{0}$ means $(y_{1,2},y_1,y_2)=(0,0,0)$. In this subsection, we work with the general form $X_i = a_i \tau_{1,2} + \tilde{\tau}_i$ as defined in Section \ref{sec:common-shocks}, incorporating the scaling factors $a_1, a_2 > 0$.  

\begin{example}\label{ex:risk_measure}
We consider a CSPH model with $|\mathcal{E}| = 3$ and $|\mathcal{S}| = 2$, and parameters given by
\begin{gather*} 
	\bfalp=\left(
	1, \,0,\, 0 \right)\,, \quad  
	\bfT=\left( \begin{array}{cccc}
	-1/2 & 1/4 & 1/8 \\
	1/8  & -5/8 & 1/4   \\
	1/8 & 1/8 & -3/4
	\end{array} \right) \,, 	\quad 
	\bfU=\left( \begin{array}{cc}
	1/10 & 1/40  \\
	1/8 & 1/8   \\
	1/8 & 3/8 
	\end{array} \right) \,, \\
	\bfQ_1=\left( \begin{array}{cccc}
	-3/8 & 3/8  \\
	0  & -3/8 
	\end{array} \right) \,, \quad 
	\bfQ_2=\left( \begin{array}{cccc}
	-1/2 & 1/4  \\
	1/4  & -1/2 
	\end{array} \right) \,, \\ 
	a_1= 2 \,, \quad a_2 = 1\,.
\end{gather*}
This model will be used throughout the remainder of this section to demonstrate the computation and interpretation of the various risk measures associated to CSPH distributions.
\end{example}

\subsubsection{Pearson Correlation Between $X_1$ and $X_2$}

The Pearson correlation coefficient measures the linear dependence between $X_1$ and $X_2$. Its calculation relies on first and second moments, as well as cross-moments. It is defined as
\[
\rho(X_1, X_2) = \frac{\mbox{Cov}(X_1, X_2)}{\sqrt{\mbox{Var}(X_1) \cdot \mbox{Var}(X_2)}} = \frac{\mathbb{E}[X_1 X_2] - \mathbb{E}[X_1] \mathbb{E}[X_2]}{\sqrt{(\mathbb{E}[X_1^2] - (\mathbb{E}[X_1])^2) \cdot (\mathbb{E}[X_2^2] - (\mathbb{E}[X_2])^2) }}.
\]
To compute this, we need various moments, which can be derived from the master formula by setting $\bm{\theta}=\bm{0}$ and $\bm{y}=\bm{0}$
\begin{align*}
\mathbb{E}[X_1] &= \mathbb{E}[a_1 \tau_{1,2} + \tilde{\tau}_1] = a_1 \mathbb{E}[\tau_{1,2}] + \mathbb{E}[\tilde{\tau}_1] \\
&= a_1 \mathcal{M}_{(1,0,0), \bm{0}, \bm{0}} + \mathcal{M}_{(0,1,0), \bm{0}, \bm{0}} \\
\mathbb{E}[X_2] &= \mathbb{E}[a_2 \tau_{1,2} + \tilde{\tau}_2] = a_2 \mathbb{E}[\tau_{1,2}] + \mathbb{E}[\tilde{\tau}_2] \\
&= a_2 \mathcal{M}_{(1,0,0), \bm{0}, \bm{0}} + \mathcal{M}_{(0,0,1), \bm{0}, \bm{0}} \\
\mathbb{E}[X_1^2] &= \mathbb{E}[(a_1 \tau_{1,2} + \tilde{\tau}_1)^2] = a_1^2 \mathbb{E}[\tau_{1,2}^2] + 2a_1 \mathbb{E}[\tau_{1,2}\tilde{\tau}_1] + \mathbb{E}[\tilde{\tau}_1^2] \\
&= a_1^2 \mathcal{M}_{(2,0,0), \bm{0}, \bm{0}} + 2a_1 \mathcal{M}_{(1,1,0), \bm{0}, \bm{0}} + \mathcal{M}_{(0,2,0), \bm{0}, \bm{0}} \\
\mathbb{E}[X_2^2] &= \mathbb{E}[(a_2 \tau_{1,2} + \tilde{\tau}_2)^2] = a_2^2 \mathbb{E}[\tau_{1,2}^2] + 2a_2 \mathbb{E}[\tau_{1,2}\tilde{\tau}_2] + \mathbb{E}[\tilde{\tau}_2^2] \\
&= a_2^2 \mathcal{M}_{(2,0,0), \bm{0}, \bm{0}} + 2a_2 \mathcal{M}_{(1,0,1), \bm{0}, \bm{0}} + \mathcal{M}_{(0,0,2), \bm{0}, \bm{0}} \\
\mathbb{E}[X_1 X_2] &= \mathbb{E}[(a_1 \tau_{1,2} + \tilde{\tau}_1)(a_2 \tau_{1,2} + \tilde{\tau}_2)] \\
&= a_1 a_2 \mathbb{E}[\tau_{1,2}^2] + a_1 \mathbb{E}[\tau_{1,2}\tilde{\tau}_2] + a_2 \mathbb{E}[\tilde{\tau}_1\tau_{1,2}] + \mathbb{E}[\tilde{\tau}_1\tilde{\tau}_2] \\
&= a_1 a_2 \mathcal{M}_{(2,0,0),\bm{0},\bm{0}} + a_1 \mathcal{M}_{(1,0,1),\bm{0},\bm{0}} + a_2 \mathcal{M}_{(1,1,0),\bm{0},\bm{0}} + \mathcal{M}_{(0,1,1),\bm{0},\bm{0}}.
\end{align*}

For the model specified in Example~\ref{ex:risk_measure}, we obtain $\mathbb{E}[X_1] = 12.87$, $\mathbb{E}[X_2] = 8.44$, $\mbox{Var}(X_1) = 69.51$, $\mbox{Var}(X_2) = 30.85$,  and $\rho(X_1, X_2) = 0.6291$. Notably, we have that $\mathbb{E}[\tau_{1,2}] = 4.44$, highlighting the significant contribution of the common shock component in the overall risk of the model. 

\subsubsection{Entropic Risk Measure}
The entropic risk measure, related to exponential utility functions and the Esscher transform, provides a method for evaluating random outcomes by considering the decision-maker's risk aversion across the entire distribution. This measure is significant in the framework of convex risk measures and finds applications in actuarial science and finance \citep{follmer2002convex, ben2007old, follmer2011entropic}. For a parameter $\vartheta > 0$, which reflects the degree of risk aversion, the entropic risk measure for $X_i$ is defined as
\[
\mbox{ERM}_{\vartheta}(X_i) = \frac{1}{\vartheta} \log \mathbb{E}[e^{-\vartheta X_i}], \quad i=1,2.
\]
The expectation $\mathbb{E}[e^{-\vartheta X_i}]$ can be computed using the master formula. For $X_1$, this is
\[
\mathbb{E}[e^{-\vartheta X_1}] = \mathbb{E}[e^{-\vartheta (a_1 \tau_{1,2}+\tilde{\tau}_1)}] = \mathbb{E}[e^{-a_1\vartheta \tau_{1,2}}e^{-\vartheta \tilde{\tau}_1}] = \mathcal{M}_{\bm{0},(a_1\vartheta,\vartheta,0),\bm{0}}.
\]
Similarly for $X_2$, it is
\[
\mathbb{E}[e^{-\vartheta X_2}] = \mathbb{E}[e^{-\vartheta (a_2 \tau_{1,2}+\tilde{\tau}_2)}] = \mathbb{E}[e^{-a_2\vartheta \tau_{1,2}}e^{-\vartheta \tilde{\tau}_2}] = \mathcal{M}_{\bm{0},(a_2\vartheta,0,\vartheta),\bm{0}}.
\]
The conditional version, $\mbox{ERM}_{\vartheta}(X_i \mid \tau_{1,2} > a)$, provides insight into how the risk assessment of $X_i$ changes given that the common shock has not occurred by time $a$. It quantifies the entropic risk measure in a scenario where the initial period of common exposure has extended beyond a certain duration, thereby isolating the risk profile under that specific condition. The conditional entropic risk measure is given by
\[
\mbox{ERM}_{\vartheta}(X_i \mid \tau_{1,2} > a) = \frac{1}{\vartheta} \log \mathbb{E}[e^{-\vartheta X_i} \mid \tau_{1,2} > a] = \frac{1}{\vartheta} \log \left( \frac{\mathbb{E}[e^{-\vartheta X_i}\mathds{1}_{\{\tau_{1,2}>a\}}]}{\P(\tau_{1,2}>a)} \right).
\]
The denominator is $\P(\tau_{1,2}>a) = \mathcal{M}_{\bm{0},\bm{0},(a,0,0)}$. The numerator for $X_1$ is
\[
\mathbb{E}[e^{-\vartheta X_1}\mathds{1}_{\{\tau_{1,2}>a\}}] = \mathbb{E}[e^{-a_1\vartheta \tau_{1,2}}e^{-\vartheta \tilde{\tau}_1}\mathds{1}_{\{\tau_{1,2}>a\}}] = \mathcal{M}_{\bm{0},(a_1\vartheta,\vartheta,0),(a,0,0)}.
\]
And for $X_2$, it is
\[
\mathbb{E}[e^{-\vartheta X_2}\mathds{1}_{\{\tau_{1,2}>a\}}] = \mathbb{E}[e^{-a_2\vartheta \tau_{1,2}}e^{-\vartheta \tilde{\tau}_2}\mathds{1}_{\{\tau_{1,2}>a\}}] = \mathcal{M}_{\bm{0},(a_2\vartheta,0,\vartheta),(a,0,0)}.
\]

Figure~\ref{fig:erm} displays the entropic risk measure $\mbox{ERM}_{\vartheta}(X_1 \mid \tau_{1,2} > a) $ (left) and $\mbox{ERM}_{\vartheta}(X_2 \mid \tau_{1,2} > a) $ (right) for the model specified in Example~\ref{ex:risk_measure}. The plots show the risk measure's behavior across a range of values for the risk aversion parameter $\vartheta$ and for different thresholds $a$ on the common shock. We observe that the entropic risk increases with both $\vartheta$ and $a$, capturing the compounding effect of increased risk sensitivity and more extreme conditioning. Note that the case $a = 0$ corresponds to the unconditional entropic risk.
\begin{figure}[!htbp]
\centering
\includegraphics[width=0.49\textwidth]{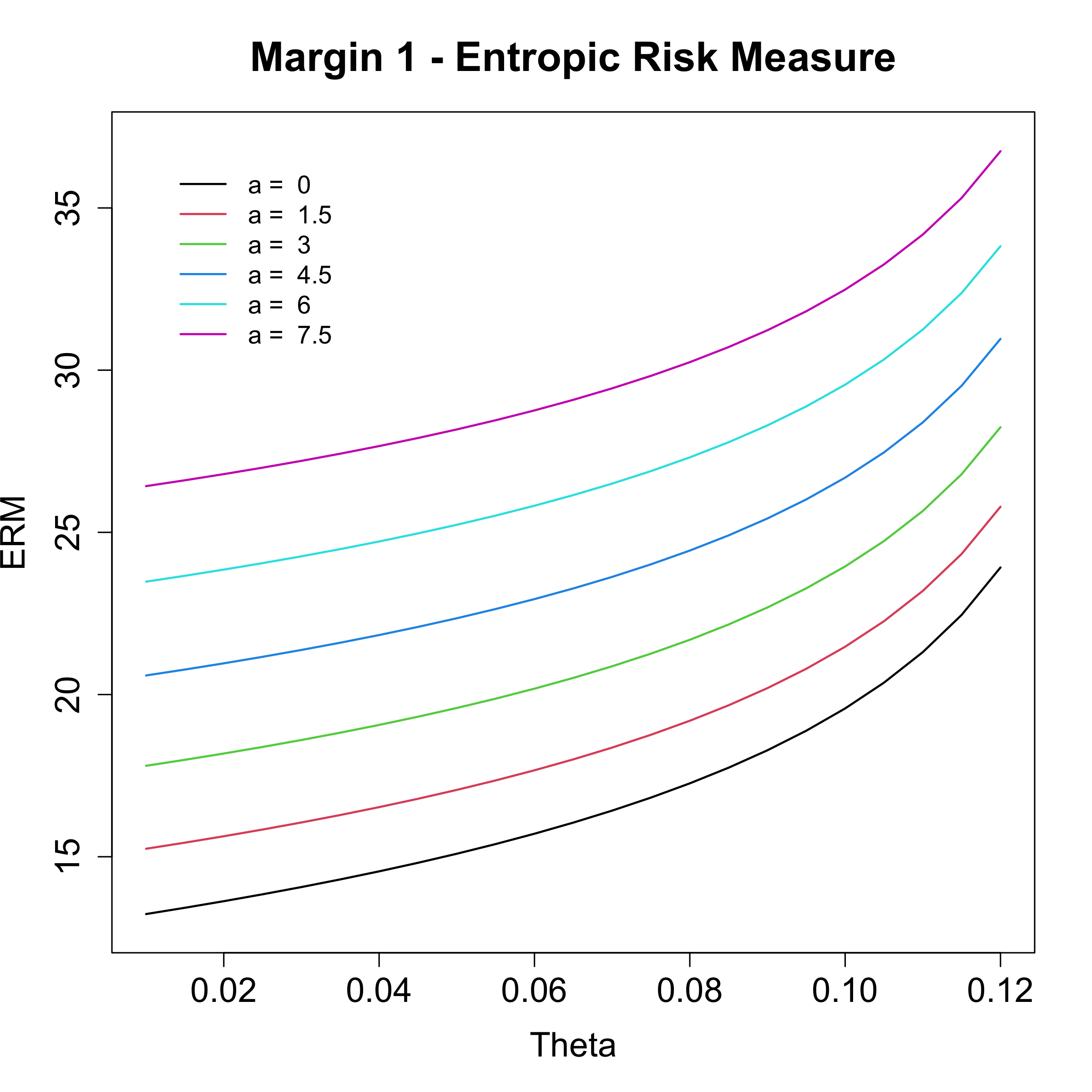}
\includegraphics[width=0.49\textwidth]{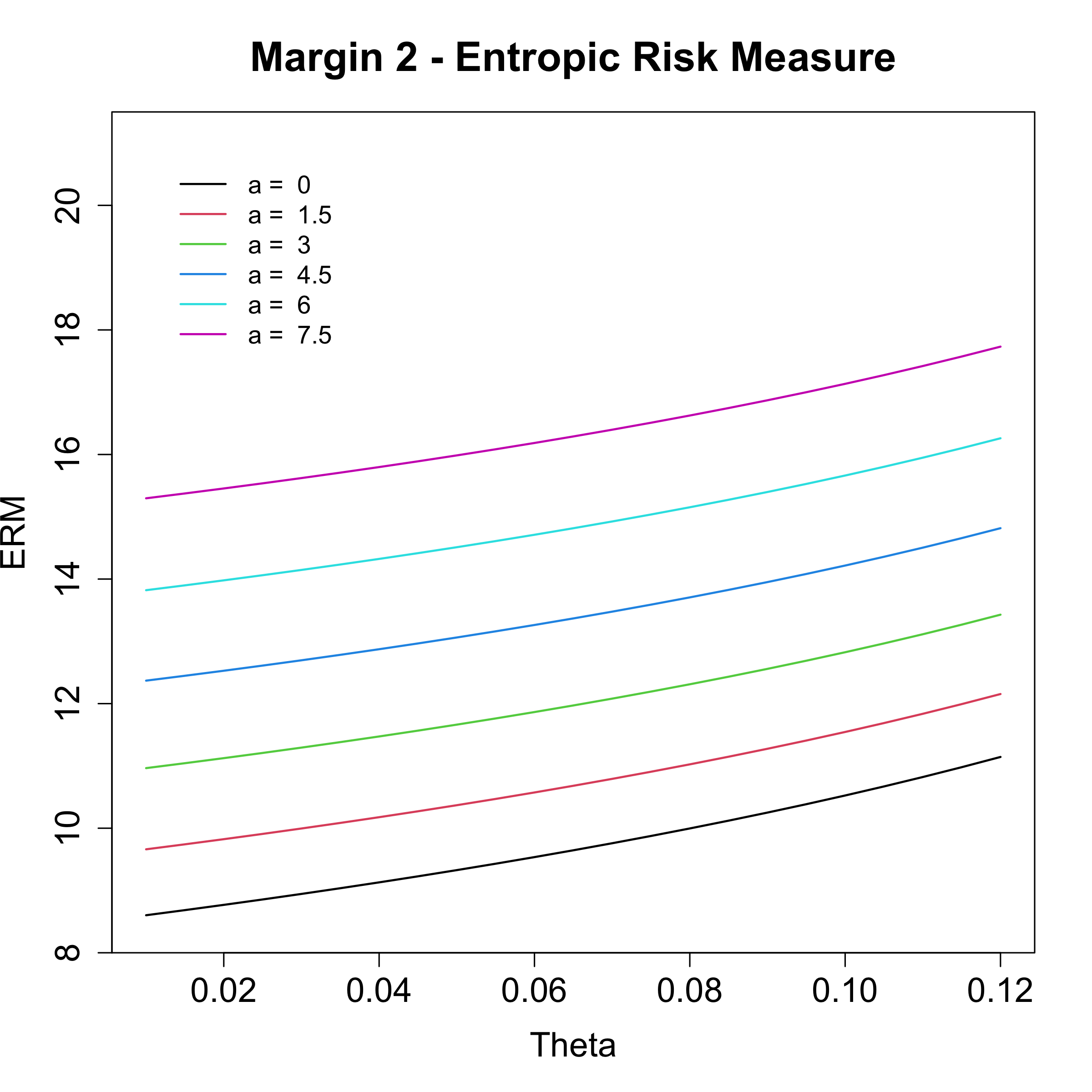}
\caption{Entropic risk measure for the marginal distributions of the CSPH model in Example~\ref{ex:risk_measure}, computed for varying values of $\vartheta$ and conditional on the common shock exceeding a threshold $a = 0, 1.5, 3, 4.5, 6, 7.5$.}
\label{fig:erm}
\end{figure}

\subsubsection{Common-Shock Conditional Value-at-Risk ($\mbox{CV@R}^{\text{CS}}$)}
Recall that the Value-at-Risk ($\mbox{V@R}$) at a confidence level $\alpha \in (0,1)$ for a loss $X_i$ is defined as its $\alpha$-quantile as
\[
\mbox{V@R}_{\alpha}(X_i) = \inf\{x: F_{X_i}(x) \ge \alpha\},
\]
where $F_{X_i}$ is the cumulative distribution function of $X_i$. Computing the $\mbox{V@R}$ typically involves inverting $F_{X_i}$. Since $X_i$ involves $\mbox{PH}$ distributed components (as per \eqref{eq:aux-Xi-PH}), its distribution function is known but generally does not admit a simple analytical inverse. Thus, $\mbox{V@R}_{\alpha}(X_i)$ is usually determined numerically. For example, Table~\ref{tab:var} reports the values of \(\mbox{V@R}_{\alpha}(X_1)\) and \(\mbox{V@R}_{\alpha}(X_2)\) for the CSPH model in Example~\ref{ex:risk_measure} at several confidence levels $\alpha$.

\begin{table}[htbp]
\centering
\caption{Value-at-Risk at different levels for the CSPH model in Example~\ref{ex:risk_measure}.}
\label{tab:var}
\begin{tabular}{lccc}
\toprule
& \multicolumn{3}{c}{Level $\alpha$}   \\ \cline{2-4}
  & 0.95     & 0.975     & 0.99    \\
\midrule
$\mbox{V@R}_{\alpha}(X_1)$ &  28.89 & 33.94 & 40.64 \\
$\mbox{V@R}_{\alpha}(X_2)$ &  19.14 & 22.31 & 26.40\\
\bottomrule
\end{tabular}
\end{table}

In the context of our common-shock framework, we introduce a specific conditional risk measure, termed the Common-Shock Conditional Value-at-Risk, denoted $\mbox{CV@R}^{\text{CS}}_{a}(X_i)$. This measure is defined as the expected value of an individual loss $X_i$ conditioned on the common-shock time $\tau_{1,2}$ exceeding a certain threshold $a \ge 0$ as
\[
\mbox{CV@R}^{\text{CS}}_{a}(X_i) = \mathbb{E}[X_i \mid \tau_{1,2} > a].
\]
This modification is relevant because it directly quantifies how the expectation of an individual loss $X_i$ is revised based purely on information about the persistence of the common-shock phase (i.e., $\tau_{1,2} > a$). Unlike standard Conditional Value-at-Risk definitions that further condition on $X_i$ exceeding its own tail threshold (often $\mbox{V@R}_{\alpha}(X_i)$), $\mbox{CV@R}^{\text{CS}}_{a}(X_i)$ isolates the impact of the common shock's progression. Such a measure is useful for dynamic risk assessment as more information about $\tau_{1,2}$ becomes available, for understanding conditional exposures in systems affected by a common triggering event, or for pricing contracts where payouts depend on the common shock occurring after time $a$. It focuses on the expected severity of $X_i$ under a specific scenario related to the common-shock driver.

The computation of $\mbox{CV@R}^{\text{CS}}_{a}(X_i)$ can be performed using the master formula as
\[
\mbox{CV@R}^{\text{CS}}_{a}(X_i) = \frac{\mathbb{E}[X_i \mathds{1}_{\{\tau_{1,2} > a\}}]}{\P(\tau_{1,2} > a)}.
\]
The denominator is $\P(\tau_{1,2} > a) = \mathcal{M}_{\bm{0},\bm{0},(a,0,0)}$. For $X_1$, the numerator is
\begin{align*}
\mathbb{E}[X_1 \mathds{1}_{\{\tau_{1,2} > a\}}] &= \mathbb{E}[(a_1 \tau_{1,2} + \tilde{\tau}_1) \mathds{1}_{\{\tau_{1,2} > a\}}] \\
&= a_1 \mathbb{E}[\tau_{1,2} \mathds{1}_{\{\tau_{1,2} > a\}}] + \mathbb{E}[\tilde{\tau}_1 \mathds{1}_{\{\tau_{1,2} > a\}}] \\
&= a_1 \mathcal{M}_{(1,0,0),\bm{0},(a,0,0)} + \mathcal{M}_{(0,1,0),\bm{0},(a,0,0)}.
\end{align*}
Similarly for $X_2$, it is
\[
\mathbb{E}[X_2 \mathds{1}_{\{\tau_{1,2} > a\}}] = \mathbb{E}[(a_2 \tau_{1,2} + \tilde{\tau}_2) \mathds{1}_{\{\tau_{1,2} > a\}}] = a_2 \mathcal{M}_{(1,0,0),\bm{0},(a,0,0)} + \mathcal{M}_{(0,0,1),\bm{0},(a,0,0)}.
\]
Thus,
\[
\mbox{CV@R}^{\text{CS}}_{a}(X_1) = \frac{a_1 \mathcal{M}_{(1,0,0),\bm{0},(a,0,0)} + \mathcal{M}_{(0,1,0),\bm{0},(a,0,0)}}{\mathcal{M}_{\bm{0},\bm{0},(a,0,0)}},
\]
and
\[
\mbox{CV@R}^{\text{CS}}_{a}(X_2) = \frac{a_2 \mathcal{M}_{(1,0,0),\bm{0},(a,0,0)} + \mathcal{M}_{(0,0,1),\bm{0},(a,0,0)}}{\mathcal{M}_{\bm{0},\bm{0},(a,0,0)}}.
\]


The behavior of $\mbox{CV@R}^{\text{CS}}_{a}(X_1)$ and $\mbox{CV@R}^{\text{CS}}_{a}(X_2)$ for the model in Example~\ref{ex:risk_measure} is illustrated in the left and right panels of Figure~\ref{fig:cvar}, respectively. As the threshold $a$ increases, we observe a corresponding increase in both risk measures, reflecting the impact on the expected losses when realizations of the common shock are observed.
\begin{figure}[!htbp]
\centering
\includegraphics[width=0.49\textwidth]{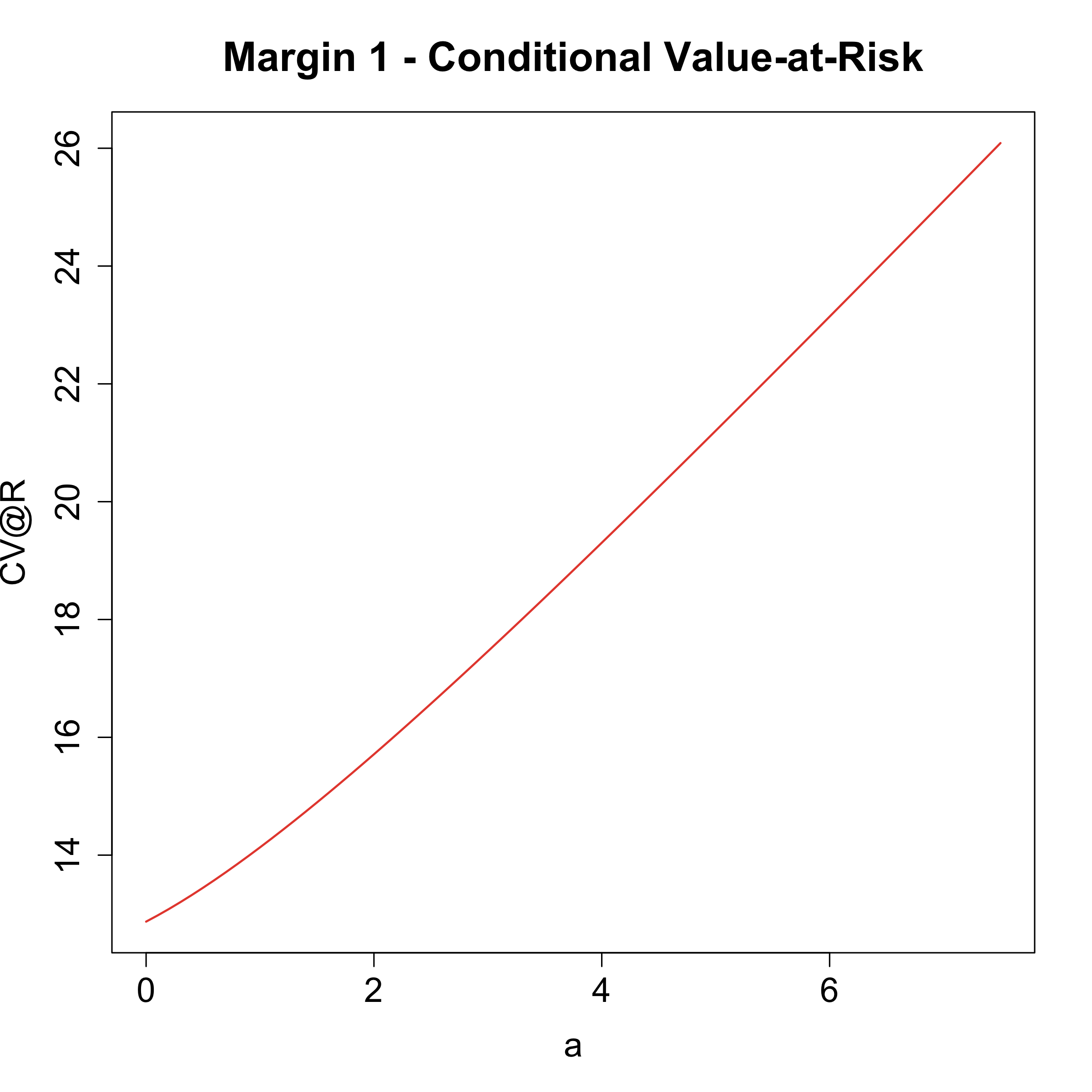}
\includegraphics[width=0.49\textwidth]{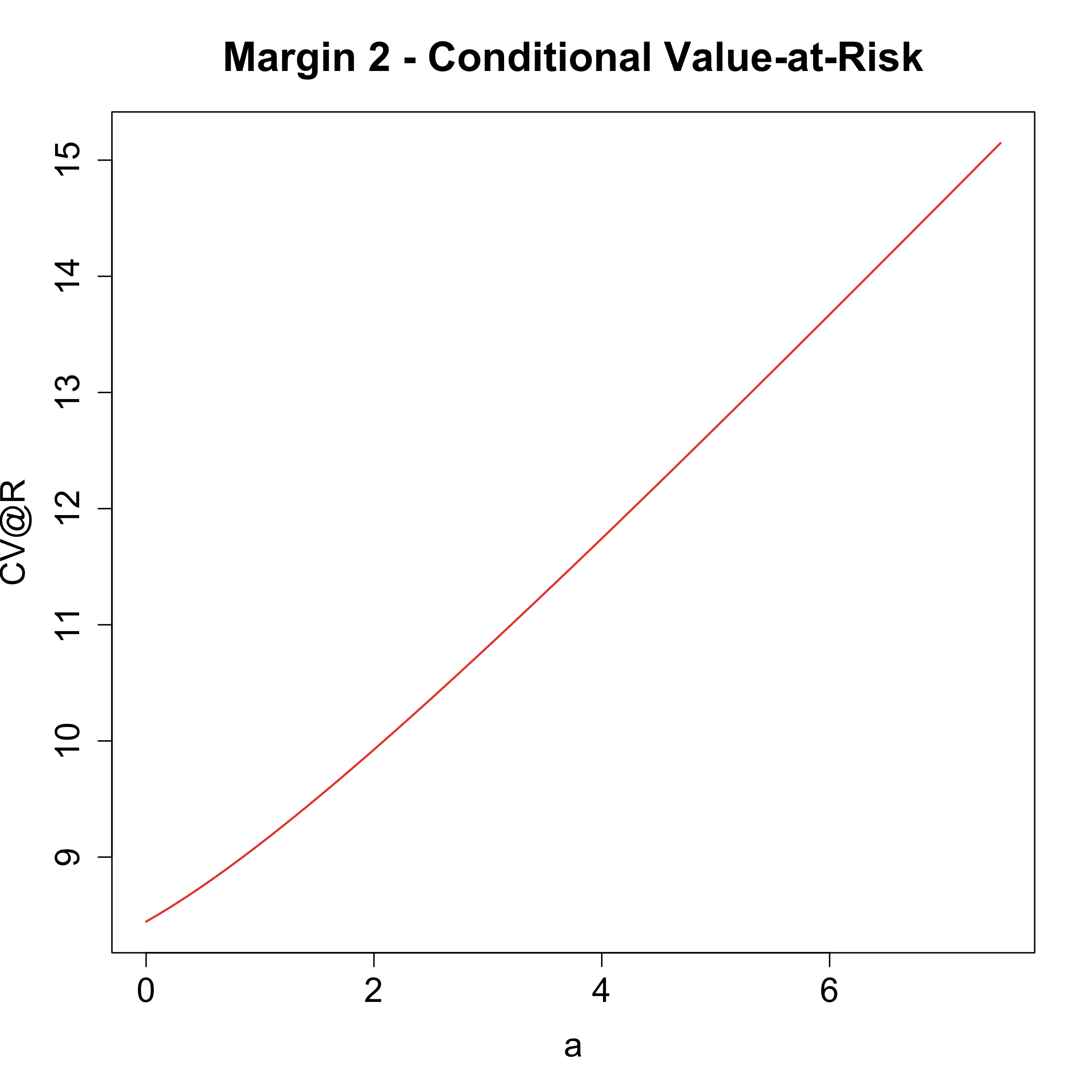}
\caption{CV@R for the marginal distributions of the CSPH model in Example~\ref{ex:risk_measure} plotted as a function of the threshold parameter $a$ affecting the common-shock component $\tau_{1,2}$.}
\label{fig:cvar}
\end{figure}

\subsubsection{Common-Shock Multivariate Tail Conditional Expectation ($\mbox{MTCE}^{\text{CS}}$)}
The multivariate tail conditional expectation (MTCE) is a risk measure that extends the concept of tail conditional expectation to a multivariate setting, often used to understand the behavior of multiple risks when they are jointly in an extreme state (see, e.g., \cite{Cousin2014MultiCTE, landsman2016multivariate, zhu2012asymptotic}). Building on the common-shock conditioning approach used for $\mbox{CV@R}^{\text{CS}}$, we define the Common-Shock Multivariate Tail Conditional Expectation, denoted $\mbox{MTCE}^{\text{CS}}_{a}(X_1, X_2)$. This measures the expected product of $X_1$ and $X_2$ given that the common-shock time $\tau_{1,2}$ exceeds a threshold $a \ge 0$ as
\[
\mbox{MTCE}^{\text{CS}}_{a}(X_1, X_2) = \mathbb{E}[X_1 X_2 \mid \tau_{1,2} > a].
\]
This variant allows for the assessment of the conditional joint impact of $X_1$ and $X_2$ based solely on the persistence of the pre-shock phase. The calculation proceeds as
\[
\mbox{MTCE}^{\text{CS}}_{a}(X_1, X_2) = \frac{\mathbb{E}[X_1 X_2 \cdot \mathds{1}_{\{\tau_{1,2}>a\}}]}{\P(\tau_{1,2} > a)}.
\]
The denominator is $\P(\tau_{1,2} > a) = \mathcal{M}_{\bm{0},\bm{0},(a,0,0)}$. For the numerator, expanding the product gives
\begin{align*}
\mathbb{E}[X_1 X_2 \cdot \mathds{1}_{\{\tau_{1,2}>a\}}] &= \mathbb{E}[(a_1 \tau_{1,2} + \tilde{\tau}_1)(a_2 \tau_{1,2} + \tilde{\tau}_2)\mathds{1}_{\{\tau_{1,2}>a\}}] \\
&= \mathbb{E}[(a_1 a_2 \tau_{1,2}^2 + a_1 \tau_{1,2}\tilde{\tau}_2 + a_2 \tilde{\tau}_1\tau_{1,2} + \tilde{\tau}_1\tilde{\tau}_2)\mathds{1}_{\{\tau_{1,2}>a\}}].
\end{align*}
This can be expressed with the master formula by setting $y_1=0, y_2=0$ in the conditioning vector $\bm{y}=(a,0,0)$, which yields
\begin{align*}
\mathbb{E}[X_1 X_2 \cdot \mathds{1}_{\{\tau_{1,2}>a\}}] &= a_1 a_2 \mathcal{M}_{(2,0,0),\bm{0},(a,0,0)} + a_1 \mathcal{M}_{(1,0,1),\bm{0},(a,0,0)} \\
&\quad + a_2 \mathcal{M}_{(1,1,0),\bm{0},(a,0,0)} + \mathcal{M}_{(0,1,1),\bm{0},(a,0,0)}.
\end{align*}

\subsubsection{Common-Shock Multivariate Tail Covariance ($\mbox{MTCov}^{\text{CS}}$)}
The multivariate tail covariance ($\mbox{MTCov}$) provides a measure of dependence in the tail of a multivariate distribution, indicating how risks co-vary under extreme conditions \citep{landsman2018multivariate}. Analogous to the common-shock conditioning for other measures, we define the Common-Shock Multivariate Tail Covariance, $\mbox{MTCov}^{\text{CS}}_{a}(X_1, X_2)$. This quantifies the covariance between $X_1$ and $X_2$ conditioned solely on the common-shock time $\tau_{1,2}$ exceeding a threshold $a \ge 0$ as
\[
\mbox{MTCov}^{\text{CS}}_{a}(X_1, X_2) = \mbox{Cov}(X_1, X_2 \mid \tau_{1,2} > a).
\]
This simplifies to
\[
\mbox{MTCov}^{\text{CS}}_{a}(X_1, X_2) = \mathbb{E}[X_1 X_2 \mid \tau_{1,2}>a] - \mathbb{E}[X_1\mid \tau_{1,2}>a]\mathbb{E}[X_2\mid \tau_{1,2}>a].
\]
$\mbox{MTCov}^{\text{CS}}_{a}(X_1, X_2)$ is thus useful for understanding how the linear dependence between $X_1$ and $X_2$ changes based on the information that the common shock has been delayed past time $a$.
The term $\mathbb{E}[X_1 X_2 \mid \tau_{1,2}>a]$ is the $\mbox{MTCE}^{\text{CS}}_{a}(X_1, X_2)$ previously derived, while the conditional expectations $\mathbb{E}[X_i\mid \tau_{1,2}>a]$ are precisely the $\mbox{CV@R}^{\text{CS}}_a(X_i)$ terms introduced above.
These components can then be combined to compute the common-shock multivariate tail covariance.

Figure~\ref{fig:cstc} displays the behavior of $\mbox{MTCE}^{\text{CS}}_a$ (left panel) and $\mbox{MTCov}^{\text{CS}}_a$ (right panel) for the CSPH model introduced in Example~\ref{ex:risk_measure}, evaluated over a range of values for the threshold $a$. As $a$ increases, we observe that $\mbox{MTCE}^{\text{CS}}_a$ increases, reflecting a higher conditional cross expectation for larger realizations of the common shock. In contrast, $\mbox{MTCov}^{\text{CS}}_a$ decreases with $a$, indicating a reduction in conditional covariance as the common shock becomes more dominant, showing that idiosyncratic variation plays a smaller role in the dependence structure.

\begin{figure}[!htbp]
\centering
\includegraphics[width=0.49\textwidth]{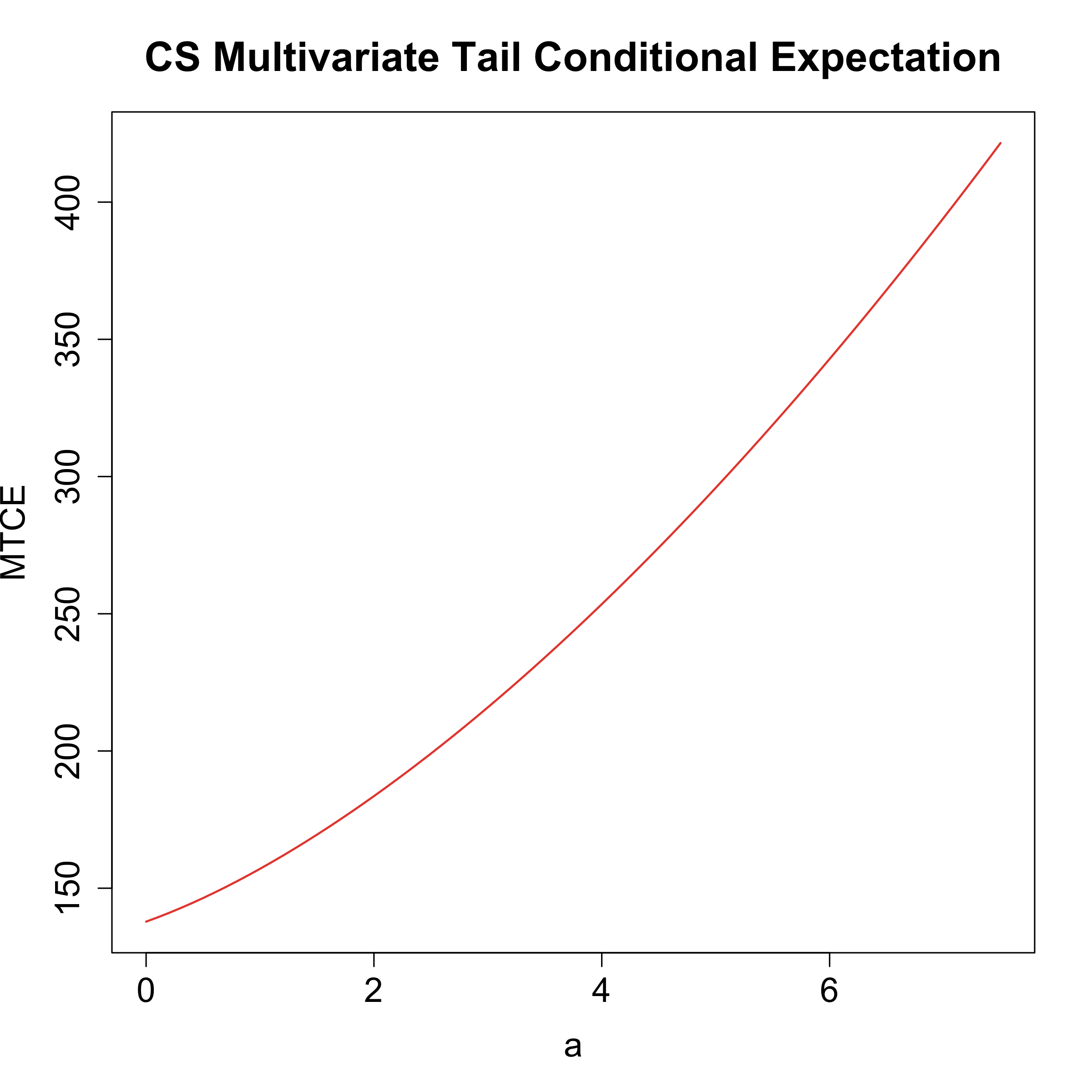}
\includegraphics[width=0.49\textwidth]{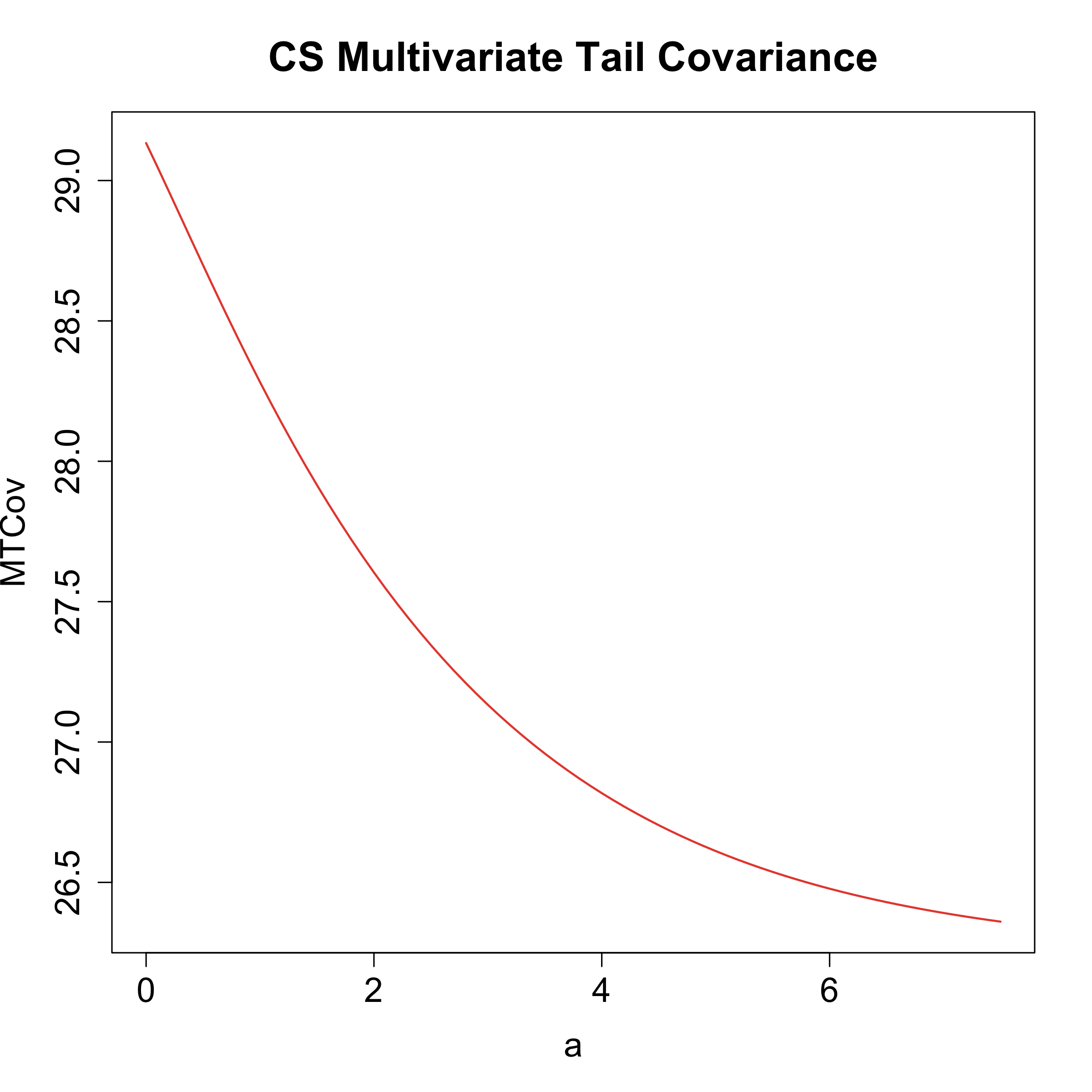}
\caption{$\mbox{MTCE}^{\text{CS}}_a$ (left) and $\mbox{MTCov}^{\text{CS}}_a$ (right) for the CSPH model in Example~\ref{ex:risk_measure} plotted for different values of the threshold $a$.}
\label{fig:cstc}
\end{figure}

\section{Conditional Dependence Measures}
\label{sec:conditional_dependence}

While the $\mbox{CSPH}$ framework models dependence through the common-shock mechanism and the shared state at the shock's occurrence, it is also insightful to analyze the dependence structure that persists between the components $X_1$ and $X_2$ after accounting for the specific realization of the common-shock time $\tau_{1,2}$. By conditioning on $\tau_{1,2}=t \ge 0$, the random variables become $X_1(t) = a_1 t + \tilde{\tau}_1$ and $X_2(t) = a_2 t + \tilde{\tau}_2$. Any remaining dependence between $X_1(t)$ and $X_2(t)$ for a fixed $t \ge 0$ is solely determined by the dependence between the residual lifetimes $\tilde{\tau}_1$ and $\tilde{\tau}_2$. This section explores this conditional dependence structure.

Given that the common shock occurs at time $\tau_{1,2}=t \ge 0$ and the system transitions into state $K=k \in \mathcal{S}$, the residual lifetimes $\tilde{\tau}_1$ and $\tilde{\tau}_2$ are conditionally independent, following $\mbox{PH}$ distributions $\mbox{PH}(\bm{e}_k^\top, \bm{Q}_1)$ and $\mbox{PH}(\bm{e}_k^\top, \bm{Q}_2)$, respectively. The probability of transitioning into state $k \in \mathcal{S}$ at time $t \ge 0$, given $\tau_{1,2}=t$, is
\[
\pi_k(t) := \P(K=k \mid \tau_{1,2}=t) = \frac{\bm{\alpha}e^{\bm{T}t}\bm{u}^{(k)}}{\bm{\alpha}e^{\bm{T}t}\bm{U}\bm{1}}.
\]

Thus, the joint probability density function of $(\tilde{\tau}_1, \tilde{\tau}_2)$ given $\tau_{1,2}=t \ge 0$ is a mixture given by
\[
f_{\tilde{\tau}_1, \tilde{\tau}_2 \mid \tau_{1,2}=t}(y_1, y_2) = \sum_{k \in \mathcal{S}} \pi_k(t) f_{\tilde{\tau}_1 \mid K=k}(y_1) f_{\tilde{\tau}_2 \mid K=k}(y_2),
\]
where $f_{\tilde{\tau}_1 \mid K=k}(y_1) = \bm{e}_k^\top e^{\bm{Q}_1 y_1} \bm{q}_1$ and $f_{\tilde{\tau}_2 \mid K=k}(y_2) = \bm{e}_k^\top e^{\bm{Q}_2 y_2} \bm{q}_2$ are the densities of $\mbox{PH}(\bm{e}_k^\top, \bm{Q}_1)$ and $\mbox{PH}(\bm{e}_k^\top, \bm{Q}_2)$, respectively. This corresponds to a bivariate density in the \(\mbox{mPH}\) class introduced in \cite{bladt2023tractable}. Moreover, since this density is of the form
\[f(x_1, x_2) = \sum_{k \in \mathcal{S}} p_k f_{k_1}(x_1)f_{k_2}(x_2),\]
with $p_k = \pi_k(t)$, and $f_{k1}(y_1) = f_{\tilde{\tau}_1 \mid K=k}(y_1)$ and $f_{k2}(y_2) = f_{\tilde{\tau}_2 \mid K=k}(y_2)$  being matrix-exponential densities (PH distributions are a subclass of the matrix-exponential family; see \citealp{bladt2017matrix}), it also belongs to the broader class of multivariate matrix-exponential affine mixtures (\(\mbox{MMEam}\)) introduced by \cite{cheung2022multivariate}. The framework of \cite{cheung2022multivariate} can therefore be directly applied.
This enables the derivation of expressions for various dependence measures for $(\tilde{\tau}_1, \tilde{\tau}_2)$ conditional on the shock time $\tau_{1,2}=t$. These measures, which depend on $t$ through $\pi_k(t)$, characterize the residual dependence structure. Note that for linear and rank correlation measures, the dependence between $(X_1(t), X_2(t))$ is the same as that between $(\tilde{\tau}_1, \tilde{\tau}_2)$ given $\tau_{1,2}=t$.

Table \ref{tab:conditional_dependence_measures} summarizes formulas for selected dependence measures, adapted from Theorems 3.6, 3.7, and 3.8 from \cite{cheung2022multivariate}. The table is not exhaustive, and most dependence measures present in \cite{cheung2022multivariate} can be adapted to study the behavior of $(\tilde{\tau}_1, \tilde{\tau}_2)$ given $\tau_{1,2}=t \ge 0$. These conditional dependence measures, which are functions of $t \ge 0$ through the mixing probabilities $\pi_k(t)$, provide a nuanced understanding of how the relationship between the residual components $\tilde{\tau}_1$ and $\tilde{\tau}_2$ evolves depending on the duration of the pre-shock phase.

\begin{table}[htbp]
\centering
\caption{Conditional Dependence Measures for $(\tilde{\tau}_1, \tilde{\tau}_2)$ given $\tau_{1,2}=t$.}
\label{tab:conditional_dependence_measures}
\begin{tabular}{ll}
\toprule
Measure & Formula \\
\midrule
$\mathbb{E}[\tilde{\tau}_i \mid \tau_{1,2}=t]$ & $\sum_{k \in \mathcal{S}} \pi_k(t) \mathbb{E}_k[\tilde{\tau}_i]$ \\
& where $\mathbb{E}_k[\tilde{\tau}_i] = \bm{e}_k^\top (-\bm{Q}_i)^{-2}\bm{q}_i$ \\[1ex]
$\text{Var}(\tilde{\tau}_i \mid \tau_{1,2}=t)$ & $\left( \sum_{k \in \mathcal{S}} \pi_k(t) \mathbb{E}_k[\tilde{\tau}_i^2] \right) - \left( \mathbb{E}[\tilde{\tau}_i \mid \tau_{1,2}=t] \right)^2$ \\
& where $\mathbb{E}_k[\tilde{\tau}_i^2] = 2\bm{e}_k^\top (-\bm{Q}_i)^{-3}\bm{q}_i$ \\[1ex]
$\mathbb{E}[\tilde{\tau}_1 \tilde{\tau}_2 \mid \tau_{1,2}=t]$ & $\sum_{k \in \mathcal{S}} \pi_k(t) \mathbb{E}_k[\tilde{\tau}_1] \mathbb{E}_k[\tilde{\tau}_2]$ \\[1ex]
Pearson's $\rho(\tilde{\tau}_1, \tilde{\tau}_2 \mid \tau_{1,2}=t)$ & $\frac{\mathbb{E}[\tilde{\tau}_1 \tilde{\tau}_2 \mid \tau_{1,2}=t] - \mathbb{E}[\tilde{\tau}_1 \mid \tau_{1,2}=t]\mathbb{E}[\tilde{\tau}_2 \mid \tau_{1,2}=t]}{\sqrt{\text{Var}(\tilde{\tau}_1 \mid \tau_{1,2}=t)\text{Var}(\tilde{\tau}_2 \mid \tau_{1,2}=t)}}$ \\[1ex]
Kendall's $\tau(\tilde{\tau}_1, \tilde{\tau}_2 \mid \tau_{1,2}=t)$ & $4 \sum_{k \in \mathcal{S}} \sum_{m \in \mathcal{S}} \pi_k(t) \pi_m(t) c_{km}^{(1)} c_{km}^{(2)} - 1$ \\
& where $c_{km}^{(i)} = \int_0^\infty F_{\tilde{\tau}_i \mid K=k}(y) f_{\tilde{\tau}_i \mid K=m}(y) \dd y$ \\
& $c_{km}^{(i)} = 1 - (\bm{e}_k^\top \otimes \bm{e}_m^\top) (-(\bm{Q}_i \oplus \bm{Q}_i))^{-1} ( \bm{1} \otimes \bm{q}_i )$  \\[1ex]
Spearman's $\rho_S(\tilde{\tau}_1, \tilde{\tau}_2 \mid \tau_{1,2}=t)$ & $12 \sum_{k \in \mathcal{S}} \sum_{m \in \mathcal{S}} \sum_{n \in \mathcal{S}} \pi_k(t) \pi_m(t) \pi_n(t) c_{km}^{(1)} c_{kn}^{(2)} - 3$  \\
\bottomrule
\end{tabular}
\end{table}

Figure~\ref{fig:conmean} shows the conditional expectations $\mathbb{E}[\tilde{\tau}_i \mid \tau_{1,2}=t]$ for $i = 1, 2$, under the CSPH model specified in Example~\ref{ex:risk_measure}, plotted as a function of $t$. For the first margin (left), we observe a decreasing trend, indicating that the conditional mean of the idiosyncratic component $\tilde{\tau}_1$ becomes smaller as the duration of the common shock increases. In contrast, the curve for the second margin (right) remains flat. This behavior arises from the structure of the $\mathbf{Q}_2$ matrix, as the evolution of $\tilde{\tau}_2$ is independent of the state in which the common-shock component terminates.

\begin{figure}[!htbp]
\centering
\includegraphics[width=0.49\textwidth]{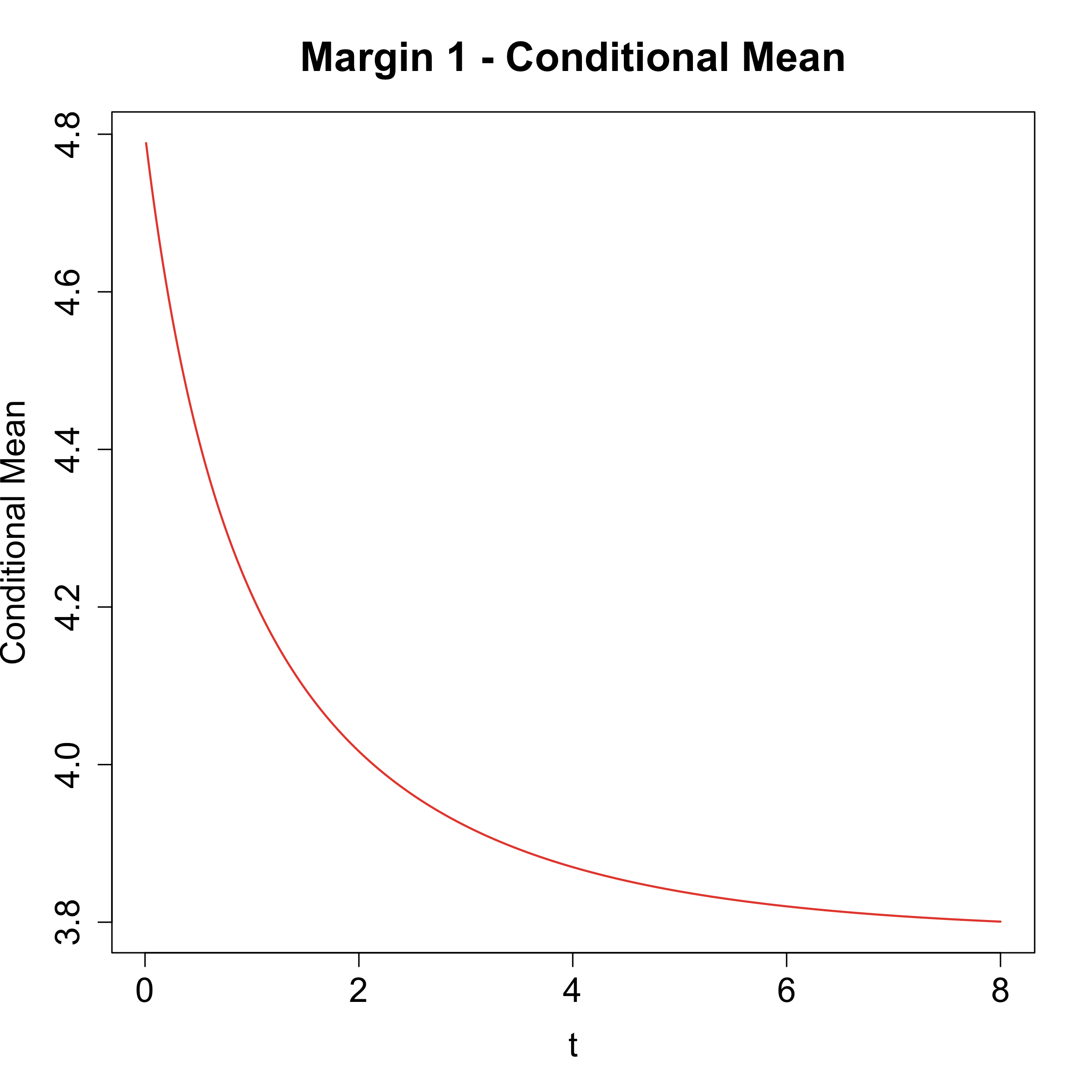}
\includegraphics[width=0.49\textwidth]{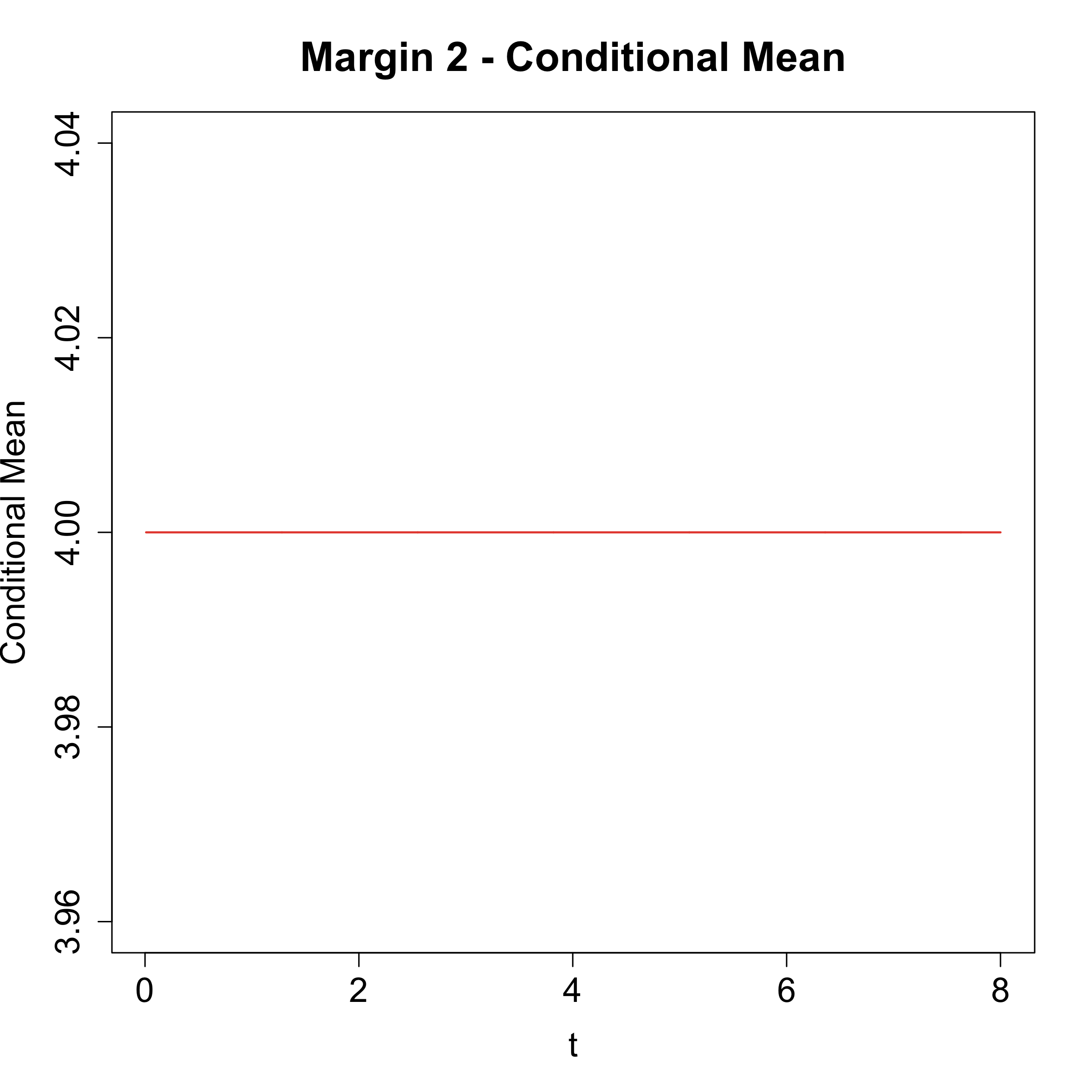}
\caption{Conditional means, $\mathbb{E}[\tilde{\tau}_i \mid \tau_{1,2}=t]$, $i =1, 2$, for the CSPH model in Example~\ref{ex:risk_measure} plotted as a function of $t$.}
\label{fig:conmean}
\end{figure}

\section{Statistical Inference and Numerical Examples}\label{sec:inference_examples} 

Parameter estimation for the $\mbox{CSPH}$ model is carried out via maximum likelihood estimation. While the Expectation-Maximization (EM) algorithm is a standard tool for latent variable models, such as $\mbox{PH}$ distributions \citep[cf.][]{asmussen1996fitting}, its application to the $\mbox{CSPH}$ framework, although feasible, is cumbersome and slow. The formulation of the ``complete data" and the subsequent E-step, which involves computing the expectation of the complete-data loglikelihood given the observed data, requires computing conditional expectations of the unobserved path statistics related to the coupled pre-shock phase and the separate post-shock phases. This added complexity makes the implementation of the algorithm substantially more involved than in standard $\mbox{PH}$ models.

We therefore employ direct numerical maximization of the loglikelihood function using gradient-based methods. Specifically, we use the L-BFGS optimization routine as implemented in R throughout the \texttt{optim()} function. To enhance performance on large datasets, we parallelize the likelihood evaluation using the \texttt{parallel} R package.
Finally, to reduce the number of parameters and improve numerical stability, we adopt an equivalent representation of the model in which only the first margin is scaled:
\begin{align*}
X_1 = \beta ( \tau_{1,2} + \tilde{\tau}_1 ), \quad
X_2 = \tau_{1,2} + \tilde{\tau}_2,
\end{align*}
with $\beta >0$.
This is equivalent to the original $\mbox{CSPH}$ specification since the class of $\mbox{PH}$ distributions is closed under positive scalar multiplication, which corresponds to an inverse scaling of the associated subintensity matrices. This reparameterization reduces the parameter space without sacrificing model flexibility.
{
The R code used for all numerical examples presented in this paper is publicly available in the Zenodo repository \url{https://doi.org/10.5281/zenodo.17624706}.
}



\subsection{Synthetic Data}
We consider a simulated example where the data comes from a $\mbox{CSPH}$ model. The dataset comprises 2,000 iid bivariate observations drawn from the model specified in Example~\ref{ex:risk_measure}.
To illustrate the effectiveness of the proposed estimation method in recovering this structure, we fit a $\mbox{CSPH}$ model with the same dimensions, namely $|\mathcal{E}| = 3$ and $|\mathcal{S}| = 2$. The resulting estimated parameters are as follows:
\begin{gather*}
	\hat{\bfalp}=\left(
	1, \,0,\, 0 \right)\,, \quad
	\hat{\bfT}=\left( \begin{array}{cccc}
	-1.0223  & 0.4276 & 0.5147 \\
	13.7666 & -16.4783  & 1.2234  \\
	 0.0565  & 0.0765 & -0.4225
	\end{array} \right) \,, 	\quad
	\hat{\bfU}=\left( \begin{array}{cc}
	0.0005 & 0.0795  \\
	0.1797 &  1.3086   \\
	0.2548 & 0.0347
	\end{array} \right) \,, \\
	\hat{\bfQ}_1=\left( \begin{array}{cc}
	-0.5613 & 0.0006  \\
	0.7939 & -0.7948
	\end{array} \right) \,, \quad
	\hat{\bfQ}_2=\left( \begin{array}{cc}
	-11.8073 & 11.5334  \\
	 9.9585 & -10.1935
	\end{array} \right) \,, \quad
	\hat{\beta}= 1.9344 \,.
\end{gather*}

The fit quality is illustrated in Figure~\ref{fig:sim_joint}, which combines a scatter plot of the simulated data, contour curves of the fitted $\mbox{CSPH}$ model, and histograms of the marginal distributions. The contour lines closely follow the structure of the data cloud, indicating that the model captures the joint distribution of the data. Additionally, the histograms in the top and right panels show that the fitted model accurately reproduces the marginal behavior of each component. In both cases, the fitted marginal densities closely align with the original specification. Moreover, Figure~\ref{fig:sim_com} includes a histogram of the simulated common shock, where we observe that the fitted model successfully recovers this structure and aligns closely with the original specification.

The agreement between the empirical and fitted moments further supports the model's adequacy. For the estimated $\mbox{CSPH}$ model, $\mathbb{E}[X_1] = 13.19$, $\mathbb{E}[X_2] = 8.57$, $\mathbb{E}[\tau_{1,2}] = 4.62$, and a correlation of $\rho(X_1, X_2) = 0.6423$. These are close to the empirical values of $13.19$, $8.57$, $4.61$, and $0.6477$, respectively. The loglikelihood of the fitted model is $-12,183.50$, compared to $-12,185.40$ for the true (data-generating) parameters, which also indicates a high-quality fit.

\begin{figure}[!htbp]
\centering
\includegraphics[width=\textwidth]{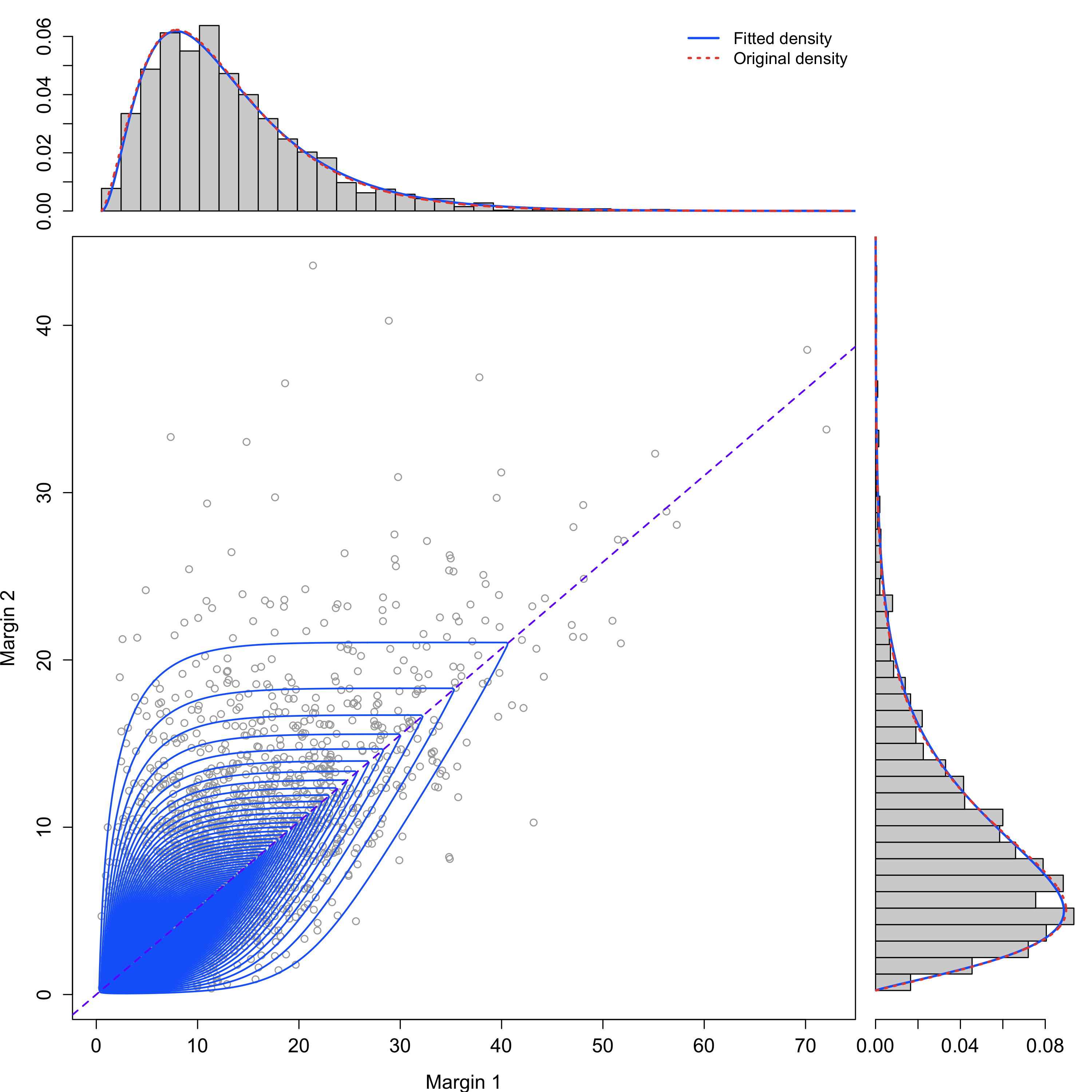}
\caption{Fit quality assessment for the synthetic data example. Center: Scatter plot of 2,000 simulated bivariate observations from a CSPH model with $|\mathcal{E}| = 3$ transient states and $|\mathcal{S}| = 2$ post-shock states, overlaid with contour curves of the fitted CSPH joint density. Top and right: Histograms of the simulated margins along with the corresponding marginal densities of the fitted CSPH model. The close alignment between fitted and true distributions indicates successful parameter recovery.}
\label{fig:sim_joint}
\end{figure}

\begin{figure}[!htbp]
\centering
\includegraphics[width=0.5\textwidth]{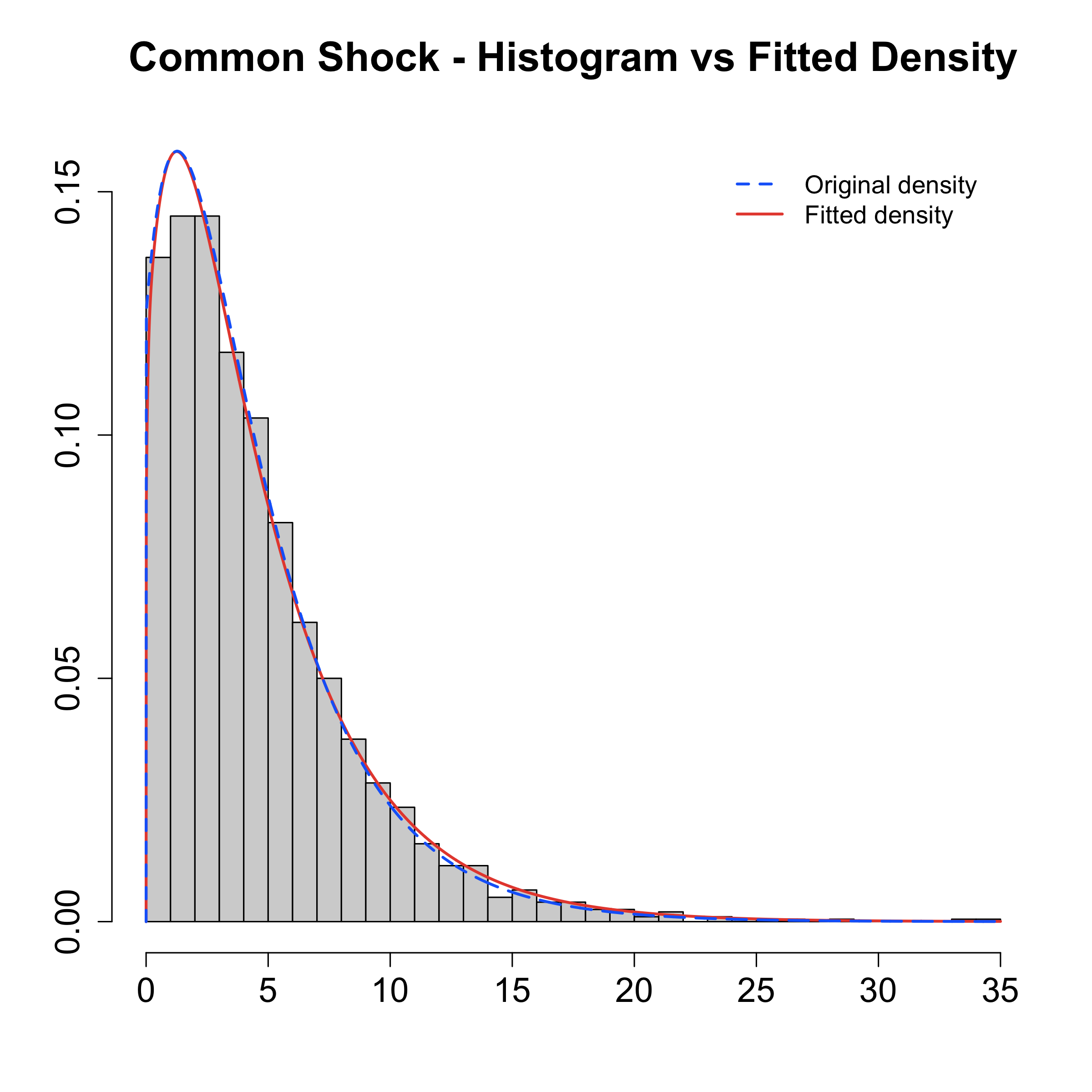}
\caption{Common-shock component recovery assessment for the synthetic data example. Histogram of the simulated common-shock component $\tau_{1,2}$ along with its corresponding fitted density from the estimated CSPH model. The fitted density successfully captures the distribution of the latent common-shock, demonstrating the model's ability to identify this unobserved component from bivariate data.}
\label{fig:sim_com}
\end{figure}

\subsection{Danish Fire Insurance Data}

We now consider the well-known Danish fire insurance dataset. We focus on the logarithm of the claim sizes for the building and content components, restricted to the domain $(1, \infty) \times (1, \infty)$. A $\mbox{CSPH}$ model with $|\mathcal{E}| = 3$ and $|\mathcal{S}| = 2$ is fitted to the transformed data, obtaining the following estimated parameters:
\begin{gather*}
	\hat{\bfalp}=\left(
	0.0006, \, 0.3728, \, 0.6266 \right)\,, \\
	\hat{\bfT}=\left( \begin{array}{cccc}
	-1.9164 &  0.0006   &   0.0069 \\
	 1.8615 & -1.8626   &   0.0010  \\
	 10.4880 & 168.3337 & -16088.4190
	\end{array} \right) \,, 	\quad
	\hat{\bfU}=\left( \begin{array}{cc}
	0.0009   &  1.9081  \\
	0.0002   & 0.0000  \\
	1532.0365 & 14377.5609
	\end{array} \right) \,, \\
	\hat{\bfQ}_1=\left( \begin{array}{cc}
	-1.1644 & 0.0002  \\
	0.8706 & -1.1738
	\end{array} \right) \,, \quad
	\hat{\bfQ}_2=\left( \begin{array}{cc}
	-2.0825 & 0.0004  \\
	 1.3176 & -2.1302
	\end{array} \right) \,, \\
	\hat{\beta}= 0.5763 \,.
\end{gather*}

The overall quality of the fit is illustrated in Figure~\ref{fig:danish_joint}. We observe that the contour curves of the fitted $\mbox{CSPH}$ model closely align with the scatter of the observed data, indicating that the joint distribution is well captured. The histograms of the log-transformed building and content claim sizes are well approximated by the corresponding fitted marginal densities, suggesting an adequate modeling of the margins.
\begin{figure}[!htbp]
\centering
\includegraphics[width=\textwidth]{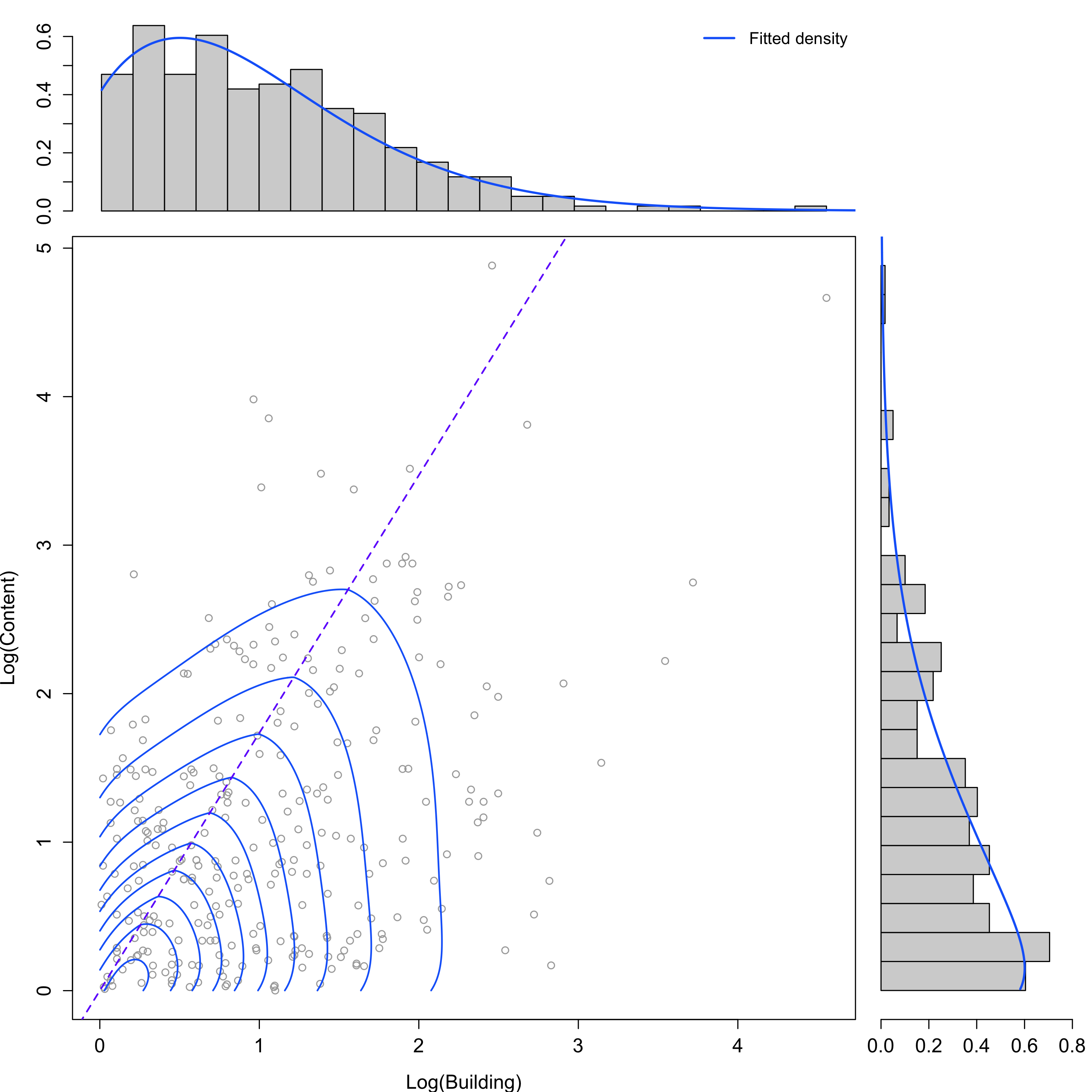}
\caption{ Fit quality assessment for the Danish fire insurance dataset. Center: Scatter plot of log-transformed building and content claim sizes overlaid with contour curves of the fitted CSPH joint density (CSPH model with $|\mathcal{E}| = 3$ and $|\mathcal{S}| = 2$). Top and right: Histograms of the logarithmic claim sizes for building (top) and content (right) along with the corresponding marginal densities of the fitted model. The contour curves closely follow the data cloud structure, and the marginal fits accurately capture the empirical distributions, indicating good overall fit quality.}
\label{fig:danish_joint}
\end{figure}

Figure~\ref{fig:danish_com} provides insights into the common-shock component of the fitted distribution. The fitted distribution function (right panel) suggests a distribution with a non-negligible atom at zero. This behavior reflects a structural feature in the data whereby not all events are accompanied by a common shock. Nevertheless, the common shock cannot be considered negligible. For instance, its estimated mean is approximately $0.40$, compared to $1.07$ and $1.15$ for the building and content components, respectively. This highlights the substantial role of the common shock in the overall risk structure and its significant contribution to the dependence observed between the two margins.
\begin{figure}[!htbp]
\centering
\includegraphics[width=0.49\textwidth]{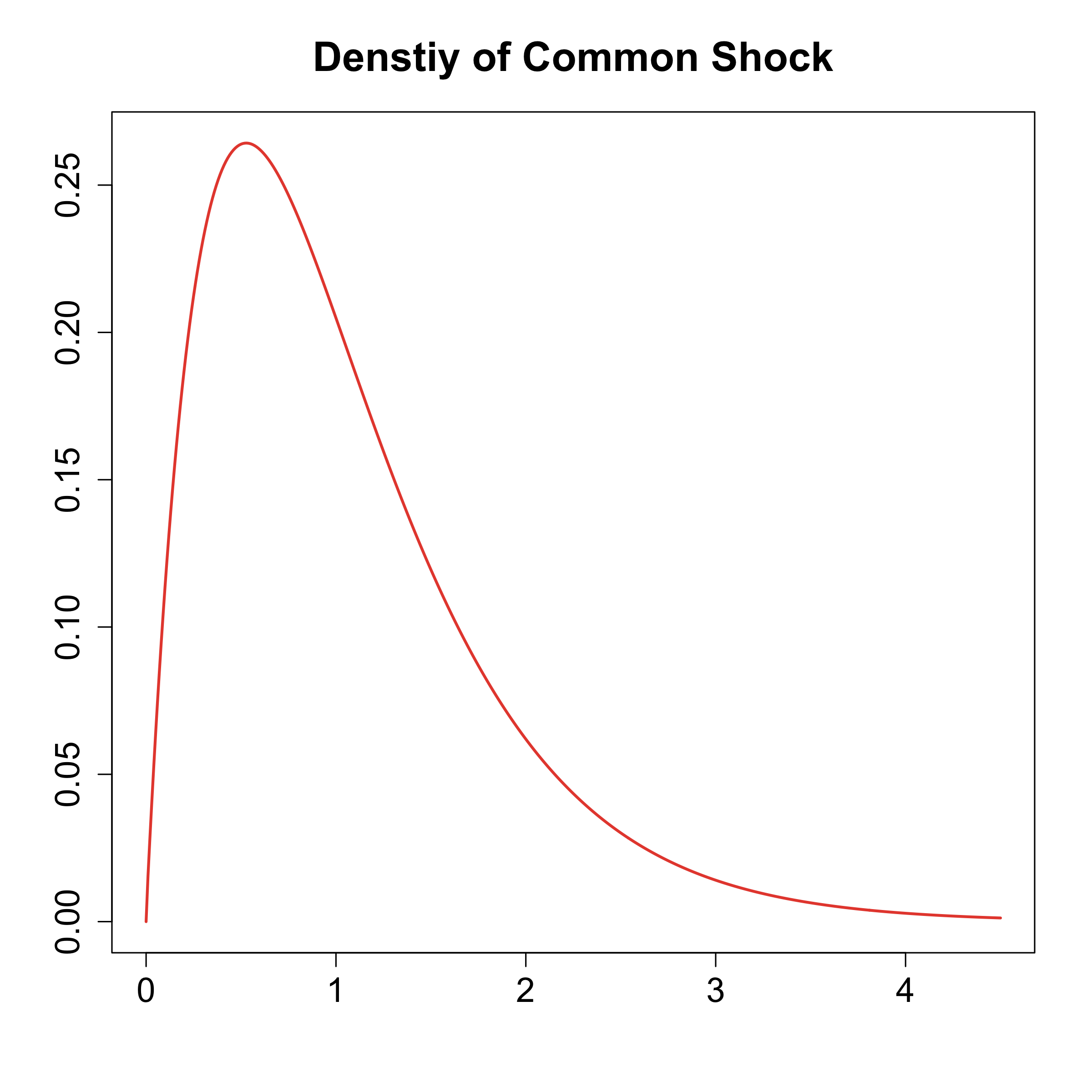}
\includegraphics[width=0.49\textwidth]{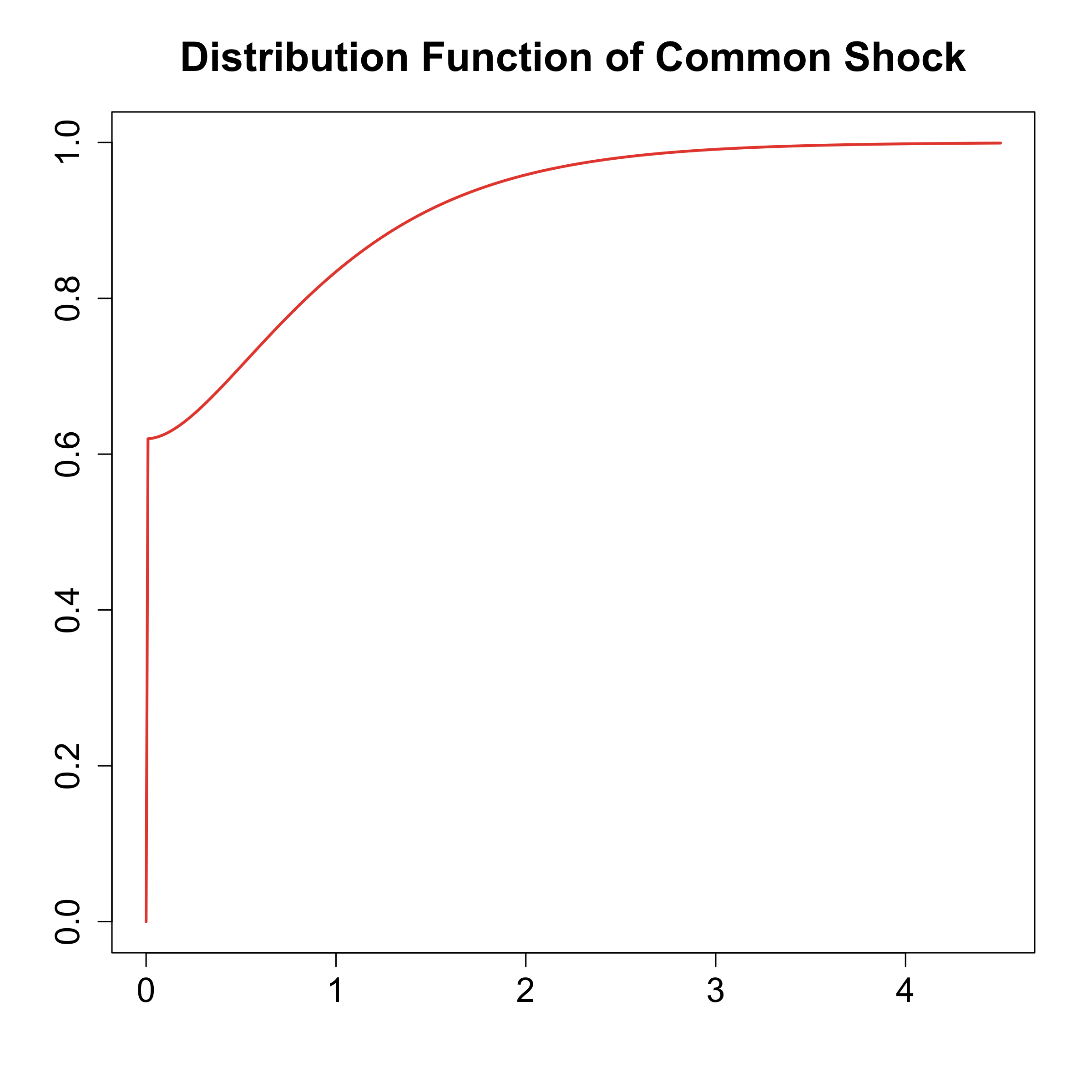}
\caption{ Common-shock component for the Danish fire insurance fitted model. Left: Density function of the estimated common-shock component $\tau_{1,2}$, showing a distribution with substantial probability mass near zero. Right: Distribution function revealing a non-negligible atom at zero, indicating that not all claims involve a common shock. Despite this, the common shock remains an important component with estimated mean 0.40, compared to marginal means of 1.07 (building) and 1.15 (content), highlighting its role in driving dependence.}
\label{fig:danish_com}
\end{figure}

With the fitted model at hand, we proceed to the computation of several risk measures. We begin by examining the $\mathrm{CV@R}$ in Figure~\ref{fig:cvar_danish}, where we observe the effect of the atom at zero in the distribution of the common shock, manifested as sharp jumps immediately after $a = 0$. A similar pattern is also present in the $\mbox{MTCE}^{\text{CS}}$ and $\mbox{MTCov}^{\text{CS}}$ measures shown in Figure~\ref{fig:dan_measure}, highlighting the structural role of the common shock in driving dependence within the model.

\begin{figure}[!htbp]
\centering
\includegraphics[width=0.49\textwidth]{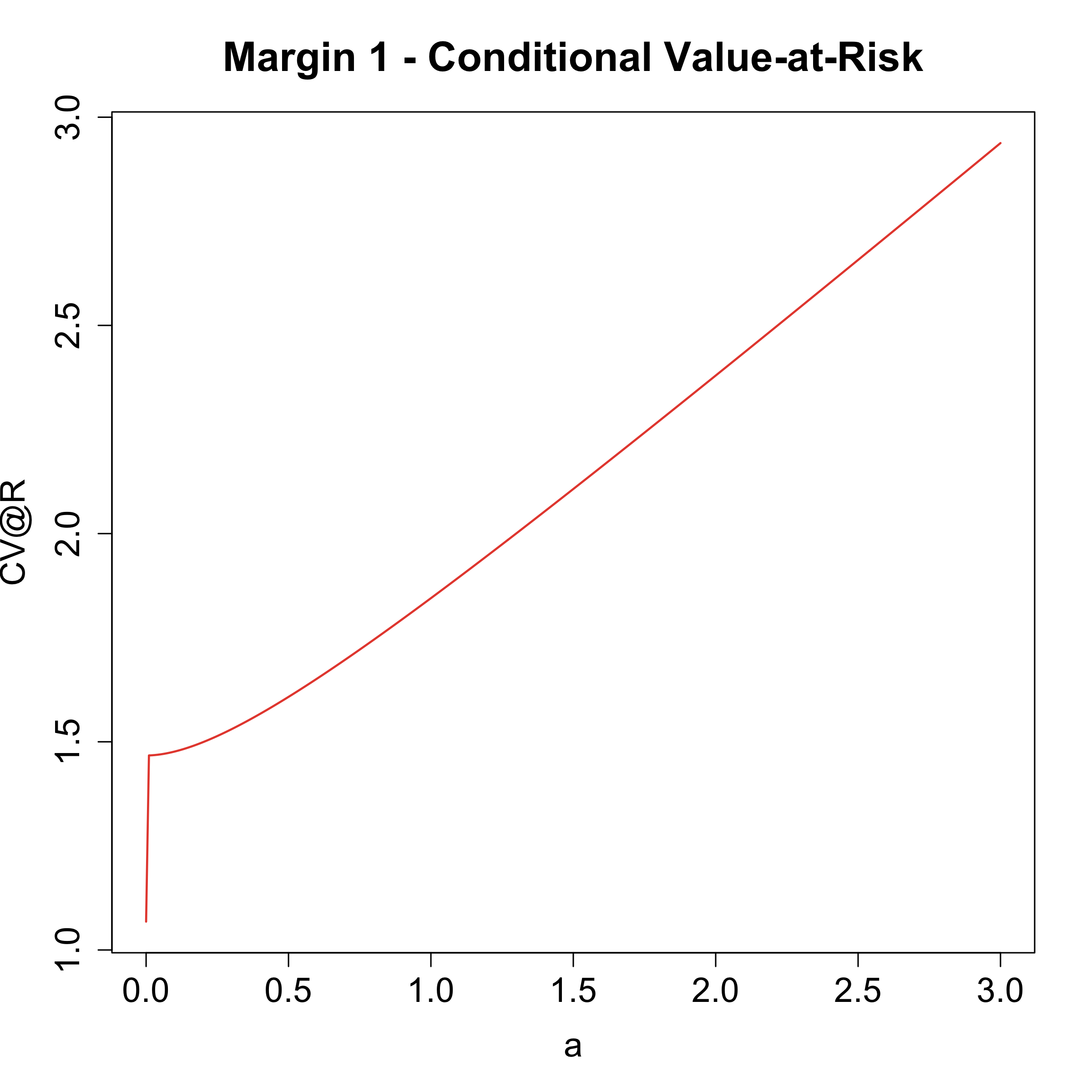}
\includegraphics[width=0.49\textwidth]{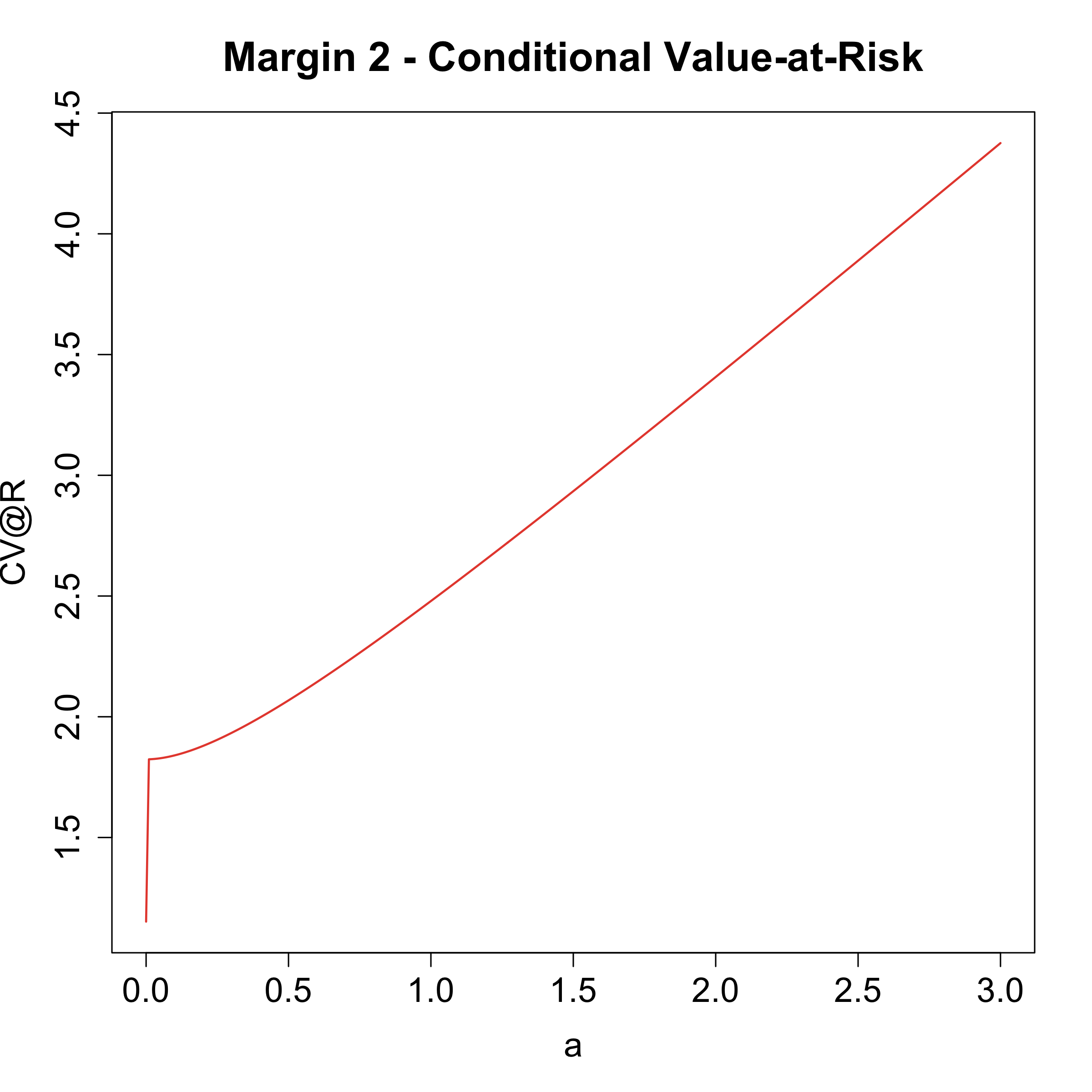}
\caption{ Conditional Value-at-Risk for the Danish fire insurance CSPH model. CV@R for the marginal distributions of building (left) and content (right) as a function of the threshold $a$ affecting the common-shock component $\tau_{1,2}$. The sharp jumps immediately after $a = 0$ reflect the atom at zero in the distribution of the common shock, demonstrating how the latent common-shock structure influences conditional risk measures.}
\label{fig:cvar_danish}
\end{figure}

\begin{figure}[!htbp]
\centering
\includegraphics[width=0.49\textwidth]{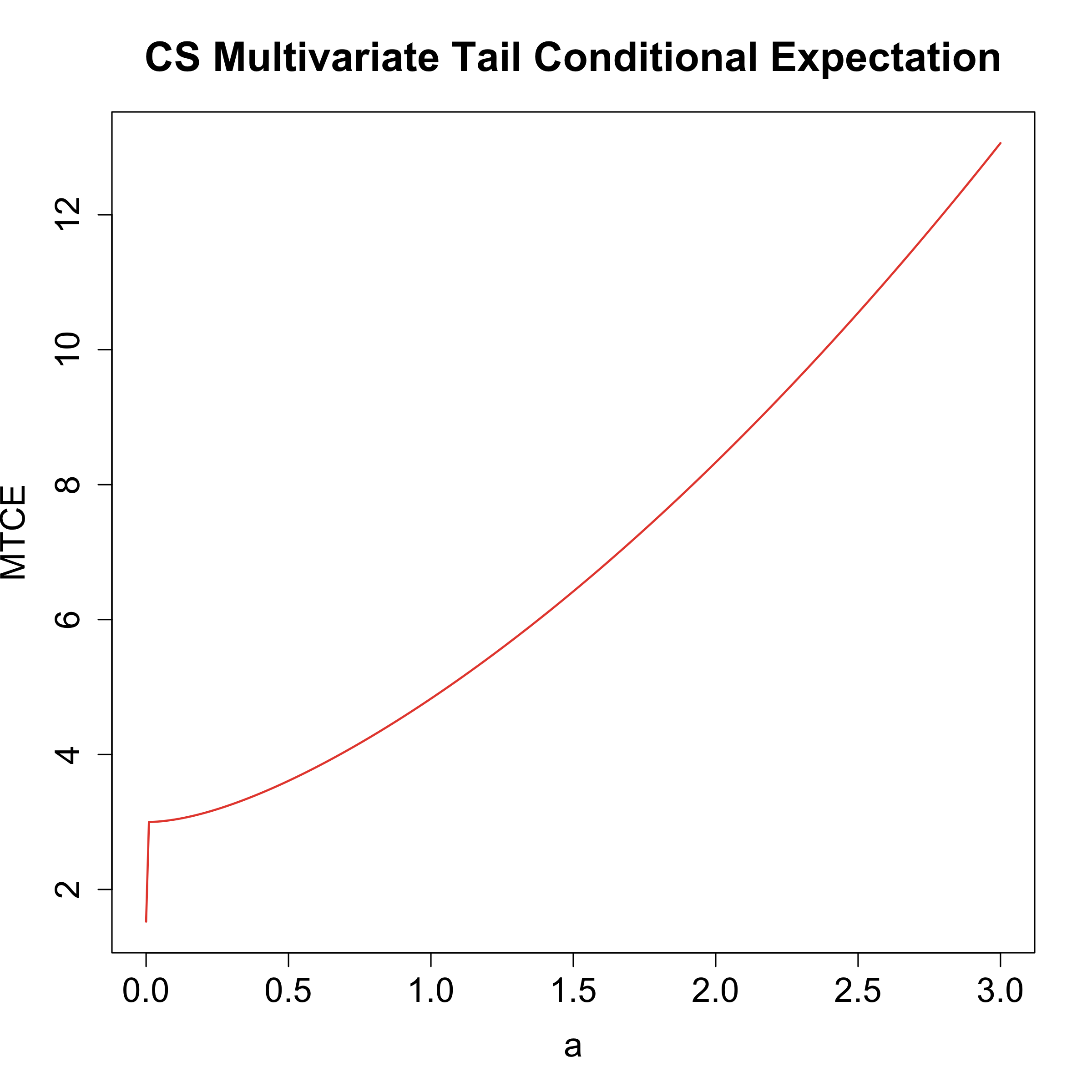}
\includegraphics[width=0.49\textwidth]{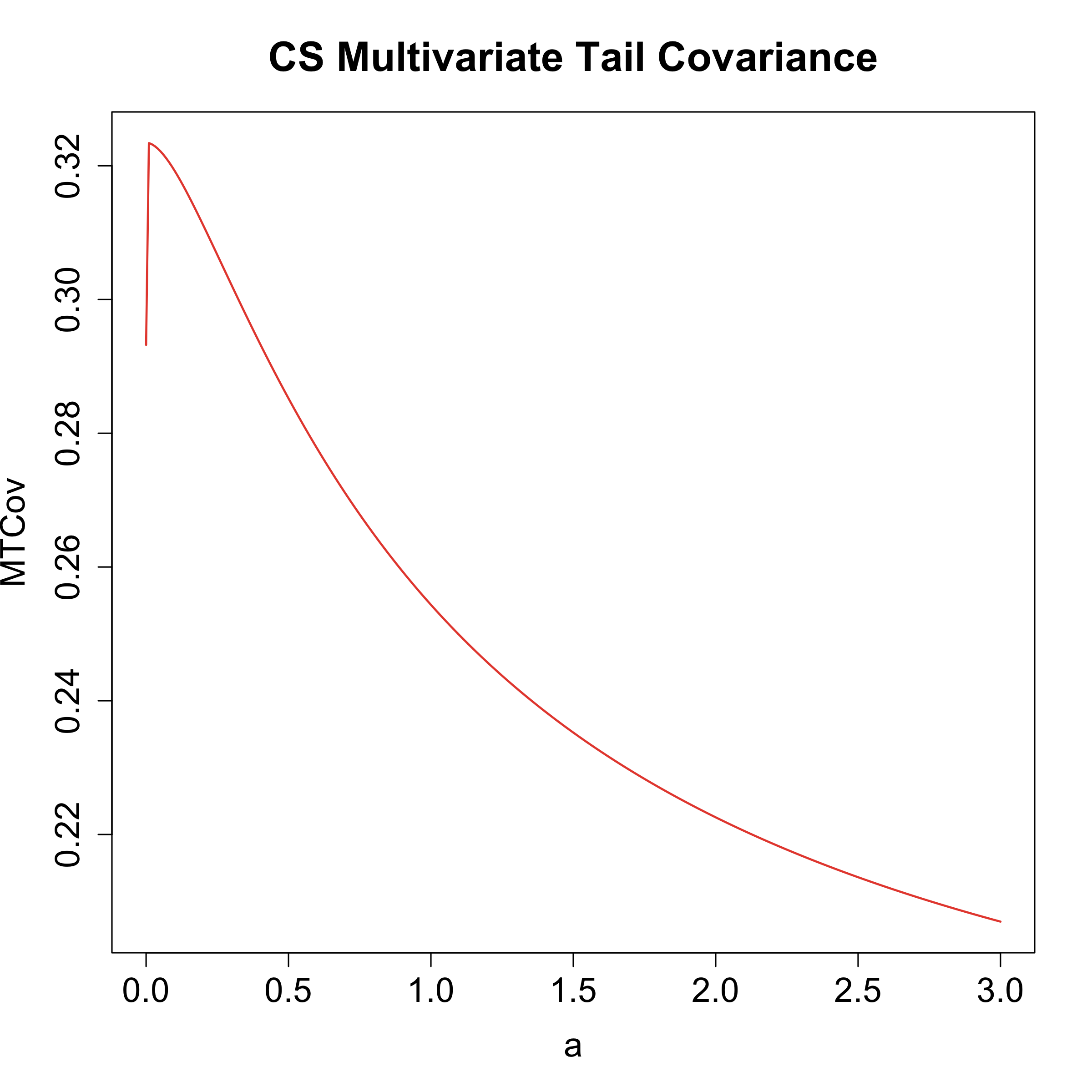}
\caption{ Conditional dependence measures for the Danish fire insurance CSPH model. Left: Mean tail conditional expectation given common shock (MTCE$^{\text{CS}}_a$) as a function of threshold $a$. Right: Mean tail conditional covariance given common shock (MTCov$^{\text{CS}}_a$) as a function of threshold $a$. Both measures exhibit sharp discontinuities after $a = 0$ due to the atom at zero in the common-shock distribution, highlighting the structural role of the common shock in driving dependence. These measures quantify how extreme common-shock events amplify the dependence between building and content claims.}
\label{fig:dan_measure}
\end{figure}

It is essential to note that fitting a $\mbox{CSPH}$ model to the log-transformed data can also be interpreted as a multivariate model for the original data with inhomogeneous $\mbox{PH}$ (IPH) margins, as introduced in \cite{albrecher2019inhomogeneous}. This connection means the fitted model induces Matrix-Pareto margins to describe the original (untransformed) data. In this setting, the tail heaviness of each marginal distribution is governed by the index of regular variation, which is determined by the real parts of the eigenvalues with the largest real part of the subintensity matrices of the marginal distributions. We obtain values of 2 and 1.85 for the building and content components, respectively. These values are in close agreement with those reported in \cite{albrecher2017reinsurance}, where a splicing approach combining a bivariate mixed Erlang distribution for the body and a bivariate Generalized Pareto distribution (GPD) for the tail yielded estimates of 2 and 1.67, respectively. A key distinction is that the splicing model in \cite{albrecher2017reinsurance} requires specifying a threshold to separate body and tail behavior, whereas the $\mbox{CSPH}$ approach achieves this transition in a fully parametric manner. We also note that a related model with matrix-Pareto margins was fitted to the same dataset in \cite{albrecher2022fitting}.

Finally, several of the risk measures discussed in this manuscript can also be computed for the multivariate matrix-Pareto model. For instance, Table~\ref{tab:var_danish} presents the $\mathrm{V@R}$ at various significance levels. Other measures, such as the $\mathrm{CV@R}$, can also be derived using the master formula~\eqref{eq:master-formula}. However, this requires a reinterpretation of the model structure, specifically, treating the common shock as acting multiplicatively on the marginal components, rather than additively.

\begin{table}[htbp]
\centering
\caption{Value-at-Risk at different levels for the CSPH model fitted to the Danish fire insurance data.}
\label{tab:var_danish}
\begin{tabular}{lccc}
\toprule
& \multicolumn{3}{c}{Level $\alpha$}   \\ \cline{2-4}
  & 0.95     & 0.975     & 0.99    \\
\midrule
Building &  13.40 & 20.64 & 35.73 \\
Content &  20.36 & 34.46 & 66.73\\
\bottomrule
\end{tabular}
\end{table}

\subsection*{Funding}
MB would like to acknowledge financial support from the Carlsberg Foundation, grant CF23-1096.

\subsection*{AI Statement}
The authors have used tools from OpenAI as a spell checker.

\subsection*{Conflict of Interest}
The authors declare no conflict of interest.

\newpage
\bibliographystyle{apalike}

\bibliography{common_shocks.bib}

@article{yuen2015optimal,
  title={Optimal proportional reinsurance with common shock dependence},
  author={Yuen, Kam Chuen and Liang, Zhibin and Zhou, Ming},
  journal={Insurance: Mathematics and Economics},
  volume={64},
  pages={1--13},
  year={2015},
  publisher={Elsevier}
}

@article{ceci2022optimal,
  title={Optimal reinsurance and investment under common shock dependence between financial and actuarial markets},
  author={Ceci, Claudia and Colaneri, Katia and Cretarola, Alessandra},
  journal={Insurance: Mathematics and Economics},
  volume={105},
  pages={252--278},
  year={2022},
  publisher={Elsevier}
}

@article{taylor2023auto,
  title={Auto-balanced common shock claim models},
  author={Taylor, Greg and Vu, Phuong Anh},
  journal={Annals of Actuarial Science},
  volume={17},
  number={3},
  pages={580--605},
  year={2023},
  publisher={Cambridge University Press}
}

@article{avanzi2021unbalanced,
  title={On unbalanced data and common shock models in stochastic loss reserving},
  author={Avanzi, Benjamin and Taylor, Greg and Vu, Phuong Anh and Wong, Bernard},
  journal={Annals of Actuarial Science},
  volume={15},
  number={1},
  pages={173--203},
  year={2021},
  publisher={Cambridge University Press}
}

@article{avanzi2018common,
  title={Common shock models for claim arrays},
  author={Avanzi, Benjamin and Taylor, Greg and Wong, Bernard},
  journal={ASTIN Bulletin: The Journal of the IAA},
  volume={48},
  number={3},
  pages={1109--1136},
  year={2018},
  publisher={Cambridge University Press}
}

@article{alai2016multivariate,
  title={Multivariate Tweedie lifetimes: The impact of dependence},
  author={Alai, Daniel H and Landsman, Zinoviy and Sherris, Michael},
  journal={Scandinavian Actuarial Journal},
  volume={2016},
  number={8},
  pages={692--712},
  year={2016},
  publisher={Taylor \& Francis}
}

@article{alai2013lifetime,
  title={Lifetime dependence modelling using a truncated multivariate gamma distribution},
  author={Alai, Daniel H and Landsman, Zinoviy and Sherris, Michael},
  journal={Insurance: Mathematics and Economics},
  volume={52},
  number={3},
  pages={542--549},
  year={2013},
  publisher={Elsevier}
}

@article{furman2010multivariate,
  title={Multivariate Tweedie distributions and some related capital-at-risk analyses},
  author={Furman, Edward and Landsman, Zinoviy},
  journal={Insurance: Mathematics and Economics},
  volume={46},
  number={2},
  pages={351--361},
  year={2010},
  publisher={Elsevier}
}

@article{meyers2007common,
  title={The common shock model for correlated insurance losses},
  author={Meyers, Glenn G},
  journal={Variance},
  volume={1},
  number={1},
  pages={40--52},
  year={2007}
}

@article{lindskog2003common,
  title={{Common Poisson shock models: applications to insurance and credit risk modelling}},
  author={Lindskog, Filip and McNeil, Alexander J},
  journal={ASTIN Bulletin: The Journal of the IAA},
  volume={33},
  number={2},
  pages={209--238},
  year={2003},
  publisher={Cambridge University Press}
}

@article{albrecher2019inhomogeneous,
  title={Inhomogeneous phase-type distributions and heavy tails},
  author={Albrecher, Hansj{\"o}rg and Bladt, Mogens},
  journal={Journal of Applied Probability},
  volume={56},
  number={4},
  pages={1044--1064},
  year={2019},
  publisher={Cambridge University Press}
}

@book{albrecher2017reinsurance,
  title={Reinsurance: actuarial and statistical aspects},
  author={Albrecher, Hansj{\"o}rg and Beirlant, Jan and Teugels, Jozef L},
  year={2017},
  publisher={John Wiley \& Sons}
}

@article{albrecher2022fitting,
  title={Fitting inhomogeneous phase-type distributions to data: the univariate and the multivariate case},
  author={Albrecher, Hansj{\"o}rg and Bladt, Mogens and Yslas, Jorge},
  journal={Scandinavian Journal of Statistics},
  volume={49},
  number={1},
  pages={44--77},
  year={2022},
  publisher={Wiley Online Library}
}

@article{follmer2011entropic,
  title={Entropic risk measures: Coherence vs. convexity, model ambiguity and robust large deviations},
  author={F{\"o}llmer, Hans and Knispel, Thomas},
  journal={Stochastics and Dynamics},
  volume={11},
  number={02n03},
  pages={333--351},
  year={2011},
  publisher={World Scientific}
}

@article{ben2007old,
  title={An old-new concept of convex risk measures: {T}he optimized certainty equivalent},
  author={Ben-Tal, Aharon and Teboulle, Marc},
  journal={Mathematical Finance},
  volume={17},
  number={3},
  pages={449--476},
  year={2007},
  publisher={Wiley Online Library}
}

@article{follmer2002convex,
  title={Convex measures of risk and trading constraints},
  author={F{\"o}llmer, Hans and Schied, Alexander},
  journal={Finance and Stochastics},
  volume={6},
  pages={429--447},
  year={2002},
  publisher={Springer}
}

@article{ArtznerMF1999,
  title={Coherent measures of risk},
  author={Artzner, Philippe and Delbaen, Freddy and Eber, Jean-Marc and Heath, David},
  journal={Mathematical Finance},
  volume={9},
  number={3},
  pages={203--228},
  year={1999},
  publisher={Wiley}
}

@article{goovaerts2010RM,
  title={Decision principles derived from risk measures},
  author={Goovaerts, Marc J and Kaas, R and Laeven, Roger J A},
  journal={Insurance: Mathematics and Economics},
  volume={47},
  number={3},
  pages={294--302},
  year={2010},
  publisher={Elsevier}
}

@article{Cousin2014MultiCTE,
  title={On multivariate extensions of Conditional-Tail-Expectation},
  author={Cousin, Areski and Di Bernardino, Elena},
  journal={Insurance: Mathematics and Economics},
  volume={55},
  pages={272--282},
  year={2014},
  publisher={Elsevier}
}

@article{Cossette2016MultiTVaR,
  title={Vector-valued Tail Value-at-Risk and capital allocation},
  author={Cossette, H\'el\`ene and Mailhot, M\'elina and Marceau, \'Etienne and Mesfioui, Mhamed},
  journal={Methodology and Computing in Applied Probability},
  volume={18},
  number={3},
  pages={653--674},
  year={2016},
  publisher={Springer}
}

@article{landsman2016multivariate,
  title={Multivariate tail conditional expectation for elliptical distributions},
  author={Landsman, Zinoviy and Makov, Udi and Shushi, Tomer},
  journal={Insurance: Mathematics and Economics},
  volume={70},
  pages={216--223},
  year={2016},
  publisher={Elsevier}
}

@article{zhu2012asymptotic,
  title={Asymptotic analysis of multivariate tail conditional expectations},
  author={Zhu, Li and Li, Haijun},
  journal={North American Actuarial Journal},
  volume={16},
  number={3},
  pages={350--363},
  year={2012},
  publisher={Taylor \& Francis}
}

@article{landsman2018multivariate,
  title={A multivariate tail covariance measure for elliptical distributions},
  author={Landsman, Zinoviy and Makov, Udi and Shushi, Tomer},
  journal={Insurance: Mathematics and Economics},
  volume={81},
  pages={27--35},
  year={2018},
  publisher={Elsevier}
}

@article{bladt2023robust,
  title={Robust claim frequency modeling through phase-type mixture-of-experts regression},
  author={Bladt, Martin and Yslas, Jorge},
  journal={Insurance: Mathematics and Economics},
  volume={111},
  pages={1--22},
  year={2023},
  publisher={Elsevier}
}

@article{bladt2023tractable,
  title={A tractable class of multivariate phase-type distributions for loss modeling},
  author={Bladt, Martin},
  journal={North American Actuarial Journal},
  volume={27},
  number={4},
  pages={710--730},
  year={2023},
  publisher={Taylor \& Francis}
}

@article{cheung2022multivariate,
  title={Multivariate matrix-exponential affine mixtures and their applications in risk theory},
  author={Cheung, Eric C K and Peralta, Oscar and Woo, Jae-Kyung},
  journal={Insurance: Mathematics and Economics},
  volume={106},
  pages={364--389},
  year={2022},
  publisher={Elsevier}
}

@article{assaf1984multivariate,
  title={Multivariate phase-type distributions},
  author={Assaf, David and Langberg, Naftali A and Savits, Thomas H and Shaked, Moshe},
  journal={Operations Research},
  volume={32},
  number={3},
  pages={688--702},
  year={1984},
  publisher={INFORMS}
}

@article{kulkarni1989new,
  title={A new class of multivariate phase type distributions},
  author={Kulkarni, Vidyadhar G},
  journal={Operations Research},
  volume={37},
  number={1},
  pages={151--158},
  year={1989},
  publisher={INFORMS}
}

@book{bladt2017matrix,
  title={Matrix-{E}xponential {D}istributions in {A}pplied {P}robability},
  author={Bladt, Mogens and Nielsen, Bo Friis},
  year={2017},
  publisher={Springer}
}

@article{ALBRECHER202268,
title = {Mortality modeling and regression with matrix distributions},
journal = {Insurance: Mathematics and Economics},
volume = {107},
pages = {68-87},
year = {2022},
issn = {0167-6687},
doi = {10.1016/j.insmatheco.2022.08.001},
author = {Hansjörg Albrecher and Martin Bladt and Mogens Bladt and Jorge Yslas}
}

@article{Ahn2012logPH,
  title={A new class of models for heavy tailed distributions in finance and insurance risk},
  author={Ahn, S and Kim, J H T and Ramaswami, V},
  journal={Insurance: Mathematics and Economics},
  volume={51},
  number={1},
  pages={43--52},
  year={2012}
}

@article{bladt2023PHMOEseverity,
  title={Phase-type mixture-of-experts regression for loss severities},
  author={Bladt, M and Yslas, J},
  journal={Scandinavian Actuarial Journal},
  volume={2023},
  number={4},
  pages={303--329},
  year={2023},
}

@article{Albrecher2022multiruinLaguerre,
  title={A bivariate {L}aguerre expansions approach for joint ruin probabilities in a two-dimensional insurance risk process},
  author={Albrecher, H and Cheung, E C K and Liu, H and Woo, J-K},
  journal={Insurance: Mathematics and Economics},
  volume={103},
  pages={96--118},
  year={2022},
}

@article{Badila2015multiruin,
  title={Two parallel insurance lines with simultaneous arrivals and risks correlated with inter-arrival times},
  author={Badila, E S and Boxma, O J and Resing, J A C},
  journal={Insurance: Mathematics and Economics},
  volume={61},
  pages={48--61},
  year={2015},
}

@article{Frees1998copula,
  title={Understanding relationships using copulas},
  author={Frees, E W and Valdez, E A},
  journal={North American Actuarial Journal},
  volume={2},
  number={1},
  pages={1--25},
  year={1998},
}

@article{willmot2015MixEr,
  title={On some properties of a class of multivariate {E}rlang mixtures with insurance applications},
  author={Willmot, G E and Woo, J-K},
  journal={ASTIN Bulletin},
  volume={45},
  number={1},
  pages={151--173},
  year={2015},
}

@article{lee2012MixEr,
  title={Modeling dependent risks with multivariate {E}rlang mixtures},
  author={Lee, S and Lin, X S},
  journal={ASTIN Bulletin},
  volume={42},
  number={1},
  pages={153--180},
  year={2012}
}

@article{Pechon2018MultiClaimCount,
  title={Multivariate modelling of household claim frequencies in motor third-party liability insurance},
  author={Pechon, F and Trufin, J and Denuit, M},
  journal={ASTIN Bulletin},
  volume={48},
  number={3},
  pages={969 --993},
  year={2018}
}

@article{Fung2019MOEcorrelatedcounts,
  title={A class of mixture of experts models for general insurance: {A}pplication to correlated claim frequencies},
  author={Fung, T C and Badescu, A L and Lin, X S},
  journal={ASTIN Bulletin},
  volume={49},
  number={3},
  pages={647--688},
  year={2019}
}

@article{Shi2014multiBNcounts,
  title={Multivariate negative binomial models for insurance claim counts},
  author={Shi, P and Valdez, E A},
  journal={Insurance: Mathematics and Economics},
  volume={55},
  pages={18--29},
  year={2014},
}

@article{neuts1981use,
  title={On the use of phase type distributions in reliability modelling of systems with two components},
  author={Neuts, Marcel F and Meier, Kathleen S},
  journal={Operations-Research-Spektrum},
  volume={2},
  number={4},
  pages={227--234},
  year={1981},
  publisher={Springer}
}

@article{Bladt2025,
title={Modeling discrete common-shock risks through matrix distributions}, 
DOI={10.1017/asb.2025.10062}, 
journal={ASTIN Bulletin}, 
author={Bladt, Martin and Cheung, Eric C. K. and Peralta, Oscar and Woo, Jae-Kyung}, 
year={2025}, 
note={To appear}}

@article{asmussen1996fitting,
  title={{Fitting phase-type distributions via the EM algorithm}},
  author={Asmussen, S{\o}ren and Nerman, Olle and Olsson, Marita},
  journal={Scandinavian Journal of Statistics},
  volume={23},
  number={4},
  pages={419--441},
  year={1996},
  publisher={JSTOR}
}

@book{Asmussen2023,
title = "Applied Probability and Queues",
author = "S. Asmussen",
year = "2003",
language = "English",
series = "Stochastic Modelling and Applied Probability",
publisher = "Springer",
address = "Netherlands",
edition = "2.",
}

@article{van1978computing,
  title={Computing integrals involving the matrix exponential},
  author={Van Loan, Charles},
  journal={IEEE transactions on automatic control},
  volume={23},
  number={3},
  pages={395--404},
  year={1978},
  publisher={IEEE}
}

\end{document}